\newcommand\br[1]{\langle{}#1\rangle{}}
\newcommand\B{\mathrm{B}}
\newcommand\cC{\mathcal{C}}
\newcommand\CH{\mathrm{CH}}
\newcommand\ch{\mathrm{c}}
\newcommand\cS{\mathcal{S}}
\newcommand\ct{\mathrm{ct}}
\newcommand\defeq{\mathrel{\vcenter{\baselineskip0.5ex \lineskiplimit0pt \hbox{.}\hbox{.}}} =}
\newcommand\dto{\dashrightarrow}
\newcommand\F{\mathbb{F}}
\newcommand\GL{\mathrm{GL}}
\newcommand\Gm{\mathbb{G}_{\mathrm{m}}}
\newcommand\hto{\hookrightarrow}
\newcommand\id{\mathrm{id}}
\newcommand\im{\operatorname{im}}
\newcommand\I{^{-1}}
\newcommand\lin{N}
\newcommand\Mb{\overline{\M}}
\newcommand\mul{\mathrm{mul}}
\newcommand\M{\mathcal{M}}
\newcommand\ol[1]{\overline{#1}}
\newcommand\oo{\mathcal{O}}
\newcommand\pbig[1]{\big({#1}\big)}
\newcommand\pBig[1]{\Big({#1}\Big)}
\newcommand\pb{\ar[rd, phantom, "\lrcorner", pos=0]}
\renewcommand\phi{\varphi}
\newcommand\Pj{\mathbb{P}}
\newcommand\pt{\mathrm{pt}}
\newcommand\Q{\mathbb{Q}}
\newcommand\set[2][]{#1\{{#2}#1\}}
\newcommand\sm{\setminus}
\newcommand\sqbig[1]{\big[{#1}\big]}
\newcommand\sqBig[1]{\Big[{#1}\Big]}
\newcommand\sq{\mathrm{sq}}
\newcommand\Sym{\mathrm{Sym}}
\newcommand\toi{\xrightarrow{_\sim}}
\newcommand\tox[2][]{\xrightarrow[#1]{#2}}
\newcommand\Vect{\mathbf{Vect}}
\newcommand\wh[1]{\widehat{#1}}
\newcommand\wt[1]{\widetilde{#1}}
\newcommand\xot[2][]{\xleftarrow[#1]{#2}}
\newcommand\Z{\mathbb{Z}}
\theoremstyle{definition}
\newtheorem{defn}{Definition}[section]
\theoremstyle{remark}
\theoremstyle{plain}
\newtheorem{thm}[defn]{Theorem}
\newtheorem{lem}[defn]{Lemma}
\newtheorem{propn}[defn]{Proposition}
\let\oldproofname=\proofname
\renewcommand{\proofname}{\rm\bf{\oldproofname}}
\title{\boldmath The integral Chow ring of $\M_2^{\ct}$}
\author{Joseph Helfer, Eric Jovinelly, Eric Larson, Anda Tenie, Chengxi Wang}
\date{December 2024}
\begin{document}
\begin{abstract}
    This paper computes the integral Chow ring of the moduli space $\M_2^\ct$ of stable genus~\(2\) curves of compact type.  This is done by excising boundary strata from $\Mb_2$ one-by-one.  During this process, we determine the Chow rings of all other open strata in $\Mb_2$ with $\Z[\frac{1}{2}]$-coefficients.
\end{abstract}

\maketitle

\section{Introduction}
Chow rings of moduli spaces of stable curves were first investigated by Mumford \cite{mumford1983towards}, who determined \(\CH^*(\Mb_2)\) with rational coefficients and defined certain natural classes in \(\CH^*(\Mb_{g, n})\) known as tautological classes.
At the time, these Chow rings could only be defined with rational coefficients; however, Totaro, Edidin, Graham, and Kresch later developed a theory of integral Chow rings that applies to $\M_g$ and $\Mb_g$ \cite{totaro-BG-chow, edidin1998equivariant, kresch}.

Since Mumford's original work, many rational Chow rings of moduli spaces of curves have been computed: Faber computed those of $\M_3$, $\Mb_3$, and $\M_4$ \cite{C.Faber1990g3,C.Faber1990g4}; Izadi computed that of $\M_5$ \cite{izadi-chow-of-m5}, Penev--Vakil computed that of $\M_6$ \cite{penev-vakil-chow-m6}; and
H.\ Larson--Canning computed that of $\M_g$ for $g \le 9$ as well as those of $\Mb_g$ for $g \le 7$ \cite{canning-larson-chow-m789,canning-larson-chow-and-cohomology-of-mg-arxiv}.  Additionally, many integral Chow rings have been computed:
\begin{itemize}
    \item $\CH^*(\Mb_{0,n})$ for all $n$ was computed by Keel \cite{keel1992intersection};
    \item $\CH^*(\M_{1,n})$ for \(n \leq 10\) was computed by Edidin--Graham and Bishop \cite{edidin1998equivariant,bishop-integral-chow-m-3-to-10};
    \item $\CH^*(\Mb_{1,n})$ for \(n \leq 4\) was computed by Edidin--Graham, Di Lorenzo--Vistoli, and Bishop \cite{edidin1998equivariant,bishop-integral-chow-m-1-3,battistella-di-lorenzo-m-1-n};
    \item $\CH^*(\M_{2,n})$ for $n \le 2$ has been computed by Vistoli, Pernice and Landi, and Landi, respectively \cite{vistoli-chow-ring-of-m2,pernice2023almost,landi-chow-hyperelliptic-arxiv}
    \item $\CH^*(\Mb_{2, n})$ for $n = 0$ and $n = 1$ was computed by E.\ Larson and Di~Lorenzo--Pernice-Vistoli, respectively \cite{larson-chow-m2, di2021stable}.
\end{itemize}

A stable curve is of \emph{compact type} if its Jacobian is proper. In this paper, we compute the integral Chow ring of the moduli stack $\M_2^\ct$ of stable genus~\(2\) curves of compact type. This stack is of particular interest because of its close relationship to abelian surfaces: the Torelli map $J \colon \M_2^\ct \to \mathcal{A}_2$ is a bijection on closed points.

\begin{thm}
Over a field of characteristic \(\neq 2, 3\), we have
\[
  \CH^*(\M_2^\ct) \cong
  \Z[\lambda_1, \lambda_2, \delta_1] / ({
    24\lambda_1^2 - 48\lambda_2,
    24 \delta_1 \lambda_2,
    168 \lambda_1 \lambda_2^2,
    10 \lambda_1 - 2 \delta_1,
    2 \lambda_1^2 - 24 \lambda_2 + \delta_1 \lambda_1 - \delta_1^2
  }).
\]
\end{thm}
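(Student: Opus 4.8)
The plan is to compute the Chow ring of $\M_2^\ct$ by the excision strategy announced in the abstract: starting from the known integral Chow ring $\CH^*(\Mb_2)$ of E.~Larson \cite{larson-chow-m2}, I would peel off the boundary strata of $\Mb_2$ one at a time. The space $\M_2^\ct$ sits between $\M_2$ and $\Mb_2$: its complement in $\Mb_2$ is the closure $\Delta_0$ of the locus of irreducible nodal curves (a nonseparating node destroys compactness of the Jacobian), while the locus $\Delta_1$ of curves with a separating node—two elliptic curves glued at a point—is the boundary divisor of $\M_2^\ct$ that remains. So the first task is to set up the right sequence of open substacks interpolating between $\Mb_2$ and $\M_2^\ct$, identify each successive closed stratum being removed, and understand these strata as quotient stacks whose equivariant Chow rings are computable.

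Concretely, I would use the excision (localization) exact sequence: for a closed substack $Z \hto X$ with open complement $U = X \sm Z$ of codimension $c$, there is a right-exact sequence $\CH^{*-c}(Z) \to \CH^*(X) \to \CH^*(U) \to 0$, where the first map is proper pushforward. Applied iteratively along the stratification of $\Delta_0$ by the topological type of the curve (one node, two nodes, theta graph, etc.), this expresses $\CH^*(\M_2^\ct)$ as the quotient of $\CH^*(\Mb_2)$ by the image of the pushforwards from all the removed strata. The key computational inputs are: (i) a presentation of each stratum's Chow ring—these strata are built from $\M_{1,n}$'s and $\M_{0,n}$'s glued along marked points, with automorphisms giving extra $B\Z/2$-type factors, so their equivariant Chow rings are accessible; and (ii) explicit formulas for the pushforward maps, i.e.\ the classes of the boundary gluing morphisms and the self-intersection/excess-intersection contributions coming from the normal bundles of the strata. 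Throughout, the generators $\lambda_1, \lambda_2, \delta_1$ must be tracked: $\lambda_i$ are Chern classes of the Hodge bundle and $\delta_1$ is the class of $\Delta_1$, and I would verify that the relations in the target ring arise precisely as images of fundamental classes of the excised loci pushed into $\CH^*(\Mb_2)$.

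A subtlety the abstract flags is that the intermediate computations are carried out with $\Z[\tfrac12]$-coefficients, so to obtain the \emph{integral} answer I would need to separately control the $2$-primary torsion. This means running a parallel analysis at the prime $2$: either by a direct equivariant computation on the strata whose automorphism groups have order divisible by $2$ (the hyperelliptic involution, the $\Z/2$ swapping the two elliptic tails in a $\Delta_1$-curve, etc.), or by comparing with the $\mathcal{A}_2$-side via the Torelli map. Reconciling the $\Z[\tfrac12]$-computation with the $2$-local information to pin down the integral ring—and verifying there is no hidden $2$-torsion beyond what the stated relations encode—is where I expect the real work to lie.

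The main obstacle, I expect, is the bookkeeping of the pushforward maps and the excess-intersection terms: proper pushforward along the gluing maps is not a ring homomorphism, so each relation in the final presentation must be traced back through a normal-bundle (Gysin) computation, and the five stated relations—especially the mixed relations like $2\lambda_1^2 - 24\lambda_2 + \delta_1\lambda_1 - \delta_1^2$ and the high-torsion relation $168\lambda_1\lambda_2^2$—suggest delicate cancellations among contributions from several strata. Getting the equivariant Chern class calculations on each stratum correct, and confirming that the images assemble into exactly the stated ideal (neither too large nor too small), is the crux. A useful sanity check at the end would be to tensor with $\Q$ and compare against Mumford's rational computation, and to restrict to the open stratum $\M_2$ to recover Vistoli's ring \cite{vistoli-chow-ring-of-m2}.
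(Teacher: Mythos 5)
Your overall skeleton --- start from the integral $\CH^*(\Mb_2)$ of \cite{larson-chow-m2} and excise the strata of $\Delta_0$ one topological type at a time via the right-exact localization sequence --- is indeed the paper's strategy, and your identification of $\M_2^\ct = \Mb_2 \sm \Delta_0$ and of the generators to track is correct. But the step you designate as the main work (``explicit formulas for the pushforward maps, i.e.\ the classes of the boundary gluing morphisms and the self-intersection/excess-intersection contributions coming from the normal bundles of the strata'') is precisely the step that cannot be carried out as described, and your proposal contains no substitute for it. The known presentation of $\Mb_2$ is patched together from two quotient stacks, one for $\Delta_1$ and one for its complement $\Mb_2 \sm \Delta_1 \subset V_6(2)/\GL_2$. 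Consequently the strata $\Delta_{000}$, $\Delta_{00}\sm(\Delta_{000}\cup\Delta_{001})$, $\Delta_0\sm(\Delta_{00}\cup\Delta_{01})$ and their pushforwards are directly computable only into $\CH^*(\Mb_2\sm\Delta_1)$ (this is Vistoli's computation, which you relegate to a sanity check but which is in fact a load-bearing input), whereas the excision you need requires pushforwards into $\CH^*\pbig{\Mb_2\sm(\Delta_{0^{r+1}}\cup\Delta_{01})}$, which still contains most of $\Delta_1$. No global quotient presentation of $\Mb_2\sm\Delta_{01}$ compatible with these strata is available, so there is no Gysin/normal-bundle computation to run; the pushforward classes are determined by the $\Mb_2\sm\Delta_1$ data only up to the kernel of restriction, i.e.\ up to $\delta_1$-multiples, and resolving exactly that ambiguity is the entire difficulty.

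The tools the paper introduces for this are absent from your proposal. The pushforwards of the module generators of each stratum's Chow ring are named classes $\beta_{ri}$ and are pinned down by intersecting the constraints that they (a) restrict to Vistoli's classes $\alpha_{ri}$ on $\Mb_2\sm\Delta_1$, (b) restrict to zero on $\Delta_1\sm\Delta_{0^r1}$, and (c) pull back to zero along a test family $B$ of bielliptic curves, realized as an open substack of $\sqbig{V_3(1)\times\pbig{V_1(-1)\boxtimes W_{-2}}}/(\GL_2\times G)$, whose Chow ring and excisions must themselves be computed (this occupies a full section of the paper); the remaining integer coefficients are fixed using Mumford's rational intersection numbers. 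Even then one coefficient in $\beta_{11}$ survives all of these constraints and is determined by a separate argument: a Grothendieck--Riemann--Roch-type identity $\pi_*\pbig{\ch_1(\omega_\pi)\cdot[\cS_1]}=12\lambda_1^2-24\lambda_2$ for the universal separating node $\cS_1$, compared against the pushforward of $\gamma$ from $\Delta_1\sm\Delta_{01}$, which yields the relation $2\lambda_1^2-24\lambda_2+\delta_1\lambda_1-\delta_1^2=0$ in $\CH^*(\M_2^\ct)$ --- the very relation you singled out as requiring ``delicate cancellations.'' Finally, your reading of the $\Z[\frac12]$ caveat inverts the paper's logic: the theorem for $\M_2^\ct$ is proved integrally with no separate $2$-primary analysis; it is the \emph{other} intermediate open strata whose integral rings retain unresolved ($2$-torsion) ambiguities $w_{21},w_{32}$, which happen not to propagate to $\M_2^\ct$.
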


Along the way, we also compute the Chow rings with $\Z[1/2]$-coefficients of all other open strata of $\Mb_2$ -- see Theorem~\ref{thm:main-thm} for a precise statement.

\subsection{The Boundary Stratification}
Let us recall the boundary stratification of $\Mb_2$.
To each stable curve $C$ we associate its \textit{dual graph}, whose vertices represent the irreducible components of the curve and whose edges represent the nodes.
In Figure~\ref{fig:m2-boundary-strat}, the names of the strata are displayed to the left, and the dual graphs of the curves they parametrize on the right.
Our convention is that these names always refer to the \emph{closed} strata, i.e., each node in the diagram includes all the ones below it.
\begin{figure}
  \newsavebox\mt\setbox\mt=\hbox{\begin{tikzcd}[dgr]2\end{tikzcd}}
  \newsavebox\dz\setbox\dz=\hbox{\begin{tikzcd}[dgr]1\ar[loop right, -]\end{tikzcd}}
  \newsavebox\di\setbox\di=\hbox{\begin{tikzcd}[dgr]1\ar[r, -]&1\end{tikzcd}}
  \newsavebox\dzz\setbox\dzz=\hbox{\begin{tikzcd}[dgr]0\ar[loop right, -]\ar[loop left,-]\end{tikzcd}}
  \newsavebox\dzzz\setbox\dzzz=\hbox{\begin{tikzcd}[dgr]0\ar[r, -, shift right, shorten=-0.5pt]\ar[r, -, shift left, shorten=-0.5pt]\ar[r, -]&0\end{tikzcd}}
  \newsavebox\dzi\setbox\dzi=\hbox{\begin{tikzcd}[dgr]0\ar[loop left, -]\ar[r, -]&1\end{tikzcd}}
  \newsavebox\dzzi\setbox\dzzi=\hbox{\begin{tikzcd}[dgr]0\ar[loop left, -]\ar[r, -]&0\ar[loop right, -]\end{tikzcd}}
  \[
    \begin{tikzcd}[column sep=10pt, row sep=29pt]
      &\Mb_2\ar[dl, -]\ar[dr, -]\\
      \Delta_0\ar[d, -]\ar[drr, -]&&\Delta_1\ar[d, -]\\
      \Delta_{00}\ar[d, -]\ar[drr, -]&&\Delta_{01}\ar[d, -]\\
      \Delta_{000}&&\Delta_{001}
    \end{tikzcd}
    \quad\quad
    \begin{tikzcd}[column sep=0pt]
      &\usebox{\mt}\ar[dl, -]\ar[dr, -]\\
      \usebox{\dz}\ar[d, -]\ar[drr, -]&&\usebox{\di}\ar[d, -]\\
      \usebox{\dzz}\ar[d, -]\ar[drr, -]&&\usebox{\dzi}\ar[d, -]\\
      \usebox{\dzzz}&&\usebox{\dzzi}
    \end{tikzcd}
  \]
  \caption{The boundary stratification of $\Mb_2$.}
  \label{fig:m2-boundary-strat}
\end{figure}
Notice from the diagram that we have $\M_2^\ct = \Mb_2 \sm \Delta_0$ since the compact type curves are precisely those whose dual graph is a tree.

\subsection{Methods}\label{sec:methods}
Our approach is to use the computation of the integral Chow ring of $\Mb_2$ in \cite{larson-chow-m2} and excise the strata $\Delta_{001}$, $[\Delta_{01}\sm\Delta_{001}]$, $\Delta_{000}$, $[\Delta_{00} \sm (\Delta_{000} \cup \Delta_{001})]$, and $[\Delta_0 \sm (\Delta_{00} \cup \Delta_{01})]$ in order.
The first two of these turn out to be fairly straightforward to excise:
Using the presentation of $\Delta_1$ as a quotient of an affine space by a group action from \cite[\S3.1]{larson-chow-m2}, we find explicit descriptions of the strata $\Delta_{01}$ and $\Delta_{001}$.

Unfortunately, the remaining strata are challenging to excise, because the presentation of $\Mb_2$ in \cite{larson-chow-m2} involves patching together two quotient stacks, one for \(\Delta_1\) and one for its complement.
Thus, while it is easy to understand the pushforward maps
\begin{align*}
\CH^*(\Delta_{000}) &\to \CH^*(\Mb_2 \sm \Delta_1) \\
\CH^*(\Delta_{00} \sm (\Delta_{000} \cup \Delta_{001})) &\to \CH^*(\Mb_2 \sm (\Delta_{000} \cup \Delta_1)) \\
\CH^*(\Delta_0 \sm (\Delta_{00} \cup \Delta_{01})) &\to \CH^*(\Mb_2 \sm (\Delta_{00} \cup \Delta_1)),
\end{align*}
this does not quite determine the needed pushforward maps
\begin{align*}
\CH^*(\Delta_{000}) &\to \CH^*(\Mb_2 \sm \Delta_{01}) \\
\CH^*(\Delta_{00} \sm (\Delta_{000} \cup \Delta_{001})) &\to \CH^*(\Mb_2 \sm (\Delta_{000} \cup \Delta_{01})) \\
\CH^*(\Delta_0 \sm (\Delta_{00} \cup \Delta_{01})) &\to \CH^*(\Mb_2 \sm (\Delta_{00} \cup \Delta_{01})).
\end{align*}

To resolve the ambiguity in the needed pushforward maps, we use a combination of two techniques: First, we use the test family of bielliptic curves described in \cite[\S11]{larson-chow-m2} to determine relations via pullback. Second, we use the Grothendieck--Riemann--Roch formula to establish certain relations in \(\CH^*(\M_2^\ct)\).

\noindent
\textbf{Acknowledgements}
This project began at the 2023 AGNES Summer School, which was held at Brown University and funded by NSF grant DMS-2312088.

The authors would like to thank Dan Abramovich, Juliette Bruce, Samir Canning, Renzo Cavalieri, Melody Chan, Dawei Chen, Emily Clader, Aitor Iribar Lopez, Hannah Larson, Andrea di Lorenzo, Michele Pernice, Brandon Story, Angelo Vistoli, Isabel Vogt, Rachel Webb, Angelina Zheng, and other members of the Brown University math department, for helpful conversations about Chow rings of moduli spaces and/or for organizing the summer school.
Finally, the authors would like to acknowledge the generous support of the National Science Foundation: Eric Jovinelly was supported by an NSF postdoctoral research fellowship DMS-2303335; Eric Larson was supported by NSF grant DMS-2200641.

\tableofcontents

\section{Preliminaries}

\subsection{Assumptions and conventions}\label{subsec:conventions}
Throughout, all schemes and stacks will be over a fixed field $k$ of characteristic different from 2 or 3.  We do not assume $k$ is algebraically closed.

A \textit{global quotient stack} is the quotient $X / G$ of an algebraic space $X$ by a linear algebraic group $G$.
All quotients of spaces by group actions in this paper are stack quotients unless noted otherwise.

In this paper, a \textit{stack} is an Artin stack that admits a stratification by global quotient stacks (see \cite[Def.~3.5.3]{kresch}).
For such stacks, we let $\CH^*$ denote the Chow functor defined in \cite[Definition~2.1.11]{kresch}.
By \cite[Theorem~2.1.12]{kresch}, these Chow groups have familiar properties, such as excision (i.e.\ localization) sequences and a ring structure for smooth stacks.

Given a vector bundle $\mathcal{E} \to X$, we let $\mathbb{P}\mathcal{E}$ denote the space of lines in $\mathcal{E}$.  By \cite[Theorem~2.1.12]{kresch}, the pullback map induces isomorphisms on Chow groups:
\begin{equation}\label{eq:proj-bdl}
    \CH^*(\mathcal{E}) \cong \CH^*(X) \text{ and } \CH^*(\mathbb{P}\mathcal{E}) \cong \CH^*(X)[\zeta]/ (\zeta^r + \ch_1(\mathcal{E}) \zeta^{r-1} + \ldots + \ch_r(\mathcal{E}))
\end{equation}
where $r$ is the rank of $\mathcal{E}$ and $\ch_i(\mathcal{E})\in\CH^*(X)$ are the Chern classes of $\mathcal{E}$.
For a line bundle $\mathcal{L}$ on $X$, the pullback of $\zeta_{\mathcal{E}\otimes\mathcal{L}}$ under the twisting isomorphism $\Pj(\mathcal{E}) \to \Pj(\mathcal{E}\otimes\mathcal{L})$ is $\zeta_{\mathcal{E}}-\ch_1(\mathcal{L})$.

\subsection{Equivariant Computations}
For a smooth global quotient stack $X/G$, the ring $\CH^*(X/G)$ is naturally isomorphic to the $G$-equivariant Chow ring $\CH_G^*(X)$ of \cite{edidin1998equivariant}, and we will use these interchangeably.
Below we recall some facts about equivariant bundles.

Let $G$ be a quasi-compact group scheme acting on a scheme $X$, and consider the map to the quotient stack $\pi \colon X \to X/G$.
The pullback $\pi^*E \to X$ of any vector bundle $E \to X/G$ inherits the structure of a $G$-equivariant bundle -- i.e., a $G$-action on $\pi^*E$ by vector bundle automorphisms making the projection $\pi^*E \to X$ equivariant.
By fpqc descent, this gives an equivalence of categories
\[
  \pi^* \colon \Vect(X/G) \to \Vect_G(X).
\]

That is, we can study vector bundles on $X/G$ in terms of equivariant vector bundles on $X$.
The bundles on $X/G$ which are pulled back along $X/G \to \pt/G$ correspond to the equivariant vector bundles on $X$ of the form $X\times V\tox{\pi_1}X$ for some $G$-representation $V$, where $G$ acts simultaneously on both factors of $X \times V$.
As a special case, vector bundles on $\B G = \pt/G$ correspond to equivariant vector bundles on $\pt$, i.e., $G$-representations.

\subsection{Common groups and classes}
The following notation is consistent with \cite{larson-chow-m2}.

\begin{defn}
    Throughout this paper, let $G$ denote the wreath product
    $$G \defeq (\Gm\times\Gm)\rtimes\Z/2\Z,$$
    where the action of $\Z / 2\Z$ on $\Gm \times \Gm$ permutes its factors.
\end{defn}

One can describe representations of $G$ through the sign representation $\Gamma$ of $\Z/2\Z$ and representations of $\Gm$.
\begin{defn}
  We let $L_n$ denote the 1-dimensional representation of $\Gm$ with weight $n$.
  Set
  \[
    L_{a_1,\ldots, a_n} \defeq L_{a_1} \oplus \ldots \oplus L_{a_n}.
  \]

  Let \( W_{a_1,\ldots, a_n} \) be the representation of \( G \) whose underlying vector space is
  \[
    W_{a_1,\ldots, a_n} \defeq L_{a_1,\ldots, a_n} \oplus L_{a_1,\ldots, a_n}
  \]
  and where \( \Gm \times \Gm \subset G \) acts naturally on \( L_{a_1,\ldots, a_n} \oplus L_{a_1,\ldots, a_n} \) and \( \Z/2\Z \) permutes its factors.

Lastly, we let \( V \) denote the standard representation of \( \GL_2 \) and set \( V_n \defeq \Sym^n V^* \) as well as \( V_n(m) \defeq V_n \otimes (\det V)^{\otimes m} \).
\end{defn}

\begin{defn}\label{defn:classes}
    With the above notation, set
    \begin{align*}
        \alpha_i &= \ch_i(V) \in \CH^*(\B \GL_2),\\
        \beta_i &= \ch_i(W_1) \in \CH^*(\B G),\\
        \gamma &= \ch_1(\Gamma) \in \CH^*(\B \Z/2\Z),
    \end{align*}
    and for $i = 1,2$ let $t_i \in \CH^*\pbig{\B (\Gm \times \Gm)}$ be the pullback of $\ch_1(L_1)$ along the $i$-th projection map $\B(\Gm \times \Gm) \to \B\Gm$.
\end{defn}

Whenever we are working in the Chow ring of a space equipped with a map to \( \B GL_2 \), \( \B G \), or \( \B(\Gm \times \Gm) \), we will denote by \( \alpha_i\) \( \beta_i \), etc., the pullbacks of these classes from \( \CH^*(\B \GL_2) \), etc.
We have the following explicit presentations of these rings:
\begin{lem}[{%
    \cites[Theorem~2.13~and~Lemma~2.12]{totaro-group-cohomology-and-algebraic-cycles}%
    [Theorem~5.2~and~Lemma~7.1]{larson-chow-m2}}%
  ]\label{lem:norm-map-and-presentations}
  We have the presentations
  \[
    \CH^*(\B \GL_2) \cong \Z[\alpha_1,\alpha_2],
    \quad
    \CH^*(\B G) \cong \Z[\beta_1,\beta_2,\gamma] / (2\gamma, \gamma^2 + \beta_1 \gamma),
    \quad
    \CH^*\pbig{\B (\Gm \times \Gm)} \cong \Z[t_1,t_2].
  \]
  Moreover, the \emph{norm} (or \emph{transfer} map) from \( \Gm \times \Gm \) to \( G \) -- in other words, the pushforward \( \pi_* \colon \CH^*(\Gm \times \Gm) \to \CH^*(G) \) along the index 2 inclusion \( \Gm \times \Gm \hto G \) -- is given by
  \begin{align*}
    \pi_*(1) = & 2 &
    \pi_*(t_1^a) = & \beta_1 \pi_*(t_1^{a-1}) - \beta_2 \pi_*(t_1^{a-2}) \text{ for } a\geq 2\\
    \pi_*(t_1) = & \beta_1 + \gamma &
    \pi_*(t_1^a t_2^b) = & \beta_2^{\min(a,b)} \pi_*(t_1^{|a-b|}),
  \end{align*}
  while the pullback along \( \pi \) is given by
  \[
    \pi^*\beta_1 = t_1 + t_2,
    \qquad
    \pi^*\beta_2 = t_1 t_2,
    \qquad
    \pi^*\gamma = 0.
  \]
\end{lem}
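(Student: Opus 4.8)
The plan is to dispatch the three presentations and then read off the transfer and pullback formulas, with essentially all of the work concentrated in $\CH^*(\B G)$. For $\CH^*(\B \GL_2) \cong \Z[\alpha_1,\alpha_2]$ and $\CH^*(\B(\Gm \times \Gm)) \cong \Z[t_1, t_2]$ I would cite the standard computations of Chow rings of classifying spaces: $\CH^*(\B \Gm) \cong \Z[t]$ via the approximation of $\B \Gm$ by projective spaces, whence $\CH^*(\B(\Gm \times \Gm)) \cong \Z[t_1] \otimes_\Z \Z[t_2]$ by taking products, together with the polynomiality of $\CH^*(\B \GL_2)$ on the Chern classes $\alpha_1, \alpha_2$ of the tautological bundle via the splitting principle. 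These are routine.

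The substance is $\CH^*(\B G)$, which I would compute through the index-$2$ torus $T \defeq \Gm \times \Gm \hto G$. The induced map $\pi \colon \B T \to \B G$ is a degree-$2$ finite étale cover whose deck transformation acts on $\CH^*(\B T) = \Z[t_1,t_2]$ by the swap $t_1 \leftrightarrow t_2$. Since $\pi^* W_1$ splits as a sum of two line bundles with first Chern classes $t_1$ and $t_2$, its Chern classes are the elementary symmetric functions, so $\pi^* \beta_1 = t_1 + t_2$ and $\pi^* \beta_2 = t_1 t_2$; thus $\pi^*$ carries $\Z[\beta_1, \beta_2]$ isomorphically onto the invariants $\Z[t_1,t_2]^{\Z/2}$, while $\pi^* \gamma = 0$ because $T \hto G \to \Z/2$ is trivial. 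I would then produce the two relations representation-theoretically: $2\gamma = 0$ from $\Gamma^{\otimes 2} \cong \oo$, and $\gamma^2 + \beta_1 \gamma = 0$ from the $G$-equivariant isomorphism $W_1 \cong W_1 \otimes \Gamma$ — the matrix $\mathrm{diag}(1,-1)$ conjugates the swap action into the sign-twisted swap action — by comparing second Chern classes: $\beta_2 = \ch_2(W_1 \otimes \Gamma) = \beta_2 + \beta_1 \gamma + \gamma^2$.

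Proving that these generators and relations give a \emph{complete} presentation is the step I expect to be the main obstacle, since it requires controlling the integral (really, $2$-torsion) structure. After inverting $2$ one has $\CH^*(\B G)[\tfrac12] \cong \CH^*(\B T)[\tfrac12]^{\Z/2} = \Z[\tfrac12][\beta_1,\beta_2]$, so $\gamma$ is $2$-torsion and the $\beta_i$ generate modulo torsion; what remains is to show the torsion is exactly $\F_2[\beta_1,\beta_2]\cdot\gamma$. For this I would run the localization sequence on a finite-dimensional approximation of $\B G$, splitting off the locus where the deck involution acts freely (contributing the symmetric functions) from its fixed locus $\{t_1 = t_2\}$ (a $\B\Z/2$-type stratum responsible for $\gamma$), and check that the resulting extension is the claimed one. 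This is precisely Totaro's computation, so I would either reproduce this localization argument or cite it.

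Finally the transfer and pullback formulas follow formally. The pullbacks $\pi^* \beta_1 = t_1 + t_2$, $\pi^* \beta_2 = t_1 t_2$, $\pi^* \gamma = 0$ are the Chern-class computations above, and $\pi_*(1) = 2$ is the degree of $\pi$. The recursions come from the projection formula: extracting factors of $t_1 t_2 = \pi^* \beta_2$ gives $\pi_*(t_1^a t_2^b) = \beta_2^{\min(a,b)} \pi_*(t_1^{|a-b|})$, while $\beta_1 \pi_*(t_1^{a-1}) = \pi_*\pbig{(t_1+t_2)t_1^{a-1}} = \pi_*(t_1^a) + \beta_2 \pi_*(t_1^{a-2})$ rearranges to the stated recursion. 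The one non-formal value is $\pi_*(t_1)$: applying $\pi^*$ shows $\pi_*(t_1) - \beta_1 \in \ker \pi^*$ is $2$-torsion, so $\pi_*(t_1) = \beta_1 + c\gamma$ with $c \in \Z/2$; restricting along $\Z/2 \hto G$ — where $W_1$ becomes the regular representation so that $\beta_1 \mapsto \gamma$, while the base change of $\pi$ is the trivial cover $\pt \to \B\Z/2$ along which $t_1 \mapsto 0$ — forces $(1+c)\gamma = 0$, hence $c = 1$ and $\pi_*(t_1) = \beta_1 + \gamma$.
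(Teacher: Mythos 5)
Your proof is correct, but note that the paper itself offers no argument for this lemma: it is stated purely as a citation to \cite{totaro-group-cohomology-and-algebraic-cycles} (Theorem~2.13 and Lemma~2.12) and \cite{larson-chow-m2} (Theorem~5.2 and Lemma~7.1), which is where the actual proofs live. What you have written is in effect a reconstruction of those proofs, and it holds up. The derivation of $\gamma^2 + \beta_1\gamma = 0$ from the $G$-equivariant isomorphism $W_1 \cong W_1 \otimes \Gamma$ (your conjugation by $\mathrm{diag}(1,-1)$ is the right verification) is valid; the recursion $\pi_*(t_1^a) = \beta_1\pi_*(t_1^{a-1}) - \beta_2\pi_*(t_1^{a-2})$ and the formula $\pi_*(t_1^a t_2^b) = \beta_2^{\min(a,b)}\pi_*(t_1^{|a-b|})$ are indeed formal consequences of the projection formula once $\pi^*\beta_1 = t_1+t_2$ and $\pi^*\beta_2 = t_1t_2$ are known; and your determination of the constant in $\pi_*(t_1) = \beta_1 + c\gamma$ by restricting along $\Z/2 \hto G$ --- where $\beta_1 \mapsto \gamma$, $\beta_2 \mapsto 0$, and the cover base-changes to $\pt \to \B\Z/2$ --- is a legitimate Mackey-type base-change argument; the paper itself uses exactly this compatibility \eqref{eq:pb-square-ch-commutes} and exactly this restriction in the proof of Lemma~\ref{lem:class-of-graph}. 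You also correctly isolate the one non-formal ingredient, namely completeness of the presentation $\Z[\beta_1,\beta_2,\gamma]/(2\gamma,\gamma^2+\beta_1\gamma)$ of $\CH^*(\B G)$ (equivalently, that the torsion is exactly $\F_2[\beta_1,\beta_2]\cdot\gamma$), and defer it to Totaro's localization computation. That deferral is exactly the dependence the paper's citation encodes, so the two routes agree on where the real difficulty sits; your version buys a nearly self-contained account of the relations and transfer formulas, while the paper's bare citation is the economical choice for results already in the literature, and neither is independent of Totaro for the completeness statement.
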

Note that the image of \( \pi_* \colon \CH^*(\B G) \to \CH^*\pbig{\B (\Gm \times \Gm}) \cong \Z[t_1,t_2] \) is the subring generated by \( t_1 + t_2 \) and \( t_1 t_2 \), and \( \Z[t_1,t_2] \) is generated as a module over this ring by \( 1 \) and \( t_1 \) (or by \( 1 \) and \( t_2 \)).
In particular (by \S\ref{subsec:push-pull} below), pushing forward an ideal under \( \pi_* \) amounts to pushing forward a set of generators of this ideal and their products with \( t_1 \) (or with \( t_2 \)).

\subsection{Push pull}\label{subsec:push-pull}
We recall the ``push-pull'' property for Chow rings: this says that, given a proper morphism $f \colon X\to Y$ of stacks with \( X \) and \( Y \) smooth, $f_*\colon\CH^*(X)\to\CH^*(Y)$ is a $\CH^*(Y)$-module homomorphism.
Hence, more generally, given a second map $g\colon Y\to Z$ (with $Z$ smooth), $f_*$ is a $\CH^*(Z)$-module homomorphism.

We will be concerned with finding the images of several such pushforward morphisms $f_*$.
To this end, we note that if $S \subset \CH^*(X)$ generate $\CH^*(X)$ as a $\CH^*(Z)$-module, then $f_*(S)$ generates the image of $f_*$ as an $\CH^*(Z)$-module.
Thus, in particular, if $(gf)^* \colon \CH^*(Z) \to \CH^*(X)$ is surjective, then the image of $f_*$ is generated as an ideal by the image $f_*(1)$ of the fundamental class of $X$.

This situation will arise for us with $Z = \B H$ for some group $H$, and $X$ and $Y$ open subsets of vector bundles over $Z$.

A related fact we will be using repeatedly is that ``flat pullback commutes with projective pushforward'' \cite[\S2.2]{kresch}: given a pullback square of stacks as below on the left with the horizontal morphisms projective and the vertical morphisms flat, the resulting square of Chow rings on the right is commutative.
\begin{equation}\label{eq:pb-square-ch-commutes}
  \begin{tikzcd}
    X' \ar[r, "f'"] \ar[d, "h'"'] \pb & X \ar[d, "h"] \\
    Y' \ar[r, "f"] & Y
  \end{tikzcd}
  \qquad
  \qquad
  \begin{tikzcd}
    \CH^*(X') \ar[r, "f'_*"]
    \ar[from=d, "(h')^*"] \pb &
    \CH^*(X) \ar[from=d, "h^*"'] \\
    \CH^*(Y') \ar[r, "f_*"] & \CH^*(Y)
  \end{tikzcd}
\end{equation}

\subsection{Chow rings of \texorpdfstring{\boldmath $\Mb_2$}{M₂} and \texorpdfstring{\boldmath $\Delta_1$}{Δ₁}}\label{subsec:d1-pres}
We recall the main result of \cite{larson-chow-m2}, which is the starting point of our computations.

\begin{thm}[Theorem 1.1, \cite{larson-chow-m2}]\label{thm:chow-M2bar}
Over any base field of characteristic distinct from 2 and 3, the Chow ring of the moduli space of stable curves of genus 2 is given by
\[
  \CH^*(\Mb_2)\cong
  \Z[\lambda_1, \lambda_2, \delta_1 ] / (
  24\lambda_1^2 - 48\lambda_2,
  20\lambda_1\lambda_2 - 4\delta_1\lambda_2,
  \delta_1^3 + \delta_1^2 \lambda_1,
  2\delta_1^2 + 2\delta_1 \lambda_1
  ),
\]
where $\lambda_1,\lambda_2$ denote the Chern classes of the Hodge line bundle and $\delta_1$ denotes the class of the boundary substack parameterizing curves with a disconnecting node.

In this basis, the class of the boundary substack parameterizing curves with a nonseparating node is given by $\delta_0 = 10\lambda_1 - 2\delta_1$.
\end{thm}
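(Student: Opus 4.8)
The plan is to compute $\CH^*(\Mb_2)$ by excising the separating-node boundary $\Delta_1$ and gluing the result to the Chow ring of its complement through the localization sequence
\[
  \CH^*(\Delta_1) \tox{\iota_*} \CH^*(\Mb_2) \to \CH^*(\Mb_2 \sm \Delta_1) \to 0,
\]
where $\iota \colon \Delta_1 \hto \Mb_2$ is the inclusion. Both ends are presentable as global quotient stacks. The generators $\lambda_1, \lambda_2$ (the Chern classes of the Hodge bundle) will be read off from the complement, while $\delta_1 \defeq \iota_*(1)$ generates the kernel $\im \iota_*$; the entire content is to pin down the ideal of relations, which splits into those visible on $\Mb_2 \sm \Delta_1$ and those produced by $\iota_*$.

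First I would treat $U \defeq \Mb_2 \sm \Delta_1$, the locus of curves with no disconnecting node. Every such curve carries a canonical degree-$2$ map to a genus-$0$ base, exhibiting it as a double cover branched along a binary sextic; this realizes $U$ as an open substack of $[V_6(m)/\GL_2]$ for a suitable twist $m$, where $V_6(m) = \Sym^6 V^* \otimes (\det V)^{\otimes m}$. By \eqref{eq:proj-bdl} we have $\CH^*([V_6(m)/\GL_2]) = \CH^*(\B\GL_2) = \Z[\alpha_1,\alpha_2]$, so by excision $\CH^*(U)$ is the quotient of $\Z[\alpha_1,\alpha_2]$ by the classes of the removed loci: the zero form and the sextics with a root of multiplicity $\geq 3$ (which degenerate outside $U$). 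Computing these classes via the incidence correspondence parametrizing a marked root together with its vanishing order and pushing forward the resulting Chern classes, one expresses $\lambda_1, \lambda_2$ and $\delta_0$ as explicit polynomials in $\alpha_1, \alpha_2$. After re-expressing $\alpha_1, \alpha_2$ in terms of $\lambda_1, \lambda_2$, this step yields the Hodge relation $24\lambda_1^2 - 48\lambda_2$ and shows that on $U$, where $\delta_1 = 0$, one has $\delta_0 = 10\lambda_1$ -- the restriction of Mumford's relation.

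Next I would present $\Delta_1 \cong \Sym^2 \Mb_{1,1}$. Writing $\Mb_{1,1}$ as a $\Gm$-quotient of its Weierstrass forms, the symmetric square becomes a quotient by the wreath product $G = (\Gm \times \Gm) \rtimes \Z/2\Z$, so $\CH^*(\Delta_1)$ is governed by $\CH^*(\B G) = \Z[\beta_1,\beta_2,\gamma]/(2\gamma, \gamma^2 + \beta_1\gamma)$ of Lemma~\ref{lem:norm-map-and-presentations}; here $\iota^*\lambda_1 = \beta_1$ and $\iota^*\lambda_2 = \beta_2$, the elementary symmetric functions in the two Hodge classes of the tails. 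Since the node-smoothing line is $\Gamma$-twisted, I would show $\iota^*\delta_1 = -\beta_1 + \gamma$. Applying the projection formula and the $G$-relations from the Lemma then gives the two boundary relations directly: $\delta_1^2 = \iota_*(-\beta_1 + \gamma) = -\lambda_1 \delta_1 + \iota_*\gamma$, and since $2\gamma = 0$ this yields $2\delta_1^2 + 2\delta_1\lambda_1 = 0$; multiplying by $\delta_1$ and using $\gamma^2 = -\beta_1\gamma$ gives $\delta_1^3 + \delta_1^2\lambda_1 = \iota_*(-2\beta_1\gamma) = 0$. The last relation $20\lambda_1\lambda_2 - 4\delta_1\lambda_2 = 2\delta_0\lambda_2$ arises from a relation on $U$ (namely $20\lambda_1\lambda_2 = 0$ there) combined with the gluing, matched against $\im\iota_*$ by the push-pull principle of \S\ref{subsec:push-pull}.

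Finally I would assemble the two computations. The class $\delta_0 - 10\lambda_1$ restricts to $0$ on $U$, hence lies in $\im\iota_*$; matching it against the computed generators forces the integral refinement $\delta_0 = 10\lambda_1 - 2\delta_1$, the coefficient $-2$ reflecting the two branches at the separating node. Combining the relations visible on $U$ with those in $\im\iota_*$ produces the stated presentation, once one checks that the localization sequence is exact on the left so that no further relations appear. I expect this last point to be the main obstacle: the sequence only determines $\CH^*(\Mb_2)$ up to $\im\iota_*$, and because the presentation patches the quotient stack for $\Delta_1$ to the one for its complement, computing $\iota_*$ exactly -- in particular the $2$-torsion correction $\iota_*\gamma$ governed by $\gamma$, which distinguishes $2(\delta_1^2 + \delta_1\lambda_1) = 0$ from the naive vanishing -- and verifying that the two families of relations are jointly complete is the crux of the argument.
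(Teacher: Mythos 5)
This statement is not proved in the paper you were given: it is Theorem~1.1 of \cite{larson-chow-m2}, imported verbatim (in \S\ref{subsec:d1-pres}) as the starting point of all the computations here. So the only meaningful comparison is with the proof in \cite{larson-chow-m2} itself. Your outline does reproduce the broad architecture of that proof: the stratification of $\Mb_2$ into $\Delta_1$ and its complement, the presentation of $\Mb_2 \sm \Delta_1$ as an open substack of $V_6(2)/\GL_2$ via binary sextics, the presentation $\Delta_1 \cong \Sym^2 \Mb_{1,1}$ as a quotient by $G = (\Gm\times\Gm)\rtimes \Z/2\Z$, and the key identification $\iota^*\delta_1 = \gamma - \lambda_1$ (Lemma~6.1 of \cite{larson-chow-m2}), from which your projection-formula derivations $2(\delta_1^2 + \delta_1\lambda_1) = \iota_*(2\gamma) = 0$ and $\delta_1^3 + \delta_1^2\lambda_1 = \iota_*\bigl((\gamma-\beta_1)\gamma\bigr) = \iota_*(-2\beta_1\gamma) = 0$ are correct. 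This much shows that the four displayed relations \emph{hold}, and (via surjectivity of $\iota^*$, which gives $\im\iota_* = (\delta_1)$) that $\lambda_1,\lambda_2,\delta_1$ generate.

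The genuine gap is completeness of the relation ideal, and your framing of that gap is wrong in a way that matters. The excision sequence $\CH^*(\Delta_1) \xrightarrow{\iota_*} \CH^*(\Mb_2) \to \CH^*(\Mb_2 \sm \Delta_1) \to 0$ is only right exact; it gives no control whatsoever over $\ker\iota_*$, i.e.\ over which classes supported on $\Delta_1$ actually vanish in $\CH^*(\Mb_2)$, nor over the integral lifts of relations from the open stratum (e.g.\ why $20\lambda_1\lambda_2$ lifts to $20\lambda_1\lambda_2 - 4\delta_1\lambda_2$ and why $\delta_0 = 10\lambda_1 - 2\delta_1$ rather than some other element of $10\lambda_1 + (\delta_1)$). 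Your proposed resolution --- ``check that the localization sequence is exact on the left'' --- is not an available check: the sequence is simply not left exact in general, and no argument internal to it can certify that no further relations appear. What \cite{larson-chow-m2} actually does at this point is control the kernel of $\iota_*$ using higher Chow groups of the open stratum (the boundary map into $\CH^*(\Delta_1)$) together with auxiliary test families --- notably the bielliptic family of \cite[\S11]{larson-chow-m2} --- to pin down pushforwards across the two mutually incompatible quotient presentations. This is precisely the difficulty that the present paper highlights in \S\ref{sec:methods} and has to fight again (via the same bielliptic family and Grothendieck--Riemann--Roch) in its own excisions. Without that ingredient, your proposal establishes containment of ideals but not equality, which is the actual content of the theorem beyond Mumford's rational computation.
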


\begin{defn}
  As in Definition~\ref{defn:classes}, we will write \( \delta_0 \), \( \delta_1 \), \( \lambda_1 \), and \( \lambda_2 \) for the pullbacks of the respective class from \( \CH^*(\Mb_2) \), whenever this makes sense.
\end{defn}

\begin{thm}[{\cite[\S9]{larson-chow-m2}}]\label{thm:ch-of-d1-delta}
  The Chow ring of \( \Delta_1 \) is given by
  \[
    \CH^*(\Delta_1)\cong
    \Z[\lambda_1,\lambda_2,\gamma]/
    (
    2\gamma,\gamma^2+\lambda_1\gamma,24\lambda_1^2-48\lambda_2,
    24\lambda_1\lambda_2
    ).
  \]
\end{thm}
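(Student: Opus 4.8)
\emph{Strategy.} The plan is to realize $\Delta_1$ as a quotient of affine space by $G$ and excise the locus where one of the two elliptic components degenerates. A general point of $\Delta_1$ parametrizes a curve $E_1\cup_p E_2$ obtained by gluing two genus-one curves at a point; since any point of an elliptic curve may be translated to the origin, this identifies $\Delta_1$ with $\Sym^2\Mb_{1,1} = (\Mb_{1,1}\times\Mb_{1,1})/(\Z/2)$, the $\Z/2$ permuting the factors. Writing $\Mb_{1,1} = [(\mathbb{A}^2\sm 0)/\Gm]$ via the Weierstrass form, with $\Gm$ acting on the coefficients with weights $(4,6)$, this realizes $\Delta_1 = [(\mathbb{A}^2\sm 0)^2/G]$, where $G = (\Gm\times\Gm)\rtimes\Z/2$ acts on $\mathbb{A}^4 = W_4\oplus W_6$; this is the presentation of \cite[\S3.1]{larson-chow-m2}. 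I would first record that $\CH^*_G(\mathbb{A}^4)\cong\CH^*(\B G)$ by \eqref{eq:proj-bdl}, and that the Hodge bundle restricts on each factor to the Hodge line of $\Mb_{1,1}$, so as a $G$-bundle it is $W_{\pm1}$; a direct splitting computation then gives $c_2 = \beta_2$ and $c_1 = \pm\beta_1$ (with no $\gamma$-term, since the correction has parity $1\mp1\equiv 0$), so that $\lambda_2=\beta_2$ and $\lambda_1=\pm\beta_1$ with the sign immaterial modulo $2\gamma$. I am thus reduced to computing an ideal inside $\CH^*(\B G) = \Z[\beta_1,\beta_2,\gamma]/(2\gamma,\gamma^2+\beta_1\gamma)$.

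\emph{The excision.} The complement of $(\mathbb{A}^2\sm0)^2$ in $\mathbb{A}^4$ is $Z = Z_1\cup Z_2$, where $Z_1 = \{0\}\times\mathbb{A}^2$ and $Z_2 = \mathbb{A}^2\times\{0\}$ are the two coordinate $2$-planes, swapped by $\Z/2$ and meeting only at the origin. The localization sequence gives $\CH^*(\Delta_1) = \CH^*(\B G)/I$ with $I = \im\pbig{\CH^*_G(Z)\to\CH^*(\B G)}$, so the entire content is the computation of $I$. I would stratify $Z$ into the $G$-fixed origin and its complement $Z\sm 0 = (Z_1\sm0)\sqcup(Z_2\sm0) = G\times_{\Gm\times\Gm}(Z_1\sm0)$, which is \emph{induced} from the index-$2$ subgroup $\Gm\times\Gm\hto G$. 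For such an induced subspace the pushforward factors as the transfer $\pi_*$ of Lemma~\ref{lem:norm-map-and-presentations} composed with the $(\Gm\times\Gm)$-equivariant pushforward from $Z_1\sm0$, so by \S\ref{subsec:push-pull} the contribution of $Z\sm0$ is the ideal generated by $\pi_*$ applied to a set of module generators of the $(\Gm\times\Gm)$-image of $Z_1\sm0$.

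\emph{The calculation.} Over $\Gm\times\Gm$ the class of $Z_1$ is the Euler class $c_2(L_4\oplus L_6) = 24 t_1^2$ (consistent with $\CH^*(\Mb_{1,1}) = \Z[\lambda_1]/(24\lambda_1^2)$), so its image ideal is $(24t_1^2)$, which over the symmetric subring is generated by $24t_1^2$ and $24t_1^3$ (using that $\Z[t_1,t_2]$ is free on $1,t_1$ over $\im\pi^*$, as noted after Lemma~\ref{lem:norm-map-and-presentations}). Applying the norm formulas yields $\pi_*(24t_1^2) = 24(\beta_1^2+\beta_1\gamma-2\beta_2) = 24\beta_1^2 - 48\beta_2$ and $\pi_*(24t_1^3) = 24\beta_1^3 - 72\beta_1\beta_2 \equiv -24\beta_1\beta_2$ modulo the first, where in both cases the $\gamma$-terms vanish because $2\gamma = 0$. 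The origin contributes only $c_4(W_4\oplus W_6) = c_2(W_4)c_2(W_6) = 576\beta_2^2$, and since $48\beta_2^2 = \beta_1\cdot(24\beta_1\beta_2) - (24\beta_1^2-48\beta_2)\beta_2$ this already lies in $(24\beta_1^2-48\beta_2,\,24\beta_1\beta_2)$ and is redundant. Hence $I = (24\beta_1^2 - 48\beta_2,\, 24\beta_1\beta_2)$, and substituting $\lambda_i = \beta_i$ gives exactly the claimed presentation.

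\emph{The main obstacle.} The delicate point throughout is the bookkeeping of the $2$-torsion $\gamma$-classes: a priori the fundamental class $[Z]$ differs from the transferred class $\pi_*[Z_1]$ by a multiple of $\beta_1\gamma$, and $\beta_1\gamma = -\gamma^2$ is genuinely nonzero in $\CH^*(\Delta_1)$, so such corrections matter for the shape of the answer. What makes everything collapse is that every relevant cycle class is divisible by $24$, so $2\gamma=0$ annihilates all $\gamma$-corrections; making this rigorous is precisely why I would work through the induced-space description of $Z\sm0$, where the pushforward equals $\pi_*$ of an honest $(\Gm\times\Gm)$-class with no ambiguity, rather than attempting to guess $[Z]$ directly. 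As consistency checks I would verify that $c_2(W_4)=16\beta_2$ carries no $\beta_1\gamma$-term by restriction to $\B\Z/2$, and that $\CH^*(\Delta_1)\otimes\Q = \Q[\lambda_1]/(\lambda_1^3)$ agrees with the $\Z/2$-invariants of $\CH^*(\Mb_{1,1}^2)\otimes\Q = \Q[t_1,t_2]/(t_1^2,t_2^2)$.
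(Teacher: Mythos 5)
Your proposal is correct, and it is essentially the argument behind the paper's own proof, which is just a citation of \cite[\S9]{larson-chow-m2}: there, as in your write-up, one uses the presentation $\Delta_1 \cong \pbig{(L_{4,6}\sm 0)^2}/G$, identifies the Hodge bundle with $W_1$ so that $\lambda_i = \beta_i$, and excises the two coordinate planes by combining the Euler classes $24t_1^2$ (and the origin's class $576\beta_2^2$, which is redundant) with the norm map $\pi_*$ from $\Gm\times\Gm$ to $G$ -- exactly the induced-space/transfer technique this paper itself deploys in \S\ref{subsec:chow-of-d1-d01}. Your key computations ($\pi_*(24t_1^2) = 24\beta_1^2 - 48\beta_2$, $\pi_*(24t_1^3) \equiv -24\beta_1\beta_2$, and the vanishing of $\gamma$-corrections via $2\gamma = 0$) all check out against Lemma~\ref{lem:norm-map-and-presentations}.
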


In \cite[\S9]{larson-chow-m2}, It will be more convenient for us to present this in terms of the generators \( \lambda_1 \), \( \lambda_2 \), and \( \delta_1 \).
By \cite[Lemma~6.1]{larson-chow-m2} we have that \( \delta_1 = \gamma - \lambda_1 \) on \( \Delta_1 \), and hence:
\begin{equation}\label{eq:ch-of-delta1}
  \begin{split}
    \CH^*(\Delta_1)&\cong
    \Z[\lambda_1,\lambda_2,\delta_1]/ (
    2(\delta_1+\lambda_1),
    \delta_1(\delta_1+\lambda_1),
    24\lambda_1^2-48\lambda_2,
    24\lambda_1\lambda_2
    )
  \end{split}
\end{equation}

\section{Excising \texorpdfstring{$\Delta_{001}$}{Δ₀₀₁} and \texorpdfstring{$\Delta_{01}$}{Δ₀₁}}
In this section, we carry out the first two steps of the program outlined in \S\ref{sec:methods}, by excising the loci $\Delta_{001}$ and $\Delta_{01} \sm \Delta_{001}$.
Along the way, we will also determine the Chow rings of $\Delta_1 \sm \Delta_{001}$ and $\Delta_1 \sm \Delta_{01}$:
\begin{propn}\label{propn:delta-summary}
  The rings $\CH^*(\Delta_1 \sm \Delta_{001})$, $\CH^*(\Delta_{1} \sm \Delta_{01})$, $\CH^*(\Mb_2 \sm \Delta_{001})$, and $\CH^*(\Mb_2 \sm \Delta_{01})$ are given by the following presentations:
\[
    \arraycolsep=2pt
    \renewcommand\arraystretch{1.5}
    \begin{array}{ll}
    \CH^*(\Delta_1 \sm \Delta_{001})
    & \cong
    \CH^*(\Delta_1) / ({
        144 \lambda_2
    })
    \\ & \cong
    \Z[\lambda_1,\lambda_2,\delta_1] / ({
        2(\delta_1+\lambda_1),
        \delta_1(\delta_1+\lambda_1),
        24\lambda_1^2-48\lambda_2,
        24\lambda_1\lambda_2,
        144\lambda_2
    })
    \\[10pt]
    \CH^*(\Delta_{1} \sm \Delta_{01})
    & \cong
    \CH^*(\Delta_1 \sm \Delta_{001}) / ({
        12 \delta_1,
        24 \lambda_2
    })
    \\ & \cong
    \Z[\lambda_1,\lambda_2,\delta_1] / ({
        2(\delta_1+\lambda_1),
        \delta_1(\delta_1+\lambda_1),
        12\lambda_1,
        24\lambda_2
    })
    \\[10pt]
    \CH^*(\Mb_2 \sm \Delta_{001})
    & \cong
    \CH^*(\Mb_2) / ({
        144 \delta_1 \lambda_2
    })
    \\ & \cong
    \Z[\lambda_1, \lambda_2, \delta_1 ] / ({
        24\lambda_1^2 - 48\lambda_2,
        20\lambda_1\lambda_2 - 4\delta_1\lambda_2,
        \delta_1^3 + \delta_1^2 \lambda_1,
        2\delta_1^2 + 2\delta_1 \lambda_1,
        144\delta_1\lambda_2
    })
    \\[10pt]
    \CH^*(\Mb_2 \sm \Delta_{01})
    & \cong
    \CH^*(\Mb_2 \sm \Delta_{001}) / ({
        12 \delta_1 (2\lambda_1 + \delta_1),
        24 \delta_1 \lambda_2
    })
    \\ & \cong
    \Z[\lambda_1, \lambda_2, \delta_1 ] / ({
      24\lambda_1^2 - 48\lambda_2,
      20\lambda_1\lambda_2 - 4\delta_1\lambda_2,
      \delta_1^3 + \delta_1^2 \lambda_1,
    } \\ & \hspace{7.5em} { 
      2\delta_1^2 + 2\delta_1 \lambda_1,
      12\delta_1\lambda_1,
      24\delta_1\lambda_2
    }).
    \end{array}
\]
\end{propn}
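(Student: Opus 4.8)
The plan is to obtain all four rings from localization (excision) sequences, peeling off one stratum at a time. Recall that for a closed substack $i \colon Z \hto X$ with open complement $U$ one has $\CH^*(U) \cong \CH^*(X)/\im(i_* \colon \CH^*(Z) \to \CH^*(X))$, so each computation reduces to identifying the image of a pushforward as an explicit ideal. The four rings arise from excising $\Delta_{001}$ out of $\Delta_1$, then $\Delta_{01}\sm\Delta_{001}$ out of $\Delta_1\sm\Delta_{001}$, and separately $\Delta_{001}$ out of $\Mb_2$, then $\Delta_{01}\sm\Delta_{001}$ out of $\Mb_2\sm\Delta_{001}$. By the push-pull principle of \S\ref{subsec:push-pull}, once the relevant pullback is surjective the image is generated as an ideal by the pushforwards of a set of module generators; so the real content is to describe the strata explicitly and to compute these pushforwards.

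First I would use the presentation of $\Delta_1$ as a global quotient from \cite[\S3.1]{larson-chow-m2} (whose Chow ring is recalled in \eqref{eq:ch-of-delta1}) to realize $\Delta_{001}$ and $\Delta_{01}\sm\Delta_{001}$ as explicit closed substacks. Geometrically $\Delta_1$ parametrizes a pair of genus-one tails glued at a point, modulo the $\Z/2$ swapping them; $\Delta_{01}$ is the codimension-one locus where one tail has degenerated to a nodal cubic, and $\Delta_{001}$ the codimension-two locus where both have. On $\Delta_{001}$ the two tails are indistinguishable, so the swap survives, whereas on $\Delta_{01}\sm\Delta_{001}$ the tails are distinguishable and the $\Z/2$ is broken. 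Using the projective-bundle and Chern-class formulas \eqref{eq:proj-bdl} I would compute the fundamental class $[\Delta_{001}] = 144\lambda_2$ in $\CH^*(\Delta_1)$, check that the pullback to $\Delta_{001}$ is surjective, and conclude that the image of the pushforward is exactly $(144\lambda_2)$; this gives the first presentation after simplifying against \eqref{eq:ch-of-delta1}.

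For the excision of $\Delta_{01}\sm\Delta_{001}$ the pullback is \emph{not} surjective, and this is where the $\Z/2$-symmetry does real work. Because the inclusion of the asymmetric locus $\Delta_{01}\sm\Delta_{001}$ into the symmetrized $\Delta_1\sm\Delta_{001}$ is governed by the transfer from $\Gm\times\Gm$ to $G$, I would invoke the norm-map formulas of Lemma~\ref{lem:norm-map-and-presentations} together with the remark following it: the image of the pushforward is computed by pushing forward the two module generators $1$ and $t_1$, producing the two ideal generators $12\delta_1$ and $24\lambda_2$. Quotienting the first presentation by $(12\delta_1, 24\lambda_2)$ and simplifying yields the second presentation.

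Finally, I would transfer both computations from $\Delta_1$ to $\Mb_2$ along $i \colon \Delta_1 \hto \Mb_2$, whose pullback on $\lambda_1,\lambda_2,\delta_1$ is surjective and which satisfies $i_*(1) = \delta_1$ (the class defined in Theorem~\ref{thm:chow-M2bar}). Factoring $\Delta_{001} \hto \Delta_1 \hto \Mb_2$ and applying the projection formula, the clean generator transfers as $i_*(144\lambda_2) = 144\,\delta_1\lambda_2$ since $\lambda_2$ is pulled back, giving $\CH^*(\Mb_2\sm\Delta_{001}) \cong \CH^*(\Mb_2)/(144\delta_1\lambda_2)$; likewise $24\lambda_2$ transfers to $24\delta_1\lambda_2$, while the $\delta_1$-generator transfers, via projection together with the self-intersection formula for $i^*\delta_1$, to $12\delta_1(2\lambda_1+\delta_1)$, producing the last presentation. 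I expect the main obstacle to be the third step: correctly setting up the transfer for the asymmetric stratum $\Delta_{01}\sm\Delta_{001}$ and identifying its module generators, since this is precisely where the symmetric-product structure of $\Delta_1$ forces a two-generator image rather than a principal one. The remaining passages from the quotient descriptions to the explicit $\Z[\lambda_1,\lambda_2,\delta_1]$-presentations are then routine ideal computations.
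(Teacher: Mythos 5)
Your proposal is correct and follows essentially the same route as the paper: excising the strata in the same order via the quotient presentation of $\Delta_1$, computing $[\Delta_{001}]=144\lambda_2$ as an (equivariant) Euler class, handling $\Delta_{01}\sm\Delta_{001}$ through the norm map from $\Gm\times\Gm$ to $G$ of Lemma~\ref{lem:norm-map-and-presentations}, and transferring both computations to $\Mb_2$ by factoring through $\Delta_1$ using surjectivity of $i^*$ and $i_*(1)=\delta_1$. The only imprecision is cosmetic: in the norm step one pushes forward the products of the fundamental class of the stratum (namely $12t_1$) with the module generators $1$ and $t_1$ (or $t_2$), rather than the module generators themselves, but this is exactly the procedure you describe and yields the ideal $(12\delta_1,24\lambda_2)$ as claimed.
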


The rest of this section will be devoted to the proof of this proposition.
The computation proceeds in four steps, by using the following excision sequences.
In other words, our starting point is the Chow rings of $\Delta_1$ and $\Mb_2$ by \cite{larson-chow-m2} (see \S\ref{subsec:d1-pres}).
In each case, having already computed the second term in the sequence, we compute the third term by finding the image of the first arrow.
\begin{align*}
  \text{(i)} &
  & \CH^*(\Delta_{001}) && \longrightarrow &
  & \CH^*(\Delta_1) && \longrightarrow &
  & \CH^*(\Delta_1 \sm \Delta_{001}) && \longrightarrow &
  & 0
  \\
  \text{(ii)} &
  & \CH^*(\Delta_{01} \sm \Delta_{001}) && \longrightarrow &
  & \CH^*(\Delta_{1} \sm \Delta_{001}) && \longrightarrow &
  & \CH^*(\Delta_{1} \sm \Delta_{01}) && \longrightarrow &
  & 0
  \\
  \text{(iii)} &
  & \CH^*(\Delta_{001}) && \longrightarrow &
  & \CH^*(\Mb_2) && \longrightarrow &
  & \CH^*(\Mb_2 \sm \Delta_{001}) && \longrightarrow &
  & 0
  \\
  \text{(iv)} &
  & \CH^*(\Delta_{01} \sm \Delta_{001}) && \longrightarrow &
  & \CH^*(\Mb_2 \sm \Delta_{001}) && \longrightarrow &
  & \CH^*(\Mb_2 \sm \Delta_{01}) && \longrightarrow &
  & 0
\end{align*}

For (i) and (ii), we take advantage of the fact that we are working entirely inside of $\Delta_1$, which has a nice presentation as a global quotient stack \cite[\S3.1]{larson-chow-m2}.
For (iii) and (iv), we reuse the computations in (i) and (ii), factoring the leftmost maps as $\CH^*(\Delta_{001}) \to \CH^*(\Delta_1) \to \CH^*(\Mb_2)$ and $\CH^*(\Delta_{01} \sm \Delta_{001}) \to \CH^*(\Delta_1 \sm \Delta_{001}) \to \CH^*(\Mb_2 \sm \Delta_{001})$, respectively.

\subsection{Quotient presentations of \texorpdfstring{\boldmath $\Delta_{01}$}{Δ₀₁} and \texorpdfstring{$\Delta_{001}$}{Δ₀₀₁}}\label{subsec:quotient-presentations}
We may represent $\overline{\mathcal{M}}_{1,1}$ as a global quotient stack $\Pj(4,6) \cong ( L_{4,6} \sm 0) / \Gm$ using the Weierstrass form for elliptic curves \cite[Section~5.4]{edidin1998equivariant}.
As in \cite[\S3.1]{larson-chow-m2}, we thus obtain a presentation of $\Delta_1 \cong \Sym^2 (\Mb_{1,1})$ as a global quotient stack
\begin{equation}\label{eq:delta1-pres}
  \Delta_{1} \cong
  \pbig{W_{4,6} \sm (L_{4,6} \times 0 \cup 0 \times L_{4,6})}/G \cong
  \pbig{(L_{4,6} \sm 0)^2}/G.
\end{equation}
The ring \( \CH^*(\Delta_1) \) is thus a quotient of \( \CH^*(\B G) \cong \Z[\beta_1, \beta_2, \gamma]/(2\gamma,\gamma^2 + \beta_1\gamma) \).
With respect to this presentation, the Hodge bundle on \( \Delta_1 \) is the pullback under \( \Delta_1 \to BG \) of \( W_1 \) \cite[\S3.1]{larson-chow-m2}, and hence the classes \( \lambda_i \) on \( \Delta_1 \subset \Mb_2 \) coincide with \( \beta_i \).

Inside of \( L_{4,6} \) (the parameter space for Weierstrass curves \( y^2 = x^3+ax+b \)), we have the discriminant locus \( D = \set{4a^3+27b^2=0} \).
With respect to the presentation \eqref{eq:delta1-pres} of \( \Delta_1 \), the strata \( \Delta_{01},\Delta_{001} \subset \Delta_1 \) are given by
\[
  \Delta_{01} =
  \sqBig{\pBig{(D \sm 0)\times(L_{4,6} \sm 0)}\cup
         \pBig{L_{4,6} \sm 0)\times(D \sm 0)}}/G
    \qquad
    \text{and}
    \qquad
  \Delta_{001}=
  (D \sm 0)^2/G.
\]

\subsection{Chow ring of \texorpdfstring{\boldmath $\Delta_1 \sm \Delta_{001}$}{Δ₀₁ \ Δ₀₀₁}} \label{subsec:chow-of-d1-d001}
We use the excision sequence (i) displayed above, or equivalently, by \S\ref{subsec:quotient-presentations}, the sequence:
\begin{equation}\label{eq:001-sequence-G}
  \CH^*_G\pbig{(D \sm 0)^2} \to
  \CH^*_G\pbig{(L_{4,6} \sm 0)^2} \to
  \CH^*_G\pbig{(L_{4,6} \sm 0)^2 \sm (D \sm 0)^2} \to
  0.
\end{equation}
Since we have a $\Gm$-equivariant isomorphism $L_{2} \sm 0 \cong D \sm 0$, given by $t \mapsto (3 t^2,2 t^3)$, it follows from \S\ref{subsec:push-pull} that the image of the first map in \eqref{eq:001-sequence-G} is generated as an ideal by the image of the fundamental class.
Referring to the statement of Proposition~\ref{propn:delta-summary}, we wish to show that this image is $144 \lambda_2$.

The curve $D \sm 0\subset L_{4,6} \sm 0$ is the vanishing locus of the function $4a^3+27b^2$, which can be considered as an equivariant section of the equivariant vector bundle $(L_{4,6} \sm 0)\times_{\B \Gm} L_{12}$, since $\lambda\in\Gm$ acts on $L_{4,6}$ by $a\mapsto\lambda^4a$ and $b\mapsto\lambda^6b$, and hence $4a^3+27b^2 \mapsto \lambda^{12}(4a^3+27b^2)$.
Therefore, $[D \sm 0]_G = \ch_1^G(L_{12}) \in \CH_G^*(L_{4,6} \sm 0)$.

Similarly, $(D \sm 0)^2\subset(L_{4,6} \sm 0)^2$ is the vanishing locus of an equivariant section of $(L_{4,6} \sm 0) \times_{\B G} W_{12}$.
Using \cite[Lemma~7.2]{larson-chow-m2}, we conclude
\[
  [(D \sm 0)^2]_G =
  \ch_2(W_{12}) =
  144 \beta_2 =
  144 \lambda_2 \in
  \CH_G^*\pbig{(L_{4,6} \sm 0)^2},
\]
as required.

\subsection{Chow ring of \texorpdfstring{\boldmath $\Delta_1 \sm \Delta_{01}$}{Δ₁ \ Δ₀₁}}\label{subsec:chow-of-d1-d01}
This time, we will use the exact sequence (ii) above, or equivalently, by \S\ref{subsec:quotient-presentations}, the sequence
\[
\begin{split}
  \CH_G^*\pBig{
    \pbig{(D \sm 0)\times(L_{4,6} \sm D)}\cup
    \pbig{(L_{4,6} \sm D)\times(D \sm 0)}
  }
  \to \qquad \\ \qquad \to
  \CH_G^*\pbig{(L_{4,6} \sm 0)^2 \sm (D \sm 0)^2} \to
  \CH_G^*\pbig{(L_{4,6} \sm D)^2}\to0.
\end{split}
\]
Observe that we have a commutative square of stacks
\[
  \begin{tikzcd}
    \sqBig{(D \sm 0)\times(L_{4,6} \sm D)}/(\Gm\times\Gm)\ar[d, "\sim" sloped]\ar[r]&
    {\pbig{(L_{4,6} \sm 0)^2 \sm (D \sm 0)^2}/(\Gm\times\Gm)}\ar[d]\\
    \sqBig{
      \pbig{(D \sm 0)\times(L_{4,6} \sm D)}\cup
      \pbig{(L_{4,6} \sm D)\times(D \sm 0)}
    }/G
    \ar[r]&
    {\pbig{(L_{4,6} \sm 0)^2 \sm (D \sm 0)^2}/G}
  \end{tikzcd}
\]
where the leftmost vertical arrow is an isomorphism.
Hence, to compute the image of pushforward along the bottom horizontal morphism, we may instead compute the image of the composite of pushforwards along the top and right morphisms.

We begin with
\begin{equation}\label{eq:top-of-square-map}
  \CH^*_{\Gm\times\Gm}\pbig{(D \sm 0)\times(L_{4,6} \sm D)} \to
  \CH^*_{\Gm\times\Gm}\pbig{(L_{4,6} \sm 0)^2 \sm (D \sm 0)^2},
\end{equation}
the image of which is generated as an ideal by the pushforward of the fundamental class by \S\ref{subsec:push-pull}.

This image is the fundamental class of $(D \sm 0)\times(L_{4,6} \sm D) \subset \pbig{(L_{4,6} \sm 0)^2 \sm (D \sm 0)^2}$, which is the vanishing locus of an equivariant section of $L_{12}$ pulled back from the first factor.
Hence the class in question is just $12t_1$ (with $t_1$ as in Definition~\ref{defn:classes}), and thus the image of \eqref{eq:top-of-square-map} is $(12t_1)$.

It remains to find the image of $(12t_1)$ under the pushforward map
\begin{equation}
  \CH^*_{\Gm\times\Gm}\pbig{(L_{4,6} \sm 0)^2 \sm (D \sm 0)^2} \to
  \CH^*_{G}\pbig{(L_{4,6} \sm 0)^2 \sm (D \sm 0)^2},
\end{equation}
i.e., under the norm map \( \pi_* \colon \CH^*\pbig{\B(\Gm \times \Gm)} \to \CH^*(\B G) \).
By Lemma~\ref{lem:norm-map-and-presentations}, this image is generated by
\[
  \pi_*(12t_1)=
  12\pi_*(t_1)=
  12(\beta_1+\gamma)
  \quad
  \text{ and }
  \quad
  \pi_*(12t_1t_2)=
  12\pi_*(t_1t_2)=
  12\beta_2\pi_*(t_1^0)=
  24\beta_2.
\]
In conclusion, the image of
\[
  \CH^*(\Delta_{01} \sm \Delta_{001}) \to \CH^*(\Delta_1 \sm \Delta_{001})
\]
is the ideal \( (12(\lambda_1+\gamma),24\lambda_2) = (12(2\lambda_1+\delta_1),24\lambda_2) = (12 \lambda_1, 24 \lambda_2) \), this last equality coming from the fact that \( 2\delta_1 = - 2\lambda_1 \) in \( \CH^*(\Delta_1 \sm \Delta_{001}) \).
This gives the presentation of \( \CH^*(\Delta_1 \sm \Delta_{01}) \) claimed in Proposition~\ref{propn:delta-summary}.

\subsection{Chow rings of \texorpdfstring{\boldmath $\Mb_2 \sm \Delta_{001}$}{M₂ \ Δ₀₀₁} and \texorpdfstring{$\Mb_2 \sm \Delta_{01}$}{M₂ \ Δ₀₁}} \label{subsec:chow-of-m2-minus-delta01}
We see from Theorems~\ref{thm:chow-M2bar}~and~\ref{thm:ch-of-d1-delta} that the pullback map \( i^* \colon \CH^*(\Mb_2) \to \CH^*(\Delta_1) \) is surjective, and hence that the pushforward map \( \CH^*(\Delta_1) \to \CH^*(\Mb_2) \) is determined by the rule \( (i^*x) \cdot 1 \mapsto x \cdot \delta_1 \).

We factor the first map of the excision sequence (iii) as
\[
  \CH^*(\Delta_{001}) \to \CH^*(\Delta_{1}) \to \CH^*(\Mb_2).
\]
For the first of these, we already computed its image in \S\ref{subsec:chow-of-d1-d001} to be \( (144\lambda_2) \).
Hence, the image of the composite is \( (144\delta_1\lambda_2) \), giving the claimed presentation of \( \CH^*(\Mb_2 \sm \Delta_{001}) \).

Next, we use the excision sequence (iv) and again factor the first map as
\[
  \CH^*(\Delta_{01} \sm \Delta_{001}) \to \CH^*(\Delta_{1} \sm \Delta_{001}) \to \CH^*(\Mb_2 \sm \Delta_{001}).
\]
The image of the first of these maps was determined in \S\ref{subsec:chow-of-d1-d01} to be \( (12(\lambda_1+\gamma),24\lambda_2) \).
Hence, the image of the composite is \( (12(2\lambda_1+\delta_1)\delta_1,24\delta_1\lambda_2) \), giving the claimed presentation of \( \CH^*(\Mb_2 \sm \Delta_{01}) \).

\section{Bielliptic Curves}\label{sec:bielliptic}
In \cite[\S11]{larson-chow-m2}, the stack of bielliptic curves $B$ is constructed as an open substack of the quotient stack
\[
  \ol{B} \defeq
  \sqbig{V_3(1) \times \pbig{V_1(-1) \boxtimes W_{-2}}} / (\GL_2 \times G),
\]
where $\boxtimes$ is the external product of representations, and where $G$ acts trivially on $V_3(1)$.  We view elements of $V_3(1)$ as cubic polynomials $H = ex^3 + fx^2y + gxy^2 + hy^3$ and elements of $V_1(-1) \boxtimes W_{-2}$ as pairs of linear polynomials $(\lin_1,\lin_2) = (ax + by, cx + dy)$.

Specifically, let $\wh{B} \subset V_3(1) \times \pbig{V_1(-1) \boxtimes W_{-2}}$ be the open subscheme consisting of those $(H,\lin_1,\lin_2)$ such that $\lin_1 \cdot \lin_2 \cdot H$ is non-zero and does not have a triple root.
We note that $\wh{B}$ is invariant under the action of $\GL_2 \times G$ and set
\[
  B = \wh{B} / (\GL_2 \times G).
\]
In \S\ref{subsec:univ-family}, we will construct a family of stable curves of genus $2$ over $B$, i.e., a map $\pi \colon B \to \Mb_2$. In fact, this realizes $B$ as the moduli stack of bielliptic curves, as explained in \cite[\S11.1]{larson-chow-m2}, though we will not need this fact.

In \cite[Theorem~11.3]{larson-chow-m2}, the Chow ring of $B$ is computed to be:
\begin{equation}\label{eq:chb-m2-pres}
  \begin{split}
    \CH^*(B)&\cong
    \Z[\gamma,\delta_1,\lambda_1,\lambda_2]/
    (
    2 \gamma,
    \gamma^2 + \lambda_1 \gamma,
    \delta_1^2 + \delta_1 \gamma + 8 \lambda_1^2 - 12 \lambda_2,
    24 \lambda_1^2 - 48 \lambda_2,\\
    &\hspace{8.3em} 
    2 \delta_1^2 + 2 \lambda_1 \delta_1,
    20 \lambda_1 \lambda_2 - 4 \delta_1 \lambda_2,
    8 \lambda_1^3 - 8 \lambda_1 \lambda_2
    ).
  \end{split}
\end{equation}
Here, $\gamma$ is pulled back along $B \to \B G$ and $\delta_1$, $\lambda_1$, and $\lambda_2$ are pulled back along the map $\pi \colon B \to \Mb_2$ which we construct below.
In the remainder of \S\ref{sec:bielliptic}, we prove the following proposition, which we will use to probe $\CH^*(\M_2^\ct)$ in \S\ref{sec:excising-delta0}.

\begin{propn}\label{propn:biell-summary}
  The rings $\CH^*(B \sm \pi\I(\Delta_{000}))$, $\CH^*(B \sm \pi\I(\Delta_{00}))$, and $\CH^*(B \sm \pi\I(\Delta_{0}))$ have the following presentations.
  \[
    \begin{split}
      \CH^*(B \sm \pi^{-1}(\Delta_{000})) &
      \cong \CH^*(B) / ({
        16\lambda_1^2 - 20 \delta_1\lambda_1
      })\\
      &\cong
      \Z[\gamma,\delta_1,\lambda_1,\lambda_2] / ({
        2 \gamma,
        \gamma^2 + \lambda_1 \gamma,
        \delta_1^2 + \delta_1 \gamma + 8 \lambda_1^2 - 12 \lambda_2,
        24 \lambda_1^2 - 48 \lambda_2,
      } \\ & \hspace{8.3em} { 
        2 \delta_1^2 + 2 \lambda_1 \delta_1,
        20 \lambda_1 \lambda_2 - 4 \delta_1 \lambda_2,
        8 \lambda_1^3 - 8 \lambda_1 \lambda_2,
        16\lambda_1^2 - 20 \delta_1\lambda_1
      })\\
      \CH^*(B \sm \pi^{-1}(\Delta_{00}))
      &\cong
      \CH^*(B \sm \pi\I(\Delta_{000})) / ({
        4\lambda_1 - 2\delta_1
      })\\
      &\cong \CH^*(B) / ({
        4\lambda_1 - 2 \delta_1
      })\\
      &\cong
      \Z[\gamma,\delta_1,\lambda_1,\lambda_2] / ({
        2 \gamma,
        \gamma^2 + \lambda_1 \gamma,
        \delta_1^2 + \delta_1 \gamma + 8 \lambda_1^2 - 12 \lambda_2,
      } \\ &\hspace{8.3em} { 
        6 \lambda_1 \delta_1,
        12 \lambda_1 \lambda_2,
        8 \lambda_1^3 - 8 \lambda_1 \lambda_2,
        4\lambda_1-2\delta_1
      }),
      \\
      \CH^*(B \sm \pi^{-1}(\Delta_{0}))
      &\cong
      \CH^*(B \sm \pi\I(\Delta_{00})) / ( {
        6\lambda_1, \delta_1^2 + \delta_1\lambda_1
      })
      \\ &\cong
      \CH^*(B) / ({
        4\lambda_1 -2\delta_1, 6\lambda_1, \delta_1^2 + \delta_1\lambda_1
      })
      \\ &\cong
      \Z[\gamma,\delta_1,\lambda_1,\lambda_2] / ({
        2 \gamma,
        \gamma^2 + \lambda_1 \gamma,
        \delta_1^2 + \delta_1 \gamma + 2 \lambda_1^2 - 12 \lambda_2,
      } \\ & \hspace{8.3em} { 
        2 \lambda_1^3 - 2 \lambda_1 \lambda_2,
        4\lambda_1-2\delta_1,
        6\lambda_1,
        \delta_1^2+\delta_1\lambda_1
      }).
    \end{split}
  \]
\end{propn}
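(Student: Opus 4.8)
The plan is to run the three localization sequences
\begin{align*}
  \CH^*(\pi\I(\Delta_{000})) &\to \CH^*(B) \to \CH^*(B \sm \pi\I(\Delta_{000})) \to 0,\\
  \CH^*(\pi\I(\Delta_{00}) \sm \pi\I(\Delta_{000})) &\to \CH^*(B \sm \pi\I(\Delta_{000})) \to \CH^*(B \sm \pi\I(\Delta_{00})) \to 0,\\
  \CH^*(\pi\I(\Delta_0) \sm \pi\I(\Delta_{00})) &\to \CH^*(B \sm \pi\I(\Delta_{00})) \to \CH^*(B \sm \pi\I(\Delta_0)) \to 0,
\end{align*}
starting from the known ring $\CH^*(B)$ of \eqref{eq:chb-m2-pres}. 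In each sequence the third term is the cokernel of the first map, so the whole computation reduces to identifying the image of each pushforward as an ideal. Each stack appearing is the quotient by $\GL_2 \times G$ of an open subscheme of the representation $V_3(1) \times \pbig{V_1(-1) \boxtimes W_{-2}}$, hence an open substack of a vector bundle over $\B(\GL_2 \times G)$; since the generators $\gamma,\delta_1,\lambda_1,\lambda_2$ of $\CH^*(B)$ are pulled back from $\B(\GL_2\times G)$, the pullback from the base is surjective onto each of these rings. The push--pull principle of \S\ref{subsec:push-pull} then applies: the image of each pushforward is generated as an ideal by the pushforwards of a set of module generators of the source. When the excised locus is irreducible and carries the full $G$-action this is just its fundamental class; when it is a ``split'' piece on which only $\Gm \times \Gm$ acts, with the two copies exchanged by the $\Z/2\Z \subset G$, the image is instead generated by the two classes produced by the norm map of Lemma~\ref{lem:norm-map-and-presentations}, exactly as in \S\ref{subsec:chow-of-d1-d01}.

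First I would give explicit $(\GL_2 \times G)$-invariant descriptions of the three loci inside $\wh B$, using the family of stable curves constructed in \S\ref{subsec:univ-family}. Writing $H = ex^3 + fx^2y + gxy^2 + hy^3$ and $(\lin_1,\lin_2) = (ax+by,cx+dy)$, the boundary type of the curve over a point of $\wh B$ is governed by the coincidences among the roots of $\lin_1$, $\lin_2$, and $H$ (together with the behavior at the point at infinity). I would determine which coincidence patterns produce the single non-separating node of $\Delta_0$, the two non-separating nodes of $\Delta_{00}$, and the theta-graph configuration of $\Delta_{000}$, keeping track throughout of the $\Z/2\Z$-symmetry exchanging $\lin_1$ and $\lin_2$. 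Each locus is then cut out, on the relevant open substack, as the vanishing of an equivariant section of an explicit equivariant bundle built from $V_3(1)$, $V_1(-1)$, and $W_{-2}$ -- for example a resultant $\mathrm{Res}(\lin_i,H)$, a discriminant, or the resultant $\mathrm{Res}(\lin_1,\lin_2)$ -- so that its fundamental class is a Chern class of that bundle.

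The computation then amounts to evaluating these Chern classes and rewriting them, via the dictionary of \S\ref{subsec:univ-family}, in the generators $\gamma,\delta_1,\lambda_1,\lambda_2$ of \eqref{eq:chb-m2-pres}. I expect $\pi\I(\Delta_{000})$ to be of codimension $2$ with class $16\lambda_1^2 - 20\delta_1\lambda_1$, giving the first presentation; the codimension-$1$ discriminant-type locus $\pi\I(\Delta_{00}) \sm \pi\I(\Delta_{000})$ to have class $4\lambda_1 - 2\delta_1$, giving the second; and the final pushforward, where the $\lin_1 \leftrightarrow \lin_2$ symmetry forces the use of the norm map, to produce the two generators $6\lambda_1$ (the image of a codimension-$1$ fundamental class) and $\delta_1^2 + \delta_1\lambda_1$ (the image of its product with a linear class, one degree higher). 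In each case one simplifies the resulting quotient using the relations already present in $\CH^*(B)$ -- for instance reducing $16\lambda_1^2 - 20\delta_1\lambda_1$ modulo $24\lambda_1^2 - 48\lambda_2$ and $2\delta_1^2 + 2\lambda_1\delta_1$, and noting that $\delta_1^2+\delta_1\lambda_1$ is already $2$-torsion by the relation $2\delta_1^2 + 2\lambda_1\delta_1$ -- to reach the stated presentations.

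The hard part will be the geometric bookkeeping of the first step: correctly matching each root-coincidence pattern of $\lin_1\lin_2H$ to a dual graph in Figure~\ref{fig:m2-boundary-strat}, and in particular disentangling the components of $\pi\I(\Delta_0)$ and $\pi\I(\Delta_{00})$. The bielliptic symmetry makes this delicate: a single collision is forced by the involution to produce a \emph{pair} of nodes, so the ``generic'' boundary component of the bielliptic locus may already land in $\Delta_{00}$ rather than merely in $\Delta_0$, and these loci can be reducible and of mixed type. Getting this stratification right is what dictates the order of excision and which pieces are ``split.'' Closely tied to it is the correct application of the norm map in the final step: one must verify that $1$ and a suitable linear class serve as module generators over the image of $\pi_*$, so that $6\lambda_1$ and $\delta_1^2 + \delta_1\lambda_1$ genuinely generate the image rather than only one of them.
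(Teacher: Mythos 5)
Your overall architecture coincides with the paper's: the same three excision sequences starting from \eqref{eq:chb-m2-pres}, the identification of the three loci via the universal family (this is Lemma~\ref{lem:loci-in-b}), the push--pull principle of \S\ref{subsec:push-pull}, and the norm map of Lemma~\ref{lem:norm-map-and-presentations} for components exchanged by $\Z/2\Z$. The gaps are in the computational core. Your rule that an irreducible, $G$-invariant locus contributes only its fundamental class is false, and with it the plan of reading that class off as the Chern class of a resultant or discriminant bundle. What \S\ref{subsec:push-pull} actually requires is a set of \emph{module} generators for the Chow ring of (a resolution of) the excised locus; the fundamental class suffices only when pullback to that resolution is surjective. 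Here the loci are images of squaring/multiplication maps such as $\theta(f,g,h_1,h_2)=(f^2g,h_1,h_2)$, and pullback along $\theta$ is \emph{not} surjective: the source needs $1$ \emph{and} the hyperplane class $x_1$ of the squared $\Pj V_1$-factor as module generators, which is exactly why the paper develops the $u_i^j$-calculus of Lemmas~\ref{lem:uij-formulas}--\ref{lem:diagonal-class}. So the double-root locus, although irreducible and $G$-invariant, contributes two generators, $\theta_*(1)$ and $\theta_*(x_1)$ (the latter works out to $-3\alpha_2$ and is seen to be redundant only after computing it); and $\pi\I(\Delta_{000})$ --- which is moreover itself a split locus, being the union of the loci $H=\lin_1R^2$ and $H=\lin_2R^2$, so that by your own framework the norm map is needed there and not only in the last step --- contributes four generators, as in \eqref{eq:B-minus-d000-norm-push}, of which your quoted $16\lambda_1^2-20\delta_1\lambda_1$ is only the first. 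Your outline gives no access to the remaining generators, so it cannot rule out that the stated ideals are strictly larger; that they are not is precisely what the paper's computations verify.

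Separately, your stratification drops a component. By Lemma~\ref{lem:loci-in-b}, $\pi\I(\Delta_{00})$ is the locus where $H$ has a double root \emph{or} both $\lin_1,\lin_2$ divide $H$, so $\pi\I(\Delta_{00})\sm\pi\I(\Delta_{000})$ is not a single discriminant-type divisor of class $4\lambda_1-2\delta_1$: it also contains the codimension-two locus where $\lin_1,\lin_2\mid H$ and $H$ has no double root. The paper devotes all of \S\ref{subsec:b-excisions-d00-part2}, including the graph-class computation of Lemma~\ref{lem:class-of-graph}, to pushing forward the fundamental class of this component, obtaining $3\alpha_2+24\beta_2+16\alpha_1^2+20\alpha_1\beta_1$, and only then checking that it imposes no new relation. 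You flag the danger of reducible loci of mixed type yourself, but your proposal does not resolve it, and this is exactly where it bites: without this computation the presentation of $\CH^*(B\sm\pi\I(\Delta_{00}))$ (and hence of $\CH^*(B\sm\pi\I(\Delta_0))$, which is built on top of it) is not established.
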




\subsection{The universal family}\label{subsec:univ-family}
As in \cite{larson-chow-m2}, we equip $B$ with a map $\pi \colon B \to \Mb_2$ by constructing a family of stable curves over it as follows.
Consider the following family of curves in weighted projective space $\Pj(1,1,2,2)$ with coordinates $[x:y:z:w]$:
\begin{align}\label{eq:univ-family}
\begin{split}
    z^2 &= \lin_1 \cdot H\\
    w^2 &= \lin_2 \cdot H,
\end{split}
\end{align}
i.e., the subspace of $\wh{B} \times \Pj(1,1,2,2)$ defined by the above equations; we denote this subspace by $\wh{X}$.
Note that $\wh{X}$ is disjoint from the locus $\set{x=y=0} \subset \Pj(1,1,2,2)$ containing all points with non-trivial isotropy.
In particular, the projection $[x:y] \colon \Pj(1,1,2,2) \dto \Pj^1$ restricts to a regular map $\wh{X} \to \Pj^1$, which factors through each of the maps $[x:y:z],[x:y:w]\colon \Pj(1,1,2,2) \to \Pj(1,1,2)$.  This results in a cartesian square of schemes over $\wh{B}$:
\[
  \begin{tikzcd}
    \wh{X} \ar[r, ""] \ar[d, ""'] \pb & E_1 \ar[d, ""] \\
    E_2 \ar[r, ""] & \wh{B} \times \Pj^1.
  \end{tikzcd}
\]
Here, $E_1$ and $E_2$ are the families in $\wh{B} \times \Pj(1,1,2)$ defined by $z^2 = \lin_1 \cdot H$ and by $w^2 = \lin_2 \cdot H$, respectively.
Each $E_i$ is a family of nodal elliptic curves over $\wh{B}$ and the maps $E_i \to \wh{B} \times \Pj^1$ are families of degree two covers.
The fiber of $E_1 \to \wh{B}$ (resp.\ $E_2 \to \wh{B}$) acquires a singularity when $\lin_1 \cdot H$ (resp.\ $\lin_2 \cdot H$) has a double root.

By considering the natural actions of $\GL_2$ on $(x,y)$ and of $G$ on $(z,w)$, we obtain an action of $\GL_2 \times G$ on $\wh{X}$, with respect to which the projection map $\wh{X} \to \wh{B}$ is equivariant.
The quotient
\[
  \wh X / (\GL_2 \times G) \to B
\]
is \emph{almost} our desired universal family.
The problem is that the fibres are not genus 2 stable curves, so we first need to resolve them by performing the following weighted blowup.

Consider the rational map $\wh{X} \dto \Pj(2,2,3)$ given by
\[
  \pbig{
    (a, \ldots, h),
    [x:y:z:w]
  }
  \mapsto
  [z:w:H].
\]
(where $H = ex^3+fx^2y+gx^2y+hy^3$ as above) and define $\wh{Y} \subset \Pj(1,1,2,2) \times V_3(1) \times \pbig{V_1(-1) \boxtimes W_1} \times \Pj(2,2,3)$ to be the closure of the graph of this map.
Note that, exceptionally, we are considering the weighted projective space $\Pj(2,2,3)$ not as a stack but as a scheme.

Explicitly, $\wh{Y}$ is defined by the equations
\begin{equation}
  \begin{split}
    z^2 &= \lin_1 \cdot H \\
    w^2 &= \lin_2 \cdot H \\
    t w & = u z \\
    t^2 \lin_2 & = u^2 \lin_1
  \end{split}
  \qquad
  \begin{split}
    t^3 H^2 & = v^2 z^3 \\
    t^2 u H^2 & = v^2 z^2 w \\
    t u^2 H^2 & = v^2 z w^2 \\
    u^3 H^2 & = v^2 w^3
  \end{split}
  \qquad
  \begin{split}
    t^3 z & = v^2 \lin_1^2 \\
    t^2 u w & = v^2 \lin_1 \lin_2 \\
    t u^2 z & = v^2 \lin_1 \lin_2 \\
    u^3 w & = v^2 \lin_2^2
    , 
  \end{split}
\end{equation}
where $[t:u:v]$ are the coordinates on $\Pj(2,2,3)$.
Again, $\wh{Y}$ inherits a $\GL_2\times G$ action and we have an equivariant map $\wh{Y} \to \wh{B}$.
Taking quotients, the resulting map $Y \defeq \wh{Y} / (\GL_2 \times G) \to B$ is a family of nodal curves, as we will show below.
We summarize the situation:
\begin{lem}\label{lem:loci-in-b}
  The family \( Y \to B \) is a family of genus 2 stable curves, and  the resulting map $\pi \colon B \to \Mb_2$ satisfies the following properties:
  \begin{itemize}
  \item $\pi\I(\Delta_0) \subset B$ is the locus of $(H,\lin_1,\lin_2) \in B$ such that $\lin_1 \mid H$ or $\lin_2 \mid H$ or $H$ has a double root.
  \item $\pi\I(\Delta_{00}) \subset B$ is the locus of $(H,\lin_1,\lin_2) \in B$ such that $\lin_1, \lin_2 \mid H$ or $H$ has a double root.
  \item $\pi\I(\Delta_{000}) \subset B$ is the locus of $(H,\lin_1,\lin_2) \in B$ such that $H$ has a double root, and one of $\lin_1, \lin_2$ divides $H$.
  \item $\pi\I(\Delta_1)$ is the locus of $(H,\lin_1,\lin_2)$ such that $\lin_1$ and $\lin_2$ differ by scaling.
  \end{itemize}
\end{lem}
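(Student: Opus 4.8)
The plan is to pass to the atlas $\wh B$ and to determine, for each $(H,\lin_1,\lin_2)$, the fibre of $\wh Y \to \wh B$ together with its dual graph. All three conditions ``$\lin_i \mid H$'', ``$H$ has a double root'', and ``$\lin_1 \propto \lin_2$'' are $\GL_2 \times G$-invariant, so they descend to closed substacks of $B$; and since $\pi$ is defined by this family, it suffices to match each fibre type to a boundary stratum. First I would treat the generic fibre: over the locus where $H$ has three distinct roots, none of them a root of $\lin_1\lin_2$, and $\lin_1 \not\propto \lin_2$, the two projections realize $C$ as a double cover of each elliptic curve $E_1 \colon z^2 = \lin_1 H$ and $E_2 \colon w^2 = \lin_2 H$, while the involution $(z,w)\mapsto(-z,-w)$ exhibits the hyperelliptic quotient $C/\br{(z,w)\mapsto(-z,-w)}$ as the conic $\{s^2 = \lin_1\lin_2\}\cong\Pj^1$. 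A Riemann--Hurwitz count then shows $C$ is smooth of genus $2$, its six hyperelliptic branch points lying over the three roots of $H$.

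Next I would pin down the role of the weighted blow-up. A local computation shows that the naive fibre product $\wh X_b$ already acquires a node over each root of $H$ -- the common ramification of $E_1$ and $E_2$ -- and that on the two branches $w = \pm\mu z$ the auxiliary coordinates $[t:u:v] = [z:w:H]$ tend to the distinct limits $[1:\pm\mu:0] \in \Pj(2,2,3)$; hence passing to the graph closure $\wh Y$ separates these branches and returns the smooth curve $C$. The same local models, now with branch points colliding, govern which nodes survive. Verifying that every fibre of $\wh Y \to \wh B$ is a nodal curve of arithmetic genus $2$ with no unstable components -- using that $\wh B$ forbids triple roots of $\lin_1\lin_2 H$ to keep the singularities nodal -- is the technical heart, and the step I expect to be the main obstacle: in the deepest degenerations $\wh X_b$ has a point where four branches meet, and one must follow precisely how the blow-up resolves it into the correct number of nodes.

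Granting this, the strata read off from the bielliptic structure. A non-separating node appears on $C$ exactly when one of the $E_i$ degenerates to a nodal cubic, i.e.\ when two of its four branch points (the root of $\lin_i$ and the three roots of $H$) collide: this is precisely the condition $\lin_i \mid H$, while a double root of $H$ degenerates both $E_1$ and $E_2$ at once. Counting nodes yields the first three bullets: $\lin_i\mid H$ alone gives one non-separating node and lands in $\Delta_0\sm\Delta_{00}$; both $\lin_1,\lin_2\mid H$, or a double root of $H$, gives two and lands in $\Delta_{00}$; and a double root of $H$ together with some $\lin_i\mid H$ gives three and lands in $\Delta_{000}$. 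For the last bullet, $C$ becomes reducible -- splitting as $E_1 \cup_{\mathrm{pt}} E_2$ across a separating node -- exactly when the hyperelliptic base $C/\br{(z,w)\mapsto(-z,-w)}$ degenerates, i.e.\ when the conic $s^2 = \lin_1\lin_2$ becomes reducible, which occurs iff $\lin_1\lin_2$ is a square, iff $\lin_1$ and $\lin_2$ differ by scaling; this gives $\pi\I(\Delta_1)$. Since all these are closed conditions cut out by the stated equations, they identify the four preimages as the claimed substacks, matching the dual graphs of Figure~\ref{fig:m2-boundary-strat}.
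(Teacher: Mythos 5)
Your setup — descend the invariant loci to $B$, analyze fibres on the atlas, identify the generic fibre as a smooth bielliptic genus-$2$ curve whose hyperelliptic quotient is the conic $s^2=\lin_1\lin_2$ with branch points over the roots of $H$ — is sound and consistent with the paper. But the actual content of the lemma, namely the node counts over the degenerate loci, is asserted rather than proved, and you say so yourself: the local analysis of how the graph closure $\wh{Y}$ modifies the degenerate fibres of $\wh{X}$ is precisely the step you flag as ``the main obstacle'' and then bypass with ``granting this''. The counts do not follow from your stated principle that a non-separating node appears exactly when some $E_i$ degenerates: there is no uniform local relation between nodes of $C$ and nodes of the $E_i$. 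Over a double root of $H$ the fibre of $\wh{X}$ has four branches meeting at a point, and the blow-up glues them in \emph{pairs} (because $[a:b:0]=[-a:-b:0]$ in $\Pj(2,2,3)$), producing two nodes of $C$ lying \emph{\'etale} over the single node of $E_1$; whereas over a common root of $\lin_1$ and $H$ the fibre of $\wh{X}$ has a tacnode $z^2=w^4\cdot(\mathrm{unit})$, which the blow-up resolves into a \emph{single} node of $C$ mapping to the node of $E_1$ with ramification. Distinguishing these two behaviors is exactly what the paper supplies by proving that $C\to C_1$ is a degree-$2$ cover ramified exactly over the two preimages of the root of $\lin_2$ (by counting values of $(z,w)$, and of $[z:w]$ over roots of $H$, in each fibre); the genus/stability claim and the node counts then follow, and your proposal has no substitute for this mechanism.

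There is also an error in your criterion for $\pi\I(\Delta_1)$. The equivalence ``$C$ is reducible $\iff$ the conic $s^2=\lin_1\lin_2$ is reducible'' is false in this family: over $\pi\I(\Delta_{000})$, say $H=\lin_1 R^2$ with $\lin_1\not\propto\lin_2$, the fibre is two rational curves meeting in three points (a theta graph) — reducible — while the conic is smooth; indeed your own count of three nodes forces this dual graph, so this part of your argument contradicts your node-counting paragraph. What you need is ``$C$ has a \emph{separating} node $\iff$ the conic is reducible'', and the forward implication requires an argument you do not give: that $(z,w)\mapsto(-z,-w)$ restricts to the hyperelliptic involution on every stable fibre, and that the quotient of a genus-$2$ stable curve with a separating node by that involution is a union of two lines meeting at a point, never a smooth conic. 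This route is salvageable, and it would be genuinely different from the paper's, which instead argues irreducibility directly: if the roots of $\lin_1,\lin_2$ are distinct, then (taking $\lin_2\nmid H$) the curve $C_1$ is irreducible by monodromy since it is simply branched over a simple root of $\lin_1 H$, and $C\to C_1$ is simply branched over the preimages of the root of $\lin_2$, so $C$ is irreducible and all its nodes are non-separating; while if the roots coincide, $z^2=\mu w^2$ splits $C$ and the resulting node is separating.
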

\begin{proof}
  Fix a geometric point \( p = (H,\lin_1,\lin_2) \) of \( B \), consider the geometric fibre \( C \) over \( B \), and write \( C \to C_i \to \Pj^1 \), \( i = 1,2 \), for the intermediate semistable elliptic curves.  We claim \(C\) is a degree~\(2\) cover of \(C_1\), ramified exactly over the preimages of the root of \(N_2\).  Indeed, recall $C_1$ is defined by the equation $z^2 = N_1 \cdot H$.  There are four different values of $(z,w)$ for any point in $\Pj^1$ not contained in $V(H\cdot N_1 \cdot N_2)$.  Similarly, there are two distinct values of $w$ over $V(N_1)$, but only one value for $z$.  Lastly, as the blow up $\wh{Y} \to Y$ keeps track of the ratio of $z$ and $w$ when $H$ approaches~$0$, there are two distinct values for $[z:w]$.  This implies our claim.  In particular, the Riemann--Hurwitz formula implies that \(C\) is a stable curve of genus \(2\).

  Which of the above strata of \( B \) the point \( p \) lies in depends on how many nodes the curve \( C \) has, and whether or not they are disconnecting.
  The analysis in the previous paragraph determines the number of nodes of \( C \); namely that
  \begin{itemize}
      \item $C$ acquires two nodes in the fibre over a double root of $H$, and
      \item $C$ acquires a single node in the fibre over a common zero of any two of $\lin_1$, $\lin_2$ and $H$,
  \end{itemize}
  which is as required.
  It just remains to see that the resulting node in \( C \) is disconnecting if and only if we are in the case in which the roots of \( \lin_1 \) and \( \lin_2 \) collide.

  To verify this, we may suppose that only one pair of roots collides, since if we fix our attention on one node, we can smooth the other nodes without changing whether or not it is disconnecting.

  Suppose that the roots of \( N_1 \) and \( N_2 \) coincide.
  Then in \eqref{eq:univ-family}, we have \( z^2 = \mu w^2 \) for some constant $\mu$, and hence \( C \) is reducible.
  Since $C$ has a single node, this node must be disconnecting.

  Suppose that the roots of \( N_1 \) and \( N_2 \) are distinct.
  Without loss of generality, we may assume that \( N_2 \) does not divide \( H \).
  Then \( N_1 \cdot H \) has at least one simple root, over which \( C_1 \to \Pj^1 \) is simply branched; thus \( C_1 \) is irreducible by monodromy considerations.
  Similarly, \( C \to C_1 \) is simply branched point over the two preimages in \( C_1 \) of the root of \( N_2 \), and hence is also irreducible.
\end{proof}

\subsection{General strategy for the excision computations}\label{subsec:biell-excision-strategy}
We now wish to excise various closed substacks from $B$ (namely the closed substacks $\pi\I(\Delta_\star)$ for $\star \in \set{0,00,000}$ appearing in the above proposition) and compute the resulting Chow rings.
We recall the general strategy for doing this from \cite[\S11]{larson-chow-m2}.

To begin with, $\ol{B}$ is a fibre product
\[
  \ol{B}
  \cong
  U_1 \times_X U_2
\]
of the vector bundles $U_1 \defeq V_3(1)/(\GL_2 \times G)$ and $U_2 \defeq (V_1(-1) \boxtimes W_{-2})/(\GL_2 \times G)$ over $X \defeq \B(\GL_2\times G)$. Hence, the Chow ring of $\ol{B}$ is isomorphic to $\CH^*(X)\cong\CH^*(\B\GL_2)\otimes\CH^*(\B G)$ (Chow Künneth holds here by \cite[Lemma~2.12]{totaro-group-cohomology-and-algebraic-cycles}), and so by Lemma~\ref{lem:norm-map-and-presentations}, we have:
\begin{equation}\label{eq:chow-BG}
  \CH^*(\ol B) \cong
  \CH^*(\B\GL_2)\otimes\CH^*(\B G) \cong
  \Z[\alpha_1,\alpha_2,\beta_1,\beta_2,\gamma]/(2\gamma,\gamma^2+\beta_1\gamma).
\end{equation}
The open substack $B \subset \ol{B}$ is the locus parameterizing nonzero polynomials $\lin_1 \cdot \lin_2 \cdot H$ with no triple roots.
In particular, the Chow ring of $B$ is a further quotient of this polynomial ring, as is the Chow ring of the open substack $B \sm Z \subset B$ for the various other closed substacks $Z = \pi\I(\Delta_\star) \subset B$ of interest.
As a first step, we excise the zero sections $U_1 \times 0$ and $0 \times U_2$; this is an Euler class computation, and is carried out in \cite[\S11.3]{larson-chow-m2}.

We may then pass to the projectivizations
\[
  \Pj U_1
  \times_{X}
  \Pj U_2
  \cong
  (\Pj V_3 \times \Pj(V_1 \boxtimes W_{-2})) / (\GL_2 \times G),
\]
where we have used the invariance of projectivization under twisting by line bundles to replace $V_3(1)$ and $V_1(-1)$, by $V_3$ and $V_1$, respectively.  Moreover, each of the closed substacks $Z$ of interest is invariant under scaling in $U_1$ and $U_2$, and hence we obtain an induced closed substack
\[
  \Pj Z
  \hto
  \Pj U_1 \times_X \Pj U_2.
\]
We have a pullback square and thus a commutative square of Chow rings
\[
  \begin{tikzcd}
    Z \sm 0 \ar[r, ""] \ar[d, ""'] \pb & (U_1 \sm 0) \times_X (U_2 \sm 0) \ar[d, ""] \\
    \Pj(Z) \ar[r, ""] & \Pj U_1 \times_X \Pj U_2
  \end{tikzcd}
  \qquad
  \begin{tikzcd}
    \CH^*(Z \sm 0) \ar[r, ""] \ar[from=d, ""'] &
    \CH^*\pbig{(U_1 \sm 0) \times_X (U_2 \sm 0)} \ar[from=d, ""] \\
    \CH^*(\Pj Z) \ar[r, ""] & \CH^*(\Pj U_1 \times_X \Pj U_2).
  \end{tikzcd}
\]
The homomorphism $\CH^*(\Pj Z) \to \CH^*(Z \sm 0)$ is surjective (since $Z \sm 0$ is an open substack of a vector bundle over $\Pj Z$).  Hence to compute the image of $\CH^*(Z \sm 0) \to \CH^*\pbig{(U_1 \sm 0) \times_X (U_2 \sm 0)}$, we may first compute the image of $\CH^*(\Pj Z) \to \CH^*(\Pj U_1 \times_X \Pj U_2)$ (for which \cite{larson-chow-m2} provides tools recalled in \S\ref{subsec:excision-preparations} below), and then pull back the resulting ideal under $(U_1 \sm 0) \times_X (U_2 \sm 0) \to \Pj U_1 \times_X \Pj U_2$.
This last step is done by means of the projective bundle formula, as in Lemma~\ref{lem:proj-pb-formula} below.

The above procedure is carried out in \cite{larson-chow-m2} with $Z \subset \ol{B}$ the triple root locus in order to compute the Chow ring of $B$ as a quotient of $\Z[\alpha_1,\alpha_2,\beta_1,\beta_2,\gamma]$.

Finally, \cite{larson-chow-m2} gives explicit formulas, which we recall in Lemma~\ref{lem:alpha-to-lambda-sub}, allowing us to re-express $\CH^*(B)$ as generated by the pullbacks of $\gamma$ from $\CH^*(\B G)$ and $\delta_1, \lambda_1, \lambda_2$ from $\CH^*(\Mb_2)$.

\subsection{Preparations for the excision computations}\label{subsec:excision-preparations}
We collect here some notation and facts that we will use repeatedly in the computations that follow.
Recall $V_r = \Sym^r(V^*)$ is the $r^{th}$ symmetric product of the dual of the standard representation $V$ of $\GL_2$.

We will adopt the convention in this section that for any fibre product $Y = \Pj E_{1} \times_X \cdots \times_X \Pj E_{k}$ of projective bundles over some base $X$, we write
\[
  x_i \in \CH^*(Y)
\]
for the pullback under the projection \( \pi_i \colon Y \to \Pj E_i \) of the hyperplane class on $\Pj E_i$.
Recall from the end of \S\ref{subsec:conventions} that \( \CH^*(Y) \) is generated as a \( \CH^*(X) \)-algebra by \( x_1,\ldots,x_k \).

Following \cite[Definition~4.1]{larson-chow-m2}, we write
\[
  s_r^j \in \CH^j_{\GL_2}(\Pj V_r),
\]
for $0 \le j \le r$, for the pushforward of $x_1 x_2 \cdots x_j$ under the multiplication map $(\Pj V_1)^j \times \Pj V_{r-j} \rightarrow \Pj V_r$.
Moreover, given a product $Y = \Pj V_{r_1} \times \cdots \times \Pj V_{r_k}$, we write
\[
  u_i^j \in \CH_{\GL_2}^j(Y),
\]
for the pullback along \( \pi_i \colon Y \to \Pj V_{r_i} \) of \( s_{r_i}^j \in \CH^j_{\GL_2}(\Pj V_{r_i}) \).
We recall the following results that we shall need from \cite{larson-chow-m2}.
\begin{lem}[{\cite[Lemma~4.2]{larson-chow-m2}}]\label{lem:uij-formulas}
  The classes \( u_i^j \in \CH^j_{\GL_2}( \Pj V_{r_1} \times \cdots \times \Pj V_{r_k}) \) are given by the recursive formula:
  \begin{equation}
    u_i^0 = 1
    \qquad
    \text{and}
    \qquad
    u_i^{j+1} =
    (x_i - j \alpha_1) \cdot u_i^j +
    j (r_i + 1 - j) \alpha_2 \cdot u_i^{j-1}
  \end{equation}
\end{lem}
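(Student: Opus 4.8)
The plan is to reduce to a single-factor recursion and then evaluate two pushforwards along multiplication maps. Since $u_i^j = \pi_i^* s_{r_i}^j$ and $x_i = \pi_i^*(\text{hyperplane})$ for the projection $\pi_i$ onto the $i$-th factor, and since $\alpha_1,\alpha_2$ are pulled back from $\B\GL_2$, the pullback $\pi_i^*$ is a ring homomorphism carrying any relation among $s_{r}^{\bullet}, x, \alpha_1, \alpha_2$ on a single $\Pj V_{r}$ to the corresponding relation among $u_i^\bullet, x_i, \alpha_1, \alpha_2$. Hence it suffices to prove, on $\Pj V_r$,
\[
  s_r^{j+1} = (x - j\alpha_1)\,s_r^j + j(r+1-j)\,\alpha_2\, s_r^{j-1},
\]
where $x$ is the hyperplane class, $m_j \colon (\Pj V_1)^j \times \Pj V_{r-j} \to \Pj V_r$ is the multiplication map, and $s_r^j = (m_j)_*(x_1\cdots x_j)$; the base case $s_r^0 = 1$ holds since $m_0 = \id$. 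The engine is the identity $m_j^* x = x_1 + \cdots + x_j + w_j$, where $w_j$ is the hyperplane class on $\Pj V_{r-j}$: this holds because $m_j$ is induced by the multiplication $V_1^{\otimes j}\otimes V_{r-j} \to V_r$ of forms, so it pulls the tautological subline bundle back to the external tensor product. Applying the projection formula to $x\cdot s_r^j = (m_j)_*\!\big(m_j^*x \cdot x_1\cdots x_j\big)$ then splits the problem into the $j$ ``diagonal'' terms $(m_j)_*(x_\ell\cdot x_1\cdots x_j)$ and the single term $(m_j)_*(w_j\cdot x_1\cdots x_j)$.

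For the diagonal terms I would use the projective bundle relation $x_\ell^2 = \alpha_1 x_\ell - \alpha_2$ on $\Pj V_1 = \Pj(V^*)$ (from \eqref{eq:proj-bdl}, using $c_1(V^*) = -\alpha_1$, $c_2(V^*) = \alpha_2$) to rewrite $x_\ell\cdot x_1\cdots x_j = \alpha_1\, x_1\cdots x_j - \alpha_2\prod_{k\neq\ell} x_k$. Pushing forward, the first piece contributes $\alpha_1 s_r^j$; for the second, factor $m_j = m_{j-1}\circ\mu_\ell$, where $\mu_\ell$ multiplies the $\ell$-th linear form into the degree-$(r-j)$ factor. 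As $\mu_\ell$ is the identity on the remaining factors and is generically $(r+1-j)$-to-one onto $\Pj V_{r-j+1}$, its pushforward of $1$ is the constant $r+1-j$, so $(m_j)_*\!\big(\prod_{k\neq\ell} x_k\big) = (r+1-j)\,s_r^{j-1}$. Summing over $\ell$ yields $j\alpha_1 s_r^j - j(r+1-j)\alpha_2 s_r^{j-1}$. For the last term I claim $(m_j)_*(w_j\cdot x_1\cdots x_j) = s_r^{j+1}$: factoring $m_{j+1} = m_j\circ(\id\times\mu)$ with $\mu\colon \Pj V_1\times\Pj V_{r-j-1}\to\Pj V_{r-j}$ and applying the projection formula to the spectator factors reduces this to the single identity $\mu_*(x'') = w$, where $x''$ is the hyperplane on $\Pj V_1$ and $w$ that on $\Pj V_{r-j}$.

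The key geometric input is therefore $\mu_*(x'') = w$ for a single multiplication $\mu\colon \Pj V_1\times\Pj V_{s-1}\to\Pj V_s$. I would prove this by realizing $\mu$ as the composite $\Pj V_1\times\Pj V_{s-1}\cong\Phi\hookrightarrow\Pj V_1\times\Pj V_s \xrightarrow{\mathrm{pr}_2}\Pj V_s$, where $\Phi = \{(L,F) : L\mid F\}$ is the divisibility incidence; then $\mu_*(x'') = (\mathrm{pr}_2)_*(\tilde x\cdot[\Phi])$, and computing $[\Phi]$ as the first Chern class of the line bundle whose section cuts out the divisibility condition gives $w$ (consistently with the fact that $\mu$ restricted to $\{L_0\}\times\Pj V_{s-1}$ is an isomorphism onto the hyperplane of forms divisible by $L_0$). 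Assembling the three contributions gives $x\cdot s_r^j = j\alpha_1 s_r^j - j(r+1-j)\alpha_2 s_r^{j-1} + s_r^{j+1}$, which rearranges to the claimed recursion. I expect the main obstacle to be precisely these two pushforwards along multiplication maps — establishing $\mu_*(1) = r+1-j$ and, especially, $\mu_*(x'') = w$ equivariantly and carefully, since the maps are only generically finite and need not be flat; everything else is formal manipulation via the projection formula and the projective bundle relation.
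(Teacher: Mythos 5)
The paper does not actually prove this lemma: it is imported verbatim, by citation, from \cite[Lemma~4.2]{larson-chow-m2}, so there is no internal proof to compare against and your argument must stand on its own. It does. The reduction to the single-factor identity $s_r^{j+1} = (x - j\alpha_1)s_r^j + j(r+1-j)\alpha_2 s_r^{j-1}$ via $u_i^j = \pi_i^* s_{r_i}^j$ is valid; the pullback formula $m_j^*x = x_1 + \cdots + x_j + w_j$ is correct because the multiplication map carries the tautological subline bundle to the external tensor product of the tautological bundles; the diagonal terms are handled correctly by $x_\ell^2 = \alpha_1 x_\ell - \alpha_2$; and the two pushforward evaluations you rightly identify as the crux both hold equivariantly: $(\mu_\ell)_*(1) = r+1-j$ since $\mu_\ell$ is proper, surjective, and generically $(r+1-j)$-to-one between spaces of the same dimension, and $\mu_*(x'') = w$. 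On this last point, one detail you should make explicit in a full write-up: the equivariant class of the incidence divisor is \emph{not} just the pulled-back hyperplane class of $\Pj V_s$. Writing $Q = V_s/\pbig{\oo(-1)\otimes V_{s-1}}$, the divisibility condition is the vanishing of a section of $\oo_{\Pj V_s}(1)\otimes Q$, and since $\ch_1(V_s) - \ch_1(V_{s-1}) = -s\alpha_1$ one gets $[\Phi] = \tilde{w} + s\tilde{x} - s\alpha_1$, where $\tilde{x},\tilde{w}$ are the two hyperplane classes on $\Pj V_1 \times \Pj V_s$. The correction terms are harmless: using $\tilde{x}^2 = \alpha_1\tilde{x} - \alpha_2$ one finds $(\mathrm{pr}_2)_*\pbig{\tilde{x}\,(s\tilde{x} - s\alpha_1)} = s\alpha_1 - s\alpha_1 = 0$, so indeed $\mu_*(x'') = w$ exactly, with no $\alpha_1$-correction, matching your fiberwise sanity check. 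With that computation spelled out, your induction assembles exactly as claimed and gives the recursion of the lemma.
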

\begin{lem}[{\cite[Lemma~4.4]{larson-chow-m2}}]\label{lem:pushforward-sij}
The pushforward in \( \CH_{\GL_2}^* \) along the multiplication map
\[
    \mul \colon \Pj V_a \times \Pj V_b \to \Pj V_{a+b}
\]
is given by
\[
    u_1^\alpha \cdot u_2^\beta
    \mapsto
    {a - \alpha + b - \beta \choose a - \alpha} \cdot u_1^{\alpha + \beta}.
\]
\end{lem}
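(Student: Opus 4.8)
The plan is to reduce the pushforward $\mul_*(u_1^\alpha u_2^\beta)$ to the degree of a single multiplication map, exploiting that $s_r^j$ is \emph{itself} defined as a pushforward of a product of hyperplane classes. The starting point is to realize the product $u_1^\alpha u_2^\beta$ on $\Pj V_a \times \Pj V_b$ as one pushforward. Set
\[
  M \defeq (\Pj V_1)^\alpha \times \Pj V_{a-\alpha} \times (\Pj V_1)^\beta \times \Pj V_{b-\beta},
\]
and let $p \times q \colon M \to \Pj V_a \times \Pj V_b$ be the product of the two multiplication maps defining $s_a^\alpha$ and $s_b^\beta$. Writing $x_1,\ldots,x_\alpha$ and $y_1,\ldots,y_\beta$ for the hyperplane classes on the $\Pj V_1$ factors, I would first check that $u_1^\alpha u_2^\beta = (p\times q)_*\pbig{x_1\cdots x_\alpha\, y_1\cdots y_\beta}$. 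This follows by applying the base-change identity \eqref{eq:pb-square-ch-commutes} to each of the two cartesian squares expressing $\pi_i^* s$ as a pushforward, combined with the projection formula of \S\ref{subsec:push-pull}; the content here is only routine bookkeeping with the two factors.

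Granting this, functoriality of proper pushforward gives $\mul_*(u_1^\alpha u_2^\beta) = \Psi_*\pbig{x_1\cdots x_\alpha\, y_1\cdots y_\beta}$, where $\Psi = \mul \circ (p\times q)\colon M \to \Pj V_{a+b}$ multiplies all the factors together. The key step is to factor $\Psi$ through the splitting space for $s_{a+b}^{\alpha+\beta}$: after reordering the factors of $M$, write $\Psi = m \circ (\id \times \mul')$, where $\mul'\colon \Pj V_{a-\alpha}\times \Pj V_{b-\beta} \to \Pj V_{(a-\alpha)+(b-\beta)}$ is multiplication, $\id$ is the identity on $(\Pj V_1)^{\alpha+\beta}$, and $m\colon (\Pj V_1)^{\alpha+\beta}\times \Pj V_{a+b-\alpha-\beta} \to \Pj V_{a+b}$ is the multiplication map whose pushforward of $z_1\cdots z_{\alpha+\beta}$ defines $s_{a+b}^{\alpha+\beta}$. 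Since $x_1\cdots x_\alpha\, y_1\cdots y_\beta$ is pulled back from the $(\Pj V_1)^{\alpha+\beta}$ factor (it is untouched by $\mul'$, so it equals $(\id\times\mul')^*(z_1\cdots z_{\alpha+\beta})$), the projection formula pulls it out of the inner pushforward, leaving
\[
  \mul_*(u_1^\alpha u_2^\beta) = m_*\pbig{z_1\cdots z_{\alpha+\beta} \cdot (\id\times\mul')_*(1)} = \mul'_*(1)\cdot s_{a+b}^{\alpha+\beta},
\]
where $(\id\times\mul')_*(1) = 1 \boxtimes \mul'_*(1)$ is a constant, namely the degree of $\mul'$, and $s_{a+b}^{\alpha+\beta}$ is the class denoted $u_1^{\alpha+\beta}$ on the target.

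Everything thus reduces to computing $\mul'_*(1)$. The map $\mul'$ is a $\GL_2$-equivariant proper morphism between smooth spaces of the same dimension $(a-\alpha)+(b-\beta)$; it is dominant because every binary form of degree $a+b-\alpha-\beta$ splits into linear factors over $\bar k$, hence factors as a product of a degree-$(a-\alpha)$ and a degree-$(b-\beta)$ form. Over a form with distinct roots, the fibre consists of the unordered splittings of its $a+b-\alpha-\beta$ roots into a group of size $a-\alpha$ and one of size $b-\beta$, of which there are $\binom{a-\alpha+b-\beta}{a-\alpha}$; hence $\mul'$ is generically finite of that degree, and $\mul'_*(1) = \binom{a-\alpha+b-\beta}{a-\alpha}$. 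Substituting yields the claimed formula. I expect the \emph{main obstacle} to be making this degree count rigorous equivariantly: one must verify generic finiteness and dominance of $\mul'$, confirm that the distinct-root locus computes the degree, and invoke the Edidin--Graham finite-dimensional approximations to conclude that the equivariant pushforward of the fundamental class is exactly this geometric degree times the fundamental class of the target. These points are standard but warrant care.
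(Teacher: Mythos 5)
Your proposal is correct, and in fact this paper gives no proof of Lemma~\ref{lem:pushforward-sij} at all --- it is quoted from \cite[Lemma~4.4]{larson-chow-m2}, whose proof is essentially the argument you give: write $u_1^\alpha u_2^\beta$ as the pushforward of $x_1\cdots x_\alpha\,y_1\cdots y_\beta$ from the splitting space, commute the factors so the composite multiplication map factors through $(\Pj V_1)^{\alpha+\beta}\times \Pj V_{a+b-\alpha-\beta}$, and use the projection formula to reduce to the pushforward of the fundamental class along $\Pj V_{a-\alpha}\times\Pj V_{b-\beta}\to\Pj V_{a-\alpha+b-\beta}$, which is the generic degree $\binom{a-\alpha+b-\beta}{a-\alpha}$. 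The points you flag at the end are indeed fine: the map is unramified over squarefree forms in every characteristic (coprime factorizations are rigid), and the degree count passes to the Edidin--Graham finite-dimensional approximations since the fibres are unchanged there.
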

\begin{lem}[{\cite[Lemma~4.7]{larson-chow-m2}}]\label{lem:squaring-map}
The pushforward in \( \CH_{\GL_2}^* \) along the squaring map
\[
  \sq \colon
  \Pj V_1 \to \Pj V_2
\]
is given by
\[
  u_1^0 \mapsto 2 u_1^1 - 2 \alpha_1
  \quad
  \text{and}
  \quad
  u_1^1 \mapsto u_1^2 - 2 \alpha_2.
\]
\end{lem}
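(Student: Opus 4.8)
The plan is to factor the squaring map through the diagonal and reduce to the multiplication pushforward of Lemma~\ref{lem:pushforward-sij}. Writing $[\ell] \mapsto ([\ell],[\ell]) \mapsto [\ell^2]$, the squaring map is the composite
\[
  \sq \colon \Pj V_1 \xrightarrow{\ \Delta\ } \Pj V_1 \times_{\B\GL_2} \Pj V_1 \xrightarrow{\ \mul\ } \Pj V_2,
\]
where $\Delta$ is the relative diagonal and $\mul$ is the multiplication map. All three maps are projective and $\GL_2$-equivariant, so proper pushforward is functorial and $\sq_* = \mul_* \circ \Delta_*$. Since $\CH^*_{\GL_2}(\Pj V_1)$ is free of rank $2$ over $\CH^*(\B\GL_2)$ with basis $u_1^0 = 1$ and $u_1^1 = x_1$ (the case $r_i = 1$ of Lemma~\ref{lem:uij-formulas}), it suffices to evaluate $\sq_*$ on these two classes.

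The one substantive computation is the class of the diagonal $\Delta_*(1)$. On $\Pj V_1 \times_{\B\GL_2} \Pj V_1$ the composite of the tautological inclusion $\oo(-1)_1 \hto V_1$ on the first factor with the projection $V_1 \twoheadrightarrow Q_2$ onto the tautological quotient $Q_2$ of the second factor vanishes exactly along $\Delta$; as $Q_2$ has rank one, $\Delta$ has codimension one and
\[
  \Delta_*(1) = c_1\big(\oo(1)_1 \otimes Q_2\big) = x_1 + x_2 - \alpha_1,
\]
the correction term arising from $c_1(V_1) = c_1(V^*) = -\alpha_1$. (Alternatively one may quote the standard formula for the diagonal class of a projective bundle.) I expect this to be the main obstacle: it is the only genuinely geometric step, and it requires care with the Chern-class conventions for $V_1 = \Sym^1 V^* = V^*$ and with the sign in the projective-bundle relation $x_1^2 = \alpha_1 x_1 - \alpha_2$.

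Everything else is formal. By the projection formula together with $\Delta^* x_1 = \Delta^* x_2 = x_1$, we obtain $\Delta_*(x_1) = x_1 \cdot \Delta_*(1) = x_1(x_1 + x_2 - \alpha_1)$, which simplifies to $x_1 x_2 - \alpha_2$ using $x_1^2 = \alpha_1 x_1 - \alpha_2$ on the first factor. Finally I would apply Lemma~\ref{lem:pushforward-sij} with $a = b = 1$: its binomial formula gives $\mul_*(1) = 2$, $\mul_*(x_1) = \mul_*(x_2) = u_1^1$, and $\mul_*(x_1 x_2) = u_1^2$ on $\Pj V_2$. Combining, $\sq_*(1) = \mul_*(x_1 + x_2 - \alpha_1) = 2 u_1^1 - 2\alpha_1$ and $\sq_*(x_1) = \mul_*(x_1 x_2 - \alpha_2) = u_1^2 - 2\alpha_2$, which are exactly the asserted formulas.
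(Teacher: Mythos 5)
Your proof is correct, and it is essentially the argument of the cited source: the paper itself only quotes this lemma from \cite[Lemma~4.7]{larson-chow-m2}, whose proof likewise factors $\sq = \mul \circ \Delta$ and combines the diagonal class $[\Delta] = x_1 + x_2 - \alpha_1$ (stated here as Lemma~\ref{lem:diagonal-class}) with the multiplication pushforward of Lemma~\ref{lem:pushforward-sij}. Your Chern-class bookkeeping is consistent with the paper's conventions (the relation $x_1^2 - \alpha_1 x_1 + \alpha_2 = 0$ on $\Pj V_1$, since $c_1(V^*) = -\alpha_1$), and the evaluation $\mul_*(x_1 + x_2 - \alpha_1) = 2u_1^1 - 2\alpha_1$, $\mul_*(x_1 x_2 - \alpha_2) = u_1^2 - 2\alpha_2$ checks out.
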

\begin{lem}[{\cite[Lemma~4.5]{larson-chow-m2}}]\label{lem:diagonal-class}
  The pushforward of the fundamental class along the diagonal map \( \Pj V_1 \to \Pj V_1 \times \Pj V_1 \) --- i.e., the class of the diagonal $\Delta \subset \Pj V_1 \times \Pj V_1$ --- is given by
  \[
    [\Delta]_{\GL_2} =
    x_1 + x_2 - \alpha_1
    \in \CH_{\GL_2}^*(\Pj V_1 \times \Pj V_1).
  \]
\end{lem}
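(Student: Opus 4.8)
The plan is to realize the diagonal $\Delta$ as the zero locus of a section of an explicit line bundle on $Y = \Pj V_1 \times_X \Pj V_1$ (the fibre product over $X = \B\GL_2$), so that $[\Delta]_{\GL_2}$ is read off as that line bundle's first Chern class. Recall that $V_1 = \Sym^1 V^* = V^*$ is the rank-two representation dual to the standard representation of $\GL_2$, so that $\ch_1(V_1) = -\alpha_1$ and $\ch_2(V_1) = \alpha_2$. On each factor $\Pj V_1$ we have the tautological sub-line-bundle $\oo(-1) \hto V_1$; pulling these back along the two projections yields sub-line-bundles $S_1, S_2 \hto V_1$ of (the pullback of) $V_1$ on $Y$, with $\ch_1(S_i) = -x_i$ in our conventions, where $x_i$ is the hyperplane class of the $i$-th factor. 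Geometrically, a point of $Y$ is a pair of lines $(\br{\ell_1}, \br{\ell_2})$ in $V_1$, and $\Delta$ is exactly the locus where these two lines coincide, i.e.\ where $S_1 = S_2$ as sub-line-bundles of $V_1$.

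Next I would build the section detecting this coincidence. Consider the composite
\[
  s \colon S_1 \hto V_1 \twoheadrightarrow V_1/S_2,
\]
a global section of the line bundle $L \defeq S_1^\vee \otimes (V_1/S_2)$. At a point $(\br{\ell_1},\br{\ell_2})$ the section sends $\ell_1$ to its image in $V_1/\br{\ell_2}$, which vanishes precisely when $\ell_1 \in \br{\ell_2}$, i.e.\ when the two lines agree; thus $Z(s) = \Delta$ set-theoretically. From the exact sequence $0 \to S_2 \to V_1 \to V_1/S_2 \to 0$ we get $\ch_1(V_1/S_2) = \ch_1(V_1) - \ch_1(S_2) = -\alpha_1 + x_2$, while $\ch_1(S_1^\vee) = x_1$, so that
\[
  \ch_1(L) = x_1 + x_2 - \alpha_1.
\]

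Finally, since $Y$ is irreducible and $s$ is a nonzero section of a line bundle, its zero scheme is an effective Cartier divisor with $[Z(s)]_{\GL_2} = \ch_1(L)$; as a cycle this is a positive-integer multiple $m[\Delta]$, because $\Delta$ is smooth (isomorphic to $\Pj V_1$ via the diagonal) and irreducible of codimension one. To pin down $m$ I would restrict along a geometric point $\pt \to \B\GL_2$: there $Y$ becomes the ordinary $\Pj^1 \times \Pj^1$, the class $\alpha_1$ dies, and $x_1 + x_2$ restricts to the familiar class of the diagonal of $\Pj^1 \times \Pj^1$, forcing $m = 1$. Hence $[\Delta]_{\GL_2} = \ch_1(L) = x_1 + x_2 - \alpha_1$. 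The step requiring the most care is the equivariant Chern-class bookkeeping --- in particular the duality $V_1 = V^*$ that produces the sign in $\ch_1(V_1) = -\alpha_1$ --- together with the verification that $s$ vanishes with multiplicity exactly one rather than a higher multiple. As an independent cross-check one can instead compute $[\Delta]|_\Delta = \ch_1(N_{\Delta/Y}) = \ch_1(T_{\Pj V_1/X}) = 2x_1 - \alpha_1$ by self-intersection and, writing $[\Delta] = a x_1 + b x_2 + c\alpha_1$, match restrictions (using that $x_1 = x_2$ on $\Delta$) to obtain $a + b = 2$ and $c = -1$, with $a = b = 1$ forced by the symmetry exchanging the two factors.
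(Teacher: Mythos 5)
Your proof is correct. Note that the paper itself does not prove this statement: it imports it wholesale as \cite[Lemma~4.5]{larson-chow-m2}, so what you have written is a self-contained substitute for that citation rather than a variant of an argument in this paper. Your argument is the standard one, and all the bookkeeping checks out: with the paper's convention that $\Pj\mathcal{E}$ parametrizes lines, the tautological subbundles $S_i \hto V_1$ satisfy $\ch_1(S_i) = -x_i$, the composite $s \colon S_1 \hto V_1 \twoheadrightarrow V_1/S_2$ is $\GL_2$-equivariant and vanishes exactly where the two lines coincide, and $\ch_1\pbig{S_1^\vee \otimes (V_1/S_2)} = x_1 + x_2 - \alpha_1$ since $\ch_1(V_1) = \ch_1(V^*) = -\alpha_1$. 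In fact, since $V_1$ has rank $2$, one has $V_1/S_2 \cong S_2^\vee \otimes \det V_1$, so your line bundle is canonically $\oo(1) \boxtimes \oo(1) \otimes \det V_1$ and your section is (up to this identification) the wedge-product map $(v,w) \mapsto v \wedge w$ --- which is exactly the form of the argument in \cite{larson-chow-m2}, so your route coincides with the cited proof in essence. Two small remarks. First, your multiplicity-one step via restriction to a geometric fibre is valid (the restriction map $\CH^*_{\GL_2} \to \CH^*(\Pj^1 \times \Pj^1)$ kills $\alpha_1$, and $\CH^*(\Pj^1\times\Pj^1)$ is torsion-free, so $m(x_1+x_2) = x_1+x_2$ forces $m=1$), but it quietly leans on the classical class of the diagonal in $\Pj^1 \times \Pj^1$; a one-line local computation is even cleaner and avoids any appearance of circularity: in the chart $\ell_1 = \br{(1,a)}$, $\ell_2 = \br{(1,b)}$ the section trivializes to $a - b$, which cuts out the diagonal scheme-theoretically with multiplicity one. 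Second, your cross-check via $[\Delta]|_\Delta = \ch_1(N_{\Delta}) = \ch_1(T_{\Pj V_1/\B\GL_2}) = 2x_1 - \alpha_1$ is also correct (using $T_{\Pj V_1/\B\GL_2} \cong S_1^\vee \otimes (V_1/S_1)$ and the freeness of $\CH^1_{\GL_2}(\Pj V_1 \times \Pj V_1)$ on $x_1, x_2, \alpha_1$, with $a=b$ forced by the factor-swapping symmetry), though as stated it relies on the equivariant self-intersection formula, so it is rightly presented as a consistency check rather than the main argument.
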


We will make frequent use of the following projection map:
\begin{defn}\label{defn:phi-projector}
 Let $I$ be the union of $\Pj(V_1 \boxtimes (L_{-2} \times 0))$ and $\Pj(V_1 \boxtimes (0 \times L_{-2}))$ inside $\Pj(V_1 \boxtimes W_{-2})$.
  We denote by
  \[
    \phi \colon \Pj(V_1 \boxtimes W_{-2})\sm I
    \to
    \Pj V_1 \times \Pj V_1
  \]
  the map induced by the linear projections \[V_1 \boxtimes L_{-2} \xot{\id \boxtimes \pi_1} V_1 \boxtimes (L_{-2} \times L_{-2}) \tox{\id \boxtimes \pi_2} V_1 \boxtimes L_{-2}.\]
\end{defn}

\noindent On the codomain of \( \phi \), the line bundle $\oo(1)$ on each factor is a \( \GL_2 \times \Gm \times \Gm \)-equivariant (but not \( (\GL_2 \times G ) \)-equivariant) vector bundle, and hence we have classes \( x_1, x_2 \in \CH^1_{\GL_2 \times \Gm \times \Gm}(\Pj V_1 \times \Pj V_1) \).

\begin{lem}\label{lem:hyperplane-t2-twist}
  For $i = 1,2$, we have:
  \[
    \phi^* x_i = x_1 - 2 t_i \in \CH^*_{\GL_2 \times \Gm \times \Gm}\pbig{\Pj(V_1 \boxtimes W_{-2}) \sm I}
  \]
\end{lem}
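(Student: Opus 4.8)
\section*{Proof proposal}

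The plan is to realize each coordinate of $\phi$ as the projectivization of a linear projection and then compare tautological subbundles. Decompose the underlying space as $V_1 \boxtimes W_{-2} = (V_1 \boxtimes L_{-2}) \oplus (V_1 \boxtimes L_{-2})$, where the $i$-th summand carries the $\Gm \times \Gm$-action of weight $-2$ through its $i$-th factor (so $\ch_1$ of its line part is $-2 t_i$), and let $p_i \colon V_1 \boxtimes W_{-2} \to V_1 \boxtimes L_{-2}$ denote the projection onto the $i$-th summand. Then $\phi = (\phi_1, \phi_2)$, where $\phi_i$ is the map $\Pj(V_1 \boxtimes W_{-2}) \sm I \to \Pj(V_1 \boxtimes L_{-2}) \cong \Pj V_1$ induced by $p_i$; it is defined exactly away from $I = \Pj(\ker p_1) \cup \Pj(\ker p_2)$ and is $\GL_2 \times \Gm \times \Gm$-equivariant, since each $p_i$ is.

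First I would compare tautological bundles. Restricting $p_i$ to the tautological subbundle produces an equivariant morphism of line bundles
\[
  \oo_{\Pj(V_1 \boxtimes W_{-2})}(-1) \hookrightarrow V_1 \boxtimes W_{-2} \xrightarrow{p_i} V_1 \boxtimes L_{-2}
\]
whose image is precisely $\phi_i^* \oo_{\Pj(V_1 \boxtimes L_{-2})}(-1)$; away from $I$ the composite is nowhere vanishing on fibres, hence an isomorphism onto this image. Taking first Chern classes in $\CH^*_{\GL_2 \times \Gm \times \Gm}$ gives $x_1 = \phi_i^* \zeta_{V_1 \boxtimes L_{-2}}$, where $x_1$ is the hyperplane class of the domain (a single projective bundle) and $\zeta_{V_1 \boxtimes L_{-2}}$ is the hyperplane class of the $i$-th twisted projectivization.

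It then remains to absorb the twist. Identifying $\Pj(V_1 \boxtimes L_{-2})$ with the codomain factor $\Pj V_1$ through the twisting isomorphism and applying the formula recalled in \S\ref{subsec:conventions}, I would rewrite
\[
  \zeta_{V_1 \boxtimes L_{-2}} = \zeta_{V_1} - \ch_1(L_{-2}) = x_i + 2 t_i,
\]
using that $\ch_1(L_{-2}) = -2 t_i$ on the $i$-th factor. Combining with the previous step yields $x_1 = \phi^*(x_i + 2 t_i) = \phi^* x_i + 2 t_i$, and hence $\phi^* x_i = x_1 - 2 t_i$, as claimed (here the $x_1$ on the right is the domain's hyperplane class).

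I expect the only genuine obstacle to be the equivariant and twisting bookkeeping rather than any geometry: one must keep the untwisted class $x_i = \zeta_{V_1}$ on the codomain distinct from the class $\zeta_{V_1 \boxtimes L_{-2}}$ that the projection naturally produces, and track the $\Gm \times \Gm$-weights carefully so as to obtain both the coefficient $2$ and the correct sign of the $2 t_i$ correction. The geometric input --- that a linear projection identifies tautological subbundles away from the locus it annihilates --- is routine.
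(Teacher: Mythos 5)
Your proposal is correct and follows essentially the same route as the paper: decompose $V_1 \boxtimes W_{-2}$ as $(V_1 \boxtimes L_{-2}) \oplus (V_1 \boxtimes L_{-2})$, observe that the projectivized linear projection preserves the hyperplane class, and then account for the identification $\Pj(V_1 \boxtimes L_{-2}) \cong \Pj V_1$ via the twisting formula from \S\ref{subsec:conventions}, with all signs and equivariance handled correctly. The only difference is that you supply a proof (via the tautological subbundle comparison) of the general fact that $\Pj f$ pulls back the hyperplane class to the hyperplane class, which the paper simply asserts.
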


\begin{proof}
  In general, given vector bundles $E_1,E_2 \to X$ over a base $X$ and a bundle map \( f \colon E_1 \to E_2 \), and writing $\Pj f \colon \Pj E_1 \sm \Pj(\ker f ) \to \Pj E_2$ for the induced map, we have \( (\Pj f)^*x_1 = x_1 \).
  For each $i = 1,2$, we now apply this to the map
  \[
    [\Pj(V_1 \boxtimes W_{-2}) \sm I] / H
    \toi
    \sqbig{\Pj\pbig{(V_1 \boxtimes L_{-2}) \oplus (V_1 \boxtimes L_{-2})} \sm I} / H
    \tox{\Pj \pi_i}
    \Pj(V_1 \boxtimes L_{-2}) / H
    \toi
    \Pj V_1 / H
  \]
  of projective bundles over \( \B H \), where \( H = \GL_2 \times \Gm \times \Gm \), and use that the last of these maps pulls back $x_1$ to $x_1 - 2t_i$ by the last sentence of \S\ref{subsec:conventions}.
\end{proof}

Finally, we record the formulas mentioned at the end of \S\ref{subsec:biell-excision-strategy}:
\begin{lem}\label{lem:proj-pb-formula}
  The pullback in either \( \CH_{\GL_2 \times G} \) or \( \CH_{\GL_2 \times \Gm \times \Gm} \) along the projectivization map
  \[
    [V_3(1) \sm 0] \times [\pbig{V_1(-1) \boxtimes W_{-2}} \sm 0]
    \to
    \Pj V_3 \times \Pj(V_1 \boxtimes W_{-2})
  \]
  is given by \( x_1 \mapsto \alpha_1 \) and \( x_2 \mapsto -\alpha_1 \).
\end{lem}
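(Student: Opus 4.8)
The plan is to reduce everything to two facts already recorded at the end of \S\ref{subsec:conventions}: that projectivization is unchanged under twisting by a line bundle, with the hyperplane class shifting by $-\ch_1$ of the twist; and that the hyperplane class pulls back to $0$ under the projection from a vector bundle (minus its zero section) to its projectivization. The displayed map is a product of two such projectivization maps, one per factor, and the two arguments are identical up to a sign, so I would carry out the first factor in detail and then record the change for the second. Since the twisting line bundle $\det V$ is pulled back from the $\GL_2$ factor alone, nothing in the computation is sensitive to whether we work $(\GL_2 \times G)$- or $(\GL_2 \times \Gm \times \Gm)$-equivariantly, so both cases follow at once.

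For the first factor, I would factor the map as
\[
  V_3(1) \sm 0 \tox{p} \Pj\pbig{V_3(1)} \toi \Pj V_3,
\]
where $p$ is the tautological projectivization and the second arrow is the twisting isomorphism identifying $\Pj(V_3(1))$ with $\Pj\pbig{V_3(1) \otimes (\det V)^{-1}} = \Pj V_3$. By the twisting formula (with $\mathcal{L} = (\det V)^{-1}$, so $\ch_1(\mathcal{L}) = -\alpha_1$), the second arrow sends $x_1 = \zeta_{V_3}$ to $\zeta_{V_3(1)} + \alpha_1$. For $p$ I would use that $V_3(1) \sm 0$ is canonically the total space of $\oo(-1)$ on $\Pj(V_3(1))$ with its zero section removed; the tautological section of $p^* \oo(-1)$ is then nowhere vanishing, so $p^* \oo(-1)$ is trivial and $p^* \zeta_{V_3(1)} = 0$. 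Composing gives $x_1 \mapsto \alpha_1$.

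The second factor is handled the same way, except that now $V_1(-1) \boxtimes W_{-2} = (V_1 \boxtimes W_{-2}) \otimes (\det V)^{-1}$, so the relevant twist is by $\mathcal{L} = \det V$ with $\ch_1(\mathcal{L}) = +\alpha_1$. The twisting isomorphism then sends $x_2$ to $\zeta_{V_1(-1) \boxtimes W_{-2}} - \alpha_1$, and pulling back along the projectivization kills the hyperplane class, leaving $x_2 \mapsto -\alpha_1$.

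I do not anticipate a substantive difficulty; the entire content is the two cited facts. The only place demanding attention is the sign bookkeeping in the twisting formula --- namely keeping straight which of $V_3(1)$, $V_3$ (respectively $V_1(-1) \boxtimes W_{-2}$, $V_1 \boxtimes W_{-2}$) plays the role of $\mathcal{E}$ versus $\mathcal{E} \otimes \mathcal{L}$. A useful consistency check is that the two factors are twisted by $\det V$ with opposite exponents, which is exactly mirrored in the opposite signs of the two answers $\alpha_1$ and $-\alpha_1$.
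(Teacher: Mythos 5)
Your proof is correct and follows essentially the same route as the paper's: factor each projectivization map through $\Pj\pbig{V_3(1)} \toi \Pj V_3$ (resp.\ the analogous twisting isomorphism on the second factor), apply the twisting formula from \S\ref{subsec:conventions}, and use that the hyperplane class dies under pullback along $\mathcal{E} \sm 0 \to \Pj(\mathcal{E})$. The only differences are elaborations the paper omits --- your justification of that vanishing via triviality of $p^*\oo(-1)$, and your explicit treatment of the second factor and of equivariance --- and your sign bookkeeping checks out against the paper's conventions.
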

\begin{proof}
  In general, for a vector bundle \( \mathcal{E} \), the hyperplane class on \( \Pj(\mathcal{E}) \) vanishes upon pulling back under the projectivization map \( \mathcal{E} \sm 0 \to \Pj(\mathcal{E}) \).
  Thus, pulling back under the composite \( V_3(1) \sm 0 \to \Pj\pbig{V_3(1)} \toi \Pj(V_3) \), we obtain \( x_1 \mapsto x_1 + \ch_1(\det V) \mapsto 0 + \ch_1(\det V) = \alpha_1 \), using the twisting formula at the end of \S\ref{subsec:conventions}.
  The computation on the \( \pbig{V_1(-1) \boxtimes W_{-2}} \) factor is similar.
\end{proof}

\begin{lem}[{\cite[Section~11.6]{larson-chow-m2}}]\label{lem:alpha-to-lambda-sub}
  The generators \(\alpha_1\), \(\alpha_2\), \(\beta_1\), and \(\beta_2\) for $\CH^*(B)$ are expressed in terms of \( \gamma \in \CH^*(\B G) \) and the pullbacks of \(\delta_1, \lambda_1, \lambda_2 \in \CH^*(\Mb_2) \) along \( \pi \colon B \to \Mb_2 \) as follows:
  \begin{align*}
    \alpha_1 & = -2\lambda_1 + \delta_1 + \gamma &
    \beta_1 & =  3\lambda_1 - 2\delta_1\\
    \alpha_2 & =  2\delta_1\lambda_1  -2\lambda_1\gamma - 8 \lambda_2 &
    \beta_2 & =  2\lambda_1^2 - 3\delta_1\lambda_1 + \delta_1^2 + \lambda_1\gamma + \gamma^2 + \lambda_2
  \end{align*}
\end{lem}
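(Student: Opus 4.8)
The plan is to treat the lemma as an explicit change of generators: I will compute the three pulled-back classes $\delta_1$, $\lambda_1$, and $\lambda_2$ as polynomials in $\alpha_1,\alpha_2,\beta_1,\beta_2,\gamma$ and then invert the resulting system. Since $\gamma$ is carried along unchanged, it suffices to produce two degree-one relations, from which I solve for $\alpha_1$ and $\beta_1$, and two degree-two relations, from which I solve for $\alpha_2$ and $\beta_2$.

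First I would compute the boundary class $\delta_1$. By Lemma~\ref{lem:loci-in-b}, $\pi\I(\Delta_1)$ is the locus where $\lin_1$ and $\lin_2$ are proportional, which is the preimage under the projection $\phi$ of Definition~\ref{defn:phi-projector} of the diagonal $\Delta \subset \Pj V_1 \times \Pj V_1$. Hence $\delta_1 = \phi^*[\Delta]$, and I would evaluate this using the tools already assembled: $[\Delta] = x_1 + x_2 - \alpha_1$ by Lemma~\ref{lem:diagonal-class}; $\phi^* x_i = x_1 - 2t_i$ by Lemma~\ref{lem:hyperplane-t2-twist}, where $x_1$ now denotes the hyperplane class of $\Pj(V_1 \boxtimes W_{-2})$; and this hyperplane class pulls back to $-\alpha_1$ by Lemma~\ref{lem:proj-pb-formula}. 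Assembling these gives $-3\alpha_1 - 2(t_1 + t_2)$, which descends to a degree-one polynomial in $\alpha_1,\beta_1,\gamma$ via $\pi^*\beta_1 = t_1 + t_2$ (Lemma~\ref{lem:norm-map-and-presentations}).

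Next I would identify the Hodge bundle $\mathbb{E} \defeq \pi_* \omega_{Y/B}$. From the construction in \S\ref{subsec:univ-family}, each fibre $C$ is a degree-two cover of the two intermediate elliptic curves $E_1, E_2$, so $H^0(C,\omega_C)$ is spanned by the pullbacks of the holomorphic differentials $\omega_i = (x\,dy - y\,dx)/z_i$ (with $z_1 = z$, $z_2 = w$), which the $\Z/2 \subset G$ interchanges. Thus, pulling back along $\B(\Gm \times \Gm) \to \B G$, the bundle $\mathbb{E}$ splits as $\br{\omega_1} \oplus \br{\omega_2}$ with the summands swapped, and its two Chern roots are the $\GL_2 \times \Gm \times \Gm$-weights of $\omega_1$ and $\omega_2$. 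Computing these weights --- the factor $x\,dy - y\,dx$ contributes $\det V^*$, hence $-\alpha_1$, while $1/z_i$ contributes a $\Gm$-weight read off from $z_i^2 = \lin_i \cdot H$ --- and descending to $\B G$ using $\pi^*\beta_1 = t_1 + t_2$ and $\pi^*\beta_2 = t_1 t_2$, I obtain $\lambda_1 = \ch_1(\mathbb{E})$ and $\lambda_2 = \ch_2(\mathbb{E})$. Crucially, because both $x\,dy - y\,dx$ and the $z_i$ involve $V$ only through $\det V$, these expressions contain $\alpha_1$ but never $\alpha_2$.

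Finally I would invert. The two degree-one relations (for $\lambda_1$ and $\delta_1$ in terms of $\alpha_1,\beta_1,\gamma$) form an invertible linear system whose solution is the claimed pair $\alpha_1 = -2\lambda_1 + \delta_1 + \gamma$ and $\beta_1 = 3\lambda_1 - 2\delta_1$. Substituting these into $\lambda_2 = \ch_2(\mathbb{E})$ isolates $\beta_2$, giving the stated formula. The remaining class $\alpha_2$ is where I expect the main obstacle: as observed, it is invisible to both $\mathbb{E}$ and $\delta_1$, so no Euler-class computation produces it directly. Instead I would recover it from the defining relations of the $\alpha,\beta,\gamma$-presentation of $\CH^*(B)$ established in \cite{larson-chow-m2} via the triple-root excision; after substituting the degree-one formulas and the expression for $\beta_2$, one of these relations solves for $\alpha_2$ and yields $\alpha_2 = 2\delta_1\lambda_1 - 2\lambda_1\gamma - 8\lambda_2$. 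Beyond this structural point, the principal difficulty is the careful bookkeeping of the equivariant weights --- in particular the signs and the $2$-torsion class $\gamma$ --- in the Hodge bundle computation, since the swapped summand structure contributes precisely the $\gamma$-terms that the pullback to $\B(\Gm \times \Gm)$ cannot detect.
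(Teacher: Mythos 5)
First, a point of comparison: the paper does not prove this lemma at all --- it is imported verbatim from \cite[\S11.6]{larson-chow-m2} --- so your proposal can only be judged on its own merits. Its overall shape is sensible and close to what the cited source does: identify $\pi\I(\Delta_1)$ with the proportionality locus via Lemma~\ref{lem:loci-in-b}, identify the Hodge bundle as a rank-two bundle that splits over $\B(\GL_2\times\Gm\times\Gm)$ into two line bundles exchanged by the involution, invert the resulting unimodular linear system, and extract $\alpha_2$ --- which, as you correctly observe, no bundle in sight can detect --- from a degree-two relation in the $\alpha,\beta,\gamma$-presentation of $\CH^*(B)$. Your partial computation is also correct: $\phi^*[\Delta]=2x-2(t_1+t_2)-\alpha_1$, which maps to $-3\alpha_1-2(t_1+t_2)$ under Lemma~\ref{lem:proj-pb-formula}.

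The genuine gap is the determination of the $\gamma$-terms, and it is fatal to the method exactly as you have specified it. Every computation you describe is performed after pulling back along $\B(\GL_2\times\Gm\times\Gm)\to\B(\GL_2\times G)$, and by Lemma~\ref{lem:norm-map-and-presentations} the kernel of this pullback is precisely the ideal $(\gamma)$; so your argument can only establish the formulas modulo $(\gamma)$. But the formulas to be proved have nonzero $\gamma$-terms: inverting them gives $\delta_1=-3\alpha_1-2\beta_1+\gamma$, and $\beta_2$ contains $\lambda_1\gamma+\gamma^2$. These terms are structurally invisible to your computation: the involution negates $\lin_1\wedge\lin_2$, so the equivariant line bundle cutting out the proportionality locus carries a factor of the sign representation $\Gamma$; and a rank-two $G$-bundle whose restriction splits as two swapped line bundles is determined only up to twist by $\Gamma$, which changes $c_2$ by $c_1\gamma+\gamma^2$ while leaving the restriction unchanged. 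You flag this difficulty in your last sentence, but ``careful bookkeeping'' of weights inside $\CH^*_{\GL_2\times\Gm\times\Gm}$ cannot resolve it no matter how careful, since the ambiguity lies in the kernel of the map you are computing in. The missing tool is available in the paper itself: as in the proof of Lemma~\ref{lem:class-of-graph}, one supplements the $\Gm\times\Gm$-computation with a second pullback along $\GL_2\times\Z/2\Z\hto\GL_2\times G$ (under which $\beta_1\mapsto\gamma$ and $\beta_2\mapsto 0$), or equivalently one works honestly $G$-equivariantly, e.g.\ using that the Hodge bundle is induced from a line bundle, whose first Chern class is the norm of that of the line bundle plus $\gamma$. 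With such a device your plan would go through; without it, it proves strictly less than the lemma. (Two smaller omissions: $\pi$ is not flat --- $B$ dominates only a divisor in $\Mb_2$ --- so $\pi^*\delta_1=[\pi\I(\Delta_1)]$ additionally requires the preimage to be generically reduced, which does hold since the node-smoothing parameter is locally $z^2-w^2=s$; and your $\alpha_2$ step leans on the $\alpha,\beta,\gamma$-relations of the very paper the lemma is quoted from, which is legitimate here but should be stated.)
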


\subsection{The Chow ring of \texorpdfstring{\boldmath \( B \sm \pi^{-1}(\Delta_{000}) \)}{B \ π⁻¹(Δ₀₀₀)}}\label{subsec:b-excisions-d000}
By Lemma~\ref{lem:loci-in-b}, $\pi\I(\Delta_{000}) \subset B$ is the locus of
\[
    (H,\lin_1,\lin_2) \in
   B \subset \sqbig { V_3(1) \times \pbig{V_1(-1) \boxtimes W_{-2}} } / (\GL_2 \times G)
\]
where $H$ is of the form $\lin_1 R^2$ or $\lin_2 R^2$ for some linear polynomial $R$.
Let
\[
\wt{B} \subset \sqbig { V_3(1) \times \pbig{V_1(-1) \boxtimes W_{-2}} } / (\GL_2 \times \Gm \times \Gm)
\]
be the complement of the locus where $H \cdot \lin_1 \cdot \lin_2$ has a triple root, and let $\wt{\pi\I(\Delta_{000})} \subset \wt{B}$ denote the locus consisting of $(H,\lin_1,\lin_2)$ with $H = \lin_1 R^2$ for some \( R \).
We have a commutative square
\begin{equation}\label{eq:norm-pb-square}
  \begin{tikzcd}
    \wt{\pi\I(\Delta_{000})}
    \ar[r, hookrightarrow] \ar[d, "\sim"' sloped] &
    \wt{B}
    \ar[d, ""] \\
    \pi\I(\Delta_{000}) \ar[r, hookrightarrow] &
    B
  \end{tikzcd}
\end{equation}
where the arrow on the left is an isomorphism.
Thus, the pushforward along the bottom arrow is the composite of the pushforwards along the top and right arrow.
The second of these is given by the norm map from \( \GL_2 \times \Gm \times \Gm \) to \( \GL_2 \times G \).

We now pass to the projectivizations
\[
  \sqbig{ \Pj V_3 \times \Pj(V_1 \boxtimes W_{-2}) } /
  (\GL_2 \times \Gm \times \Gm)
\]
as in \S\ref{subsec:biell-excision-strategy}.
Let $\theta$ be the multiplication map
\[
  \theta \colon \Pj V_1 \times \Pj V_1 \times \Pj V_1\rightarrow \Pj V_3 \times \Pj V_1 \times \Pj V_1
\]
defined by $(f,g,h) \mapsto (f^2g, g, h)$.  We have a pullback square
\begin{equation*}
    \begin{tikzcd}
    \Pj\pbig{\wt{\pi\I(\Delta_{000})}} \ar[r, hookrightarrow, "i"] \ar[d, ""'] \pb &
    { [ \Pj V_3 \times (\Pj(V_1 \boxtimes W_{-2}) \sm I) ] }
    / ( \GL_2 \times \Gm \times \Gm )
    \ar[d, "\id \times \phi"] \\
    {[ \Pj V_1 \times \Pj V_1 \times \Pj V_1 ]}
    / ( \GL_2 \times \Gm \times \Gm )
    \ar[r, hookrightarrow, "\theta"] &
    ( \Pj V_3 \times \Pj V_1 \times \Pj V_1 )
    / ( \GL_2 \times \Gm \times \Gm )
    \end{tikzcd}
\end{equation*}
and hence a commutative square of Chow rings
\begin{equation}\label{eq:id-times-phi-pb-square}
\begin{tikzcd}
    \CH^*\pBig{\Pj\pbig{\wt{\pi\I(\Delta_{000})}}} \ar[r, "i_*"] \ar[from=d, ""'] &
    \CH^*_{\GL_2 \times \Gm \times \Gm}\pbig{ \Pj V_3 \times (\Pj(V_1 \boxtimes W_{-2}) \sm I)
    }
    \ar[from=d, "(\id \times \phi)^*"] \\
    \CH^*_{\GL_2 \times \Gm \times \Gm}(\Pj V_1 \times \Pj V_1 \times \Pj V_1)
    \ar[r, "\theta_*"] &
    \CH^*_{\GL_2 \times \Gm \times \Gm}(\Pj V_3 \times \Pj V_1 \times \Pj V_1)
    .
  \end{tikzcd}
\end{equation}
The vertical maps in the top square are $\Gm$-bundles, and hence the vertical arrows in the bottom square are surjective.
Thus our desired image $\im(i_*)$ can be computed as the image of the composite of $\theta_*$ and $(\id \times \phi)^*$.

Pullback along $\theta$ is not surjective on Chow groups; instead, the fundamental class and $x_1$ generate $\CH^*_{\GL_2 \times \Gm \times \Gm}(\Pj V_1 \times \Pj V_1 \times \Pj V_1)$ as a $\CH^*_{\GL_2 \times \Gm \times \Gm}(\Pj V_3 \times \Pj V_1 \times \Pj V_1)$ module.
Thus, by \S\ref{subsec:push-pull}, the image of $\theta_*$ is generated as an ideal by $\theta_*(1)$ and $\theta_*(x_1)$.

The map $\theta$ factors through a diagonal map on the middle $\Pj V_1$ factor
\begin{equation}\label{eq:d000-diag}
  \Pj V_1 \times \Pj V_1 \times \Pj V_1
  \tox{\id \times \Delta \times \id}
  \Pj V_1 \times (\Pj V_1 \times \Pj V_1) \times \Pj V_1,
\end{equation}
followed by a squaring map on the first factor
\begin{equation}\label{eq:d000-square}
  \Pj V_1 \times \Pj V_1 \times \Pj V_1 \times \Pj V_1
  \tox{\sq \times \id \times \id \times \id}
  \Pj V_2 \times \Pj V_1 \times \Pj V_1 \times \Pj V_1,
\end{equation}
and finally a multiplication map on the first two factors
\begin{equation}\label{eq:d000-mult}
  (\Pj V_2 \times \Pj V_1) \times \Pj V_1 \times \Pj V_1
  \tox{\mul \times \id \times \id}
  \Pj V_3 \times \Pj V_1 \times \Pj V_1.
\end{equation}
From Lemma~\ref{lem:diagonal-class}, the pushforward of the fundamental class along \eqref{eq:d000-diag} is
\( [\Delta_{23}] = x_2 + x_3 -\alpha_1, \)
and since pullback along \eqref{eq:d000-diag} takes \( x_1 \) to \( x_1 \), the pushforward of \( x_1 \) along \eqref{eq:d000-diag} is thus \( x_1 [\Delta_{23}] \) by \S\ref{subsec:push-pull}.
Next, by Lemma~\ref{lem:squaring-map}, and since pullback along \eqref{eq:d000-square} takes \( \Delta_{23} \) to \( \Delta_{23} \), we have that pushing forward under \eqref{eq:d000-square} gives
\begin{align*}
  [\Delta_{23}] & =
  u_1^0 \cdot [\Delta_{23}]
  \\ & \mapsto
  \pbig{2 u_1^1 - 2\alpha_1} [\Delta_{23}]
  =
  \pbig{2 u_1^1 - 2\alpha_1}(x_2 + x_3 - \alpha_1)\\
  x_1 [\Delta_{23}] & =
  u_1^1 \cdot [\Delta_{23}]
  \\ & \mapsto
  \pbig{u_1^2 - 2 \alpha_2} [\Delta_{23}]
  =
  \pbig{u_1^2 - 2 \alpha_2} (x_2 + x_3 - \alpha_1).
\end{align*}
To pushforward along \eqref{eq:d000-mult}, we use Lemma \ref{lem:pushforward-sij}, and the fact that pullback along \eqref{eq:d000-mult} takes \( x_2 \) to \( x_3 \):
\begin{align*}
  (2u_1^1 - 2\alpha_1)(x_2 + x_3 - \alpha_1)
  & =
  (2u_1^1 - 2 u_1^0 \alpha_1 )\pbig{u_2^1 + u_2^0(x_3 - \alpha_1)}
  \\ & \mapsto
  2u_1^2 + 4u_1^1(x_2 - \alpha_1) - 2\alpha_1(u_1^1 + 3u_1^0(x_2 - \alpha_1))
  \\
  (u_1^2 - 2 \alpha_2)(x_2 + x_3 - \alpha_1)
  & =
  (u_1^2 - 2 u_1^0 \alpha_2)\pbig{u_2^1 + u_2^0(x_3 - \alpha_1)}
  \\ & \mapsto
  u_1^3 + u_1^2(x_2 - \alpha_1) - 2\alpha_2 \pbig{u_1^1 + 3 u_1^0 (x_2 - \alpha_1)}.
\end{align*}
We have thus computed the image of \( \theta_* \) as the ideal generated by the above two elements.
To pull these back along \( \id \times \phi \), we substitute \( x_2 \mapsto x_2 - 2t_1 \) as per Lemma~\ref{lem:hyperplane-t2-twist}.
Then, to pull back from the projectivization, we substitute \( x_1 \mapsto \alpha_1 \) and \( x_2 \mapsto -\alpha_1 \) as per Lemma~\ref{lem:proj-pb-formula}.
At the same time, we use Lemma~\ref{lem:uij-formulas} to plug in for the various $u_1^i$:
\begin{equation} \label{eq:uij-on-pv3-values}
\begin{aligned}
u_1^0 &= 1 \hspace{80pt} &
u_1^2 &= (x_1 - \alpha_1) u_1^1 + 3 \alpha_2 u_1^0
\mapsto 3 \alpha_2 \\
u_1^1 &= x_1 \mapsto \alpha_1 &
u_1^3 &= (x_1 - 2 \alpha_1) u_1^2 + 4 \alpha_2 u_1^1
\mapsto \alpha_1 \alpha_2.
\end{aligned}
\end{equation}
This yields
\begin{align*}
  2u_1^2 + 4u_1^1(x_2 - \alpha_1) - 2\alpha_1 \pbig{u_1^1 + 3u_1^0(x_2 - \alpha_1)}
  & \mapsto 6\alpha_2 + 4\alpha_1(-2\alpha_1 - 2t_1) - 2\alpha_1 \pbig{\alpha_1 + 3(-2\alpha_1 - 2t_1)} \\
  & = 6\alpha_2 + 4\alpha_1t_1 +2\alpha_1^2, \\
  u_1^3 + u_1^2(x_2 - \alpha_1) - 2\alpha_2 \pbig{u_1^1 + 3 u_1^0 (x_2 - \alpha_1)}
  & \mapsto \alpha_1\alpha_2 + 3\alpha_2(-2\alpha_1 -2t_1) -2\alpha_2(\alpha_1 + 3(-2\alpha_1 - 2t_1))\\
  & = 5\alpha_1\alpha_2 + 6\alpha_2t_1.
\end{align*}
Finally, we need to push forward the ideal generated by the above elements along the right-hand vertical arrow in \eqref{eq:norm-pb-square}, which is a base change of the norm map \( \pi_* \colon \B(\Gm \times \Gm) \rightarrow \B G \).
Referring to Lemma~\ref{lem:norm-map-and-presentations}, we need to push forward the generators and their products with \( t_2 \).
We obtain:
\begin{equation}\label{eq:B-minus-d000-norm-push}
  \begin{split}
    6\alpha_2 + 4\alpha_1t_1 +2\alpha_1^2 & \mapsto 
    12\alpha_2 + 4\alpha_1^2 + 4\alpha_1(\beta_1 + \gamma)\\
    t_2 (6\alpha_2 + 4\alpha_1t_1 +2\alpha_1^2) & \mapsto 
    (6\alpha_2 + 2\alpha_1^2)(\beta_1 +\gamma) +4\alpha_1(2\beta_2)\\
    5\alpha_1\alpha_2 + 6\alpha_2t_1 & \mapsto 
    10\alpha_1\alpha_2 + 6\alpha_2(\beta_1 +\gamma)\\
    t_2 (5\alpha_1\alpha_2 + 6\alpha_2t_1) & \mapsto 
    5\alpha_1\alpha_2(\beta_1 +\gamma) + 6\alpha_2(2\beta_2)
    . 
  \end{split}
\end{equation}
The last three relations are already equivalent to 0 in $\CH^*(B)$, and using Lemma~\ref{lem:alpha-to-lambda-sub}, the first relation is equal to \( - 16\lambda_1^2 + 20\lambda_1\delta_1 \), giving the relation for \( \CH^*\pbig{B \sm \pi\I(\Delta_{000})} \) appearing in Proposition~\ref{propn:biell-summary}.

\subsection{The Chow ring of \texorpdfstring{\boldmath \( B \sm \pi^{-1}(\Delta_{00}) \)}{B \ π⁻¹(Δ₀₀)}; first part}
By Lemma~\ref{lem:loci-in-b}, \( \pi^{-1}(\Delta_{00}) \subset B \) is the locus of \( (H,\lin_1,\lin_2) \) where the cubic \( H \) has a double root, or where both linear factors \( \lin_i \) divide \( H \).

We excise the first of these here, and carry out the second -- which is somewhat more involved, but turns out not to produce any new relations -- in \S\ref{subsec:b-excisions-d00-part2} below.
The present computation is similar to the one in the previous section, but easier, since we do not need to break the $\Z/2\Z$ symmetry and pass from $G$ to $\Gm \times \Gm$.

As before, we pass to the product of projectivizations
\[
  \sqbig{ \Pj V_3 \times \Pj(V_1 \boxtimes W_{-2}) } / (\GL_2 \times G).
\]
The locus we want to excise can be described as the image of the map
\[
  \theta \colon \Pj V_1 \times \Pj V_1 \times \Pj(V_1 \boxtimes W_{-2}) \rightarrow \Pj V_3 \times \Pj(V_1 \boxtimes W_{-2})
\]
given by $(f,g,h_1,h_2) \rightarrow (f^2g,h_1,h_2)$.
As a $\CH^*(\Pj V_3)$ module, $\CH^*(\Pj V_1 \times \Pj V_1)$ is generated by the fundamental class $1 = u_1^0$ and the hyperplane class $x_1 = u_1^1$ on the first $ \Pj V_1$ factor.
Thus, by \S\ref{subsec:push-pull}, the images of these classes will generate \( \im(\theta_*) \) as an ideal.

The map $\theta$ factors as a squaring map
\begin{equation}\label{eq:d00-part1-square}
  \Pj V_1 \times \Pj V_1 \times \Pj(V_1 \boxtimes W_{-2})
  \tox{\sq \times \id \times \id}
  \Pj V_2 \times \Pj V_1 \times \Pj(V_1 \boxtimes W_{-2})
\end{equation}
followed by a multiplication map
\begin{equation}\label{eq:d00-part1-mult}
  (\Pj V_2 \times \Pj V_1) \times \Pj(V_1 \boxtimes W_{-2})
  \tox{\mul \times \id}
  \Pj V_3 \times \Pj(V_1 \boxtimes W_{-2}).
\end{equation}
Using Lemmas~\ref{lem:squaring-map}~and~\ref{lem:pushforward-sij}, pushing forward along \eqref{eq:d00-part1-square} and \eqref{eq:d00-part1-mult} gives
\begin{align*}
  u_1^0 & \mapsto
  2 u_1^1 - 2\alpha_1 =
  (2 u_1^1 - 2\alpha_1) \cdot u_2^0 \mapsto
  4u_1^1 - 6\alpha_1\\
  u_1^1 & \mapsto
  u_1^2 - 2\alpha_2 =
  (u_1^2 - 2\alpha_2) \cdot u_2^0 \mapsto
  u_1^2 - 6\alpha_2.
\end{align*}
Pulling back along the projectivization map using Lemma~\ref{lem:proj-pb-formula} and plugging in the values of the \( u_1^i \) using \eqref{eq:uij-on-pv3-values}, we obtain relations \( -2\alpha_1 \) and \( -3\alpha_2 \).
The latter element is already 0 in \( \CH^*(B) \), and by Lemma~\ref{lem:alpha-to-lambda-sub}, the first is equal to \( 4\lambda_1 - 2\delta_1 \), giving the relation for $B \sm \pi\I(\Delta_{00})$ appearing in Proposition~\ref{propn:biell-summary}.

\subsection{The Chow ring of \texorpdfstring{\boldmath \( B \sm \pi^{-1}(\Delta_{00}) \)}{B \ π⁻¹(Δ₀₀)}; second part}\label{subsec:b-excisions-d00-part2}
We now excise the locus where both linear factors divide the cubic.
After passing to projectivizations, this is the image of the map
\begin{equation}\label{eq:b-excisions-d00-part2-map}
  \Pj V_1 \times [ \Pj(V_1 \boxtimes W_{-2}) \sm I ]
  \to
  \Pj V_3 \times [ \Pj(V_1 \boxtimes W_{-2}) \sm I ]
\end{equation}
given by $(f,g,h) \mapsto (fgh,g,h)$.
Since the pullback under this map is surjective on Chow rings, it suffices by \S\ref{subsec:push-pull} to push forward the fundamental class.

We define the map
\[
  \psi \colon\Pj(V_1 \boxtimes W_{-2}) \sm I \to \Pj V_2
\]
by \( (f,g) \mapsto f \cdot g \).
In other words, \( \psi = \mul \circ \phi \), with \( \phi \) as in Definition~\ref{defn:phi-projector}.
We may then factor \eqref{eq:b-excisions-d00-part2-map} as the map
\begin{equation}\label{eq:b-minus-d00-first-factor}
  \Pj V_1 \times [ \Pj(V_1 \boxtimes W_{-2}) \sm I ]
  \tox{\id \times \br{\psi,\id}}
  \Pj V_1 \times \pbig{\Pj V_2 \times [ \Pj(V_1 \boxtimes W_{-2}) \sm I]},
\end{equation}
given by \( (f,g,h) \mapsto (f, g \cdot h, g, h) \), followed by the multiplication map
\begin{equation}\label{eq:b-minus-d00-second-factor}
  (\Pj V_1 \times \Pj V_2) \times [\Pj(V_1 \boxtimes W_{-2}) \sm I]
  \tox{\mul \times \id}
  \Pj V_3 \times [\Pj(V_1 \boxtimes W_{-2}) \sm I].
\end{equation}
To begin with, we compute the image of the pushforward of the fundamental class under
\[
  \br{\psi,\id} \colon [\Pj(V_1 \boxtimes W_{-2}) \sm I]
  \to
  \Pj V_2 \times [\Pj(V_1 \boxtimes W_{-2}) \sm I].
\]
This is the class
\[
  \Psi \in \CH_{\GL_2 \times G}^*\pbig{\Pj V_2 \times [\Pj(V_1 \boxtimes W_{-2}) \sm I]}
\]
of the graph of \( \psi \).
\begin{lem}\label{lem:class-of-graph}
  We have
  \[
    \Psi = 2\alpha_2 + 8\beta_2 + 2\alpha_1^2 + 4\alpha_1\beta_1 -4\alpha_1x_2 -4\beta_1x_2 -3\alpha_1x_1
    -2\beta_1x_1 + 2x_1x_2 + 2x_2^2 + x_1^2.
  \]
\end{lem}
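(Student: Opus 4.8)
The plan is to compute $\Psi$ as the flat pullback, along $\id \times \phi$, of the class of the graph of the multiplication map $\mul \colon \Pj V_1 \times \Pj V_1 \to \Pj V_2$, and to obtain that graph class by the pushforward techniques of Lemmas~\ref{lem:diagonal-class} and \ref{lem:pushforward-sij}. First I would observe that, since $\psi = \mul \circ \phi$ and $\phi$ is a $\Gm$-bundle (hence smooth, hence flat), the graph $\Gamma_\psi \subset \Pj V_2 \times [\Pj(V_1\boxtimes W_{-2})\sm I]$ is exactly the preimage $(\id\times\phi)^{-1}\Gamma_\mul$ of the graph $\Gamma_\mul \subset \Pj V_2 \times \Pj V_1 \times \Pj V_1$ of $\mul$, and this preimage is reduced of the expected codimension $2$. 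Consequently $\Psi = (\id\times\phi)^*[\Gamma_\mul]$, so it suffices to compute $[\Gamma_\mul]$ in $\CH^*_{\GL_2}(\Pj V_2 \times \Pj V_1 \times \Pj V_1)$ --- writing $X$ for the hyperplane on $\Pj V_2$ and $y,z$ for those on the two $\Pj V_1$ factors --- and then substitute.

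To compute $[\Gamma_\mul]$, I would factor the graph embedding $(a,b)\mapsto(ab,a,b)$ as the pair of diagonals $(a,b)\mapsto(a,b,a,b)$ into $(\Pj V_1)^4$, followed by multiplication $\mul\times\id\times\id$ on the first two coordinates. By Lemma~\ref{lem:diagonal-class} the pushforward of the fundamental class under the first map is the product of diagonal classes $(x_1 + x_3 - \alpha_1)(x_2 + x_4 - \alpha_1)$; rewriting this in the $u$-class basis via Lemma~\ref{lem:uij-formulas} (so that $x_1 = u_1^1$, $x_2 = u_2^1$) and applying Lemma~\ref{lem:pushforward-sij} to the first two coordinates then yields an explicit polynomial. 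I expect the outcome $[\Gamma_\mul] = X^2 + (y+z-3\alpha_1)X + 2yz - 2\alpha_1(y+z) + 2\alpha_1^2 + 2\alpha_2$, which one can double-check against $\ch_2\big(\oo(1)\otimes(V_2/\mul^*\oo(-1))\big)$: the two representatives differ only by a combination of the relations $y^2-\alpha_1 y+\alpha_2$ and $z^2-\alpha_1 z+\alpha_2$, hence agree.

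Finally I would pull back along $\id\times\phi$. The class $X$ becomes $x_1$, while Lemma~\ref{lem:hyperplane-t2-twist} gives $y\mapsto x_2 - 2t_1$ and $z\mapsto x_2 - 2t_2$ (here $x_2$ is the hyperplane on $\Pj(V_1\boxtimes W_{-2})\sm I$). Expanding and then using $t_1 + t_2 = \beta_1$ and $t_1 t_2 = \beta_2$ from Lemma~\ref{lem:norm-map-and-presentations} re-expresses the (automatically symmetric) result in the $\GL_2\times G$-equivariant generators and produces the stated formula; in particular the term $2yz \mapsto 2(x_2^2 - 2\beta_1 x_2 + 4\beta_2)$ is what accounts for the $8\beta_2$.

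The main obstacle is the bookkeeping across the equivariance levels $\GL_2 \subset \GL_2\times\Gm\times\Gm$ relative to $\GL_2\times G$: the map $\phi$ and the factorization of $\Gamma_\mul$ are available only after breaking the $\Z/2\Z$-symmetry (i.e.\ over $\Gm\times\Gm$), so I must insert the twists of Lemma~\ref{lem:hyperplane-t2-twist} correctly and verify that the pulled-back class is genuinely symmetric in $t_1,t_2$ --- and hence really descends to $\CH^*_{\GL_2\times G}$. This symmetry is both the delicate point and a convenient internal consistency check; the underlying pushforward of $[\Gamma_\mul]$ is routine once organized through the $u$-classes of \S\ref{subsec:push-pull}.
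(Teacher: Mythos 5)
Your computation of the ``visible'' part of $\Psi$ is correct and is in fact exactly the paper's computation: the same factorization of the graph through diagonals and the multiplication map (Lemmas~\ref{lem:diagonal-class} and~\ref{lem:pushforward-sij}), followed by the same twisting substitutions $y \mapsto x_2 - 2t_1$, $z \mapsto x_2 - 2t_2$ from Lemma~\ref{lem:hyperplane-t2-twist}, produces the expression you state. However, there is a genuine gap at the final descent step. What the pullback square gives you is an equality in $\CH^*_{\GL_2 \times \Gm \times \Gm}$ between the \emph{restriction} $\Psi_{\GL_2\times\Gm\times\Gm}$ of $\Psi$ and $(\id\times\phi)^*[\Gamma_\mul]$; it does not give $\Psi$ itself. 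You then argue that since the result is symmetric in $t_1, t_2$ it ``descends'' to $\CH^*_{\GL_2\times G}$, but symmetry only shows the restriction lies in the image of the pullback map $\CH^*_{\GL_2\times G} \to \CH^*_{\GL_2\times\Gm\times\Gm}$ --- it does not show the lift is unique. By Lemma~\ref{lem:norm-map-and-presentations}, this pullback kills $\gamma$ (and $2\gamma = 0$), so its kernel contains the entire $2$-torsion ideal generated by $\gamma$. Consequently your argument determines $\Psi$ only modulo terms of the form $m\alpha_1\gamma + n\beta_1\gamma + p\gamma x_2 + q\gamma x_1$ with coefficients mod $2$, and it cannot rule out their presence.

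This is precisely why the paper's proof has a second half: after the computation you describe (which pins down the coefficients $a,\dots,l$ of all $\gamma$-free monomials), it pulls $\Psi$ back to $\CH^*_{\GL_2\times\Z/2\Z}$ --- where $\beta_1 \mapsto \gamma$, $\beta_2 \mapsto 0$, so $\gamma$-terms survive --- and restricts along the diagonal embedding $\Pj V_2 \times \Pj(V_1\boxtimes L_{-2}) \hto \Pj V_2 \times [\Pj(V_1\boxtimes W_{-2})\sm I]$, on which $\psi$ becomes the squaring map $\sq \colon \Pj V_1 \to \Pj V_2$. Comparing with the independently computed class of the graph of $\sq$ (via Lemma~\ref{lem:squaring-map}) forces $m = n = p = q = 0$. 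To repair your proof you need to add such a step (or some other argument seeing $2$-torsion, e.g.\ a restriction to a locus fixed by the $\Z/2\Z$); the statement you are proving happens to have no $\gamma$-terms, but nothing in your argument establishes that.
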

\begin{proof}
  Since the graph of \( \psi \) has codimension 2, we may write
  \begin{align*}
    \Psi &= a\alpha_2 + b\beta_2 + c\alpha_1^2 + d\beta_1^2 + e\alpha_1\beta_1 + f\alpha_1x_2 + g\beta_1x_2 + h\alpha_1x_1\\
    &\phantom{= .\!}
    + i\beta_1x_1 + jx_1x_2 + kx_2^2 + lx_1^2 + m\alpha_1\gamma + n\beta_1\gamma + p\gamma x_2 + q\gamma x_1 + r \gamma^2
  \end{align*}
  for some undetermined coefficients \( a,\ldots,r \in \Z \).  Using the relation $\gamma^2 + \beta_1 \gamma = 0$, we assume $r = 0$.
  We will determine most of these coefficients by pulling back to \( \CH^*_{\GL_2 \times \Gm \times \Gm}\pbig{\Pj V_2 \times \Pj(V_1 \boxtimes W_{-2})} \).
  By Lemma~\ref{lem:norm-map-and-presentations}, the pullback $\Psi_{\GL_2 \times \Gm \times \Gm}$ has class
  \begin{equation}\label{eq:omega-gl2-gm-gm-undetermined}
    \begin{split}
      \Psi_{\GL_2 \times \Gm \times \Gm} & =
      a\alpha_2 + bt_1t_2 + c\alpha_1^2 + d(t_1 + t_2)^2 + e\alpha_1(t_1 + t_2) + f\alpha_1x_2 + g(t_1 + t_2)x_2\\
      & \phantom{= .\!}
      + h\alpha_1x_1 + i(t_1 + t_2)x_1 + jx_1x_2 + kx_2^2 + lx_1^2.
    \end{split}
  \end{equation}
  Now, using the commutative squares of Chow rings induced by the pullback square
  \[
    \begin{tikzcd}
      {\Pj(V_1 \boxtimes W_{-2}) \sm I}
       \ar[rr, "\br{\psi,\id}"]
      \ar[d, "\phi"]
      \pb
      &
      &
      \Pj V_2 \times [\Pj(V_1 \boxtimes W_{-2}) \sm I]
      \ar[d, "\id \times \phi"]
      \\
      \Pj V_1 \times \Pj V_1
      \ar[r, "\Delta"]
      &
      (\Pj V_1 \times \Pj V_1) \times (\Pj V_1 \times \Pj V_1)
      \ar[r, "\mul \times \id"]
      &
      \Pj V_2 \times (\Pj V_1 \times \Pj V_1),
    \end{tikzcd}
  \]
  $\Psi_{\GL_2 \times \Gm \times \Gm}$ can be described as the pullback under
  \begin{equation}\label{eq:b-minus-d00-part-2-idphi}
    \id \times \phi \colon
    \Pj V_2 \times [\Pj(V_1 \boxtimes W_{-2}) \sm I]
    \to
    \Pj V_2 \times (\Pj V_1 \times \Pj V_1)
  \end{equation}
  of the pushforward of the bi-diagonal \( [\Delta_{13}] \cdot [\Delta_{24}] \) under
  \[
    (\Pj V_1 \times \Pj V_1)
    \times
    (\Pj V_1 \times \Pj V_1)
    \tox{\mul \times \id}
    \Pj V_2
    \times
    (\Pj V_1 \times \Pj V_1).
  \]
  By Lemma~\ref{lem:diagonal-class}, we have
  \begin{align*}
    [\Delta_{13}] \cdot [\Delta_{24}]
    & = x_1x_2 + x_1x_4 + x_2x_3 +x_3x_4 -\alpha_1(x_1 + x_2 + x_3 + x_4) + \alpha_1^2\\
    & = x_1x_2 + x_1(x_4-\alpha_1) + x_2(x_3-\alpha_1) +x_3x_4 -\alpha_1(x_3 + x_4) + \alpha_1^2\\
    & = u_1^1 \cdot u_2^1 + (u_1^1 \cdot u_2^0)(x_4-\alpha_1)
    + (u_1^0 \cdot u_2^1)(x_3 - \alpha_1) + (u_1^0 \cdot u_2^0)(x_3x_4 -\alpha_1(x_3 + x_4) + \alpha_1^2).
  \end{align*}
  Pushing this forward along \( \mul \times \id \) using Lemma~\ref{lem:pushforward-sij} gives
  \begin{align*}
    [\Delta_{24}] \cdot [\Delta_{13}]
    & \mapsto u_1^2 + u_1^1(x_3-\alpha_1) + u_1^1(x_2-\alpha_1) + 2u_1^0(x_2x_3 -\alpha_1(x_2 + x_3) + \alpha_1^2)\\
    & = u_1^2 + u_1^1(x_2 + x_3 -2\alpha_1) + 2u_1^0(x_2-\alpha_1)(x_3 -\alpha_1).
  \end{align*}
  Finally, pulling back along \eqref{eq:b-minus-d00-part-2-idphi} by making the substitutions $x_2 \mapsto x_2 - 2t_1$ and $x_3 \mapsto x_2 - 2 t_2$ as per Lemma~\ref{lem:hyperplane-t2-twist}, and using the values
  \begin{equation}\label{eq:uij-on-pv2-values}
    u_1^0 = 1,
    \qquad
    u_1^1 = x_1,
    \quad
    \text{and}
    \quad
    u_1^2 = (x_1 - \alpha_1) x_1 + 2\alpha_2
  \end{equation}
  from Lemma~\ref{lem:uij-formulas}, we obtain
  \begin{align*}
    \Psi_{\GL_2 \times \Gm \times \Gm} & =
    (x_1-\alpha_1)x_1 +2\alpha_2 +x_1(2x_2 -2\alpha_1 - 2t_1 -2t_2) +2(x_2-\alpha_1 - 2t_1)(x_2-\alpha_1-2t_2)\\
    & = x_1^2 -3\alpha_1 x_1 + 2\alpha_2 + 2x_1x_2 -2x_1(t_1 + t_2) + 2x_2^2 -4\alpha_1 x_2 \\
    & \phantom{= .\!} + 2\alpha_1^2 -4x_2(t_1 + t_2) +4\alpha_1(t_1 + t_2) +8t_1t_2.
  \end{align*}
  Comparing this with \eqref{eq:omega-gl2-gm-gm-undetermined}, we obtain $a = 2$, $b = 8$, $c = 2$, $d = 0$, $e = 4$, $f = -4$, $g = -4$, $h = -3$, $i = -2$, $j = 2$, $k = 2$, and $l = 1$.
  In other words,
  \begin{align*}
    \Psi & =
    2\alpha_2 + 8\beta_2 + 2\alpha_1^2 + 4\alpha_1\beta_1 -4\alpha_1 x_2 -4\beta_1 x_2 -3\alpha_1 x_1\\
    & \phantom{= .\!}
    -2\beta_1 x_1 + 2 x_1 x_2 + 2 x_2^2 + x_1^2 + m\alpha_1\gamma + n\beta_1\gamma + p\gamma x_2 + q\gamma x_1.
  \end{align*}

  To determine \( m \), \( n \), \( p \), \( q \), we pull \( \Psi \) back to \( \CH^*_{\GL_2 \times \Z/2\Z} \).
  Under this pullback, the representation \( W_1 \) of \( G \) pulls back to the trivial representation plus the sign representation of \( \Z/2\Z \), and hence we have \( \beta_1 \mapsto \gamma \) and \( \beta_2 \mapsto 0 \). Thus
\begin{equation}\label{eq:omega-gl2-z2-undetermined}
  \begin{split}
    \Psi_{\GL_2 \times \Z / 2\Z}
    &=
    2\alpha_2 + 2\alpha_1^2 + 4\alpha_1\gamma - 4\alpha_1 x_2 - 4 \gamma x_2 - 3\alpha_1x_1\\
    &\phantom{= .\!}
    -2 x_1 \gamma + 2 x_1 x_2 + 2 x_2^2 + x_1^2
    + m \alpha_1 \gamma + n \gamma^2 + p \gamma x_2 + q \gamma x_1
    \\ &=
    2\alpha_2 + 2\alpha_1^2 - 4 \alpha_1 x_2
    - 3 \alpha_1 x_1 + 2 x_1 x_2 + 2 x_2^2 + x_1^2
    \\
    &\phantom{= .\!}
    + m\alpha_1 \gamma + n\gamma^2 + p\gamma x_2 + q \gamma x_1,
    \end{split}
  \end{equation}
  where in the last equality we have used that \( 2\gamma = 0 \).
Consider the pullback of \( \Psi_{\GL_2 \times \Z / 2\Z} \) along the \( \GL_2 \times \Z / 2\Z \)-equivariant inclusion \( f \colon \Pj V_2 \times \Pj(V_1 \boxtimes L_{-2}) \to \Pj V_2 \times [\Pj(V_1 \boxtimes W_{-2}) \sm I] \) induced by the diagonal embedding \( L_{-2} \to W_{-2} \).
  Since \( f^*x_i = x_i \) for \( i = 1,2 \), we have the same expression for \( f^* \Psi_{\GL_2 \times \Z / 2\Z} \) as the one given for \( \Psi_{\GL_2 \times \Z / 2\Z} \) above.

  Note that $\psi \colon \Pj(V_1 \boxtimes W_{-2}) \sm I \to \Pj V_2$ restricts to the squaring map $\sq \colon \Pj V_1 \to \Pj V_2$ on $\Pj(V_1 \boxtimes L_{-2})$, and hence $f^* \Psi_{\GL_2 \times \Z / 2\Z}$ is the class of the graph of \( \sq \).
  Working as above, we may compute this class as the pullback under $\Pj V_2 \times \Pj (V_1 \boxtimes L_{-2}) \toi \Pj V_2 \times \Pj V_1$ of the pushforward of the diagonal $[\Delta]$ under the squaring map
  \[
    \sq \times \id \colon \Pj V_1 \times \Pj V_1 \rightarrow \Pj V_2 \times \Pj V_1.
  \]
  By Lemma~\ref{lem:diagonal-class}, we have
  \(
  [\Delta]
  = x_1 + x_2 - \alpha_1
  = u_1^1 + u_1^0(x_2 - \alpha_1).
  \)
  Using Lemma~\ref{lem:squaring-map} and the values of the \( u_1^i \) from \eqref{eq:uij-on-pv2-values}, we obtain
  \begin{align*}\label{eq: class of graph}
    (\sq \times \id)_*[\Delta] & =
    u_1^2 - 2\alpha_2 + (2 u_1^1 - 2\alpha_1)(x_2 - \alpha_1) \\
    & =
    x_1^2 -3\alpha_1 x_1 + 2 x_1 x_2 - 2\alpha_1 x_2 + 2\alpha_1^2.
  \end{align*}
  Comparing this with \eqref{eq:omega-gl2-z2-undetermined}, we obtain \( m,n,p,q=0 \), as required, since \( x_2^2 - \alpha_1 x_2 + \alpha_2 = 0 \) by the projective bundle formula \eqref{eq:proj-bdl}.
\end{proof}

We return to our task of pushing forward the fundamental class along \eqref{eq:b-minus-d00-first-factor} and \eqref{eq:b-minus-d00-second-factor}.
Writing
\[
  \pi_2 \colon \Pj V_1 \times \pbig{\Pj V_2 \times [ \Pj(V_1 \boxtimes W_{-2}) \sm I]}
  \to
  \Pj V_2 \times [ \Pj(V_1 \boxtimes W_{-2}) \sm I]
\]
for the projection map, the pushforward of the fundamental class along \eqref{eq:b-minus-d00-first-factor} is $\pi_2^*\Psi$.
By Lemma~\ref{lem:class-of-graph}, this is given by
\[
  \pi_2^*\Psi =
  2\alpha_2 + 8\beta_2 + 2\alpha_1^2 + 4\alpha_1\beta_1 -4\alpha_1x_3 -4\beta_1x_3 -3\alpha_1x_2
  -2\beta_1x_2 + 2 x_2 x_3 + 2x_3^2 + x_2^2.
\]
To push this forward along the multiplication map \eqref{eq:b-minus-d00-second-factor}, we first re-express the powers of \( x_2 \) in terms of \( u_2^i \) using that \( u_2^0 = 1 \), \( u_2^1 = x_2 \), and \( u_2^2 = (x_2 - \alpha_1) x_2 + 2\alpha_2 \) as in \eqref{eq:uij-on-pv2-values}.
We obtain
\begin{align*}
    \pi_2^*\Psi = & u_2^2 + u_2^1(2x_3 -2\beta_1 -2\alpha_1) + u_2^0(8\beta_2 + 2\alpha_1^2 + 4\alpha_1\beta_1 -4\alpha_1x_3 -4\beta_1x_3 + 2x_3^2).
\end{align*}
Pushing \( \pi_2^*\Psi = u_1^0 \cdot \pi_2^*\Psi \) forward under \eqref{eq:b-minus-d00-second-factor} using Lemma~\ref{lem:pushforward-sij} then gives
\begin{align*}
  u_1^0 \cdot \pi_2^* \Psi & =
  u_1^0 \cdot \sqbig{u_2^2 + u_2^1(2x_3 -2\beta_1 -2\alpha_1) + u_2^0(8\beta_2 + 2\alpha_1^2 + 4\alpha_1\beta_1 -4\alpha_1x_3 -4\beta_1x_3 + 2x_3^2) } \\
  & \mapsto
  u_1^2 + 2u_1^1(2x_2 -2\beta_1 -2\alpha_1) + 3u_1^0(8\beta_2 + 2\alpha_1^2 + 4\alpha_1\beta_1 -4\alpha_1x_2 -4\beta_1x_2 + 2x_2^2).
\end{align*}
Finally, we pull back along the projectivization by substituting \( x_2 \mapsto -\alpha_1 \), and by using \eqref{eq:uij-on-pv3-values} for the values of the \( u_1^i \):
\[
\begin{split}
  &\phantom{= .}
  3\alpha_2 + 2\alpha_1(-2\alpha_1 -2\beta_1 -2\alpha_1) + 3(8\beta_2 + 2\alpha_1^2 + 4\alpha_1\beta_1 +4\alpha_1^2 +4\alpha_1\beta_1 + 2\alpha_1^2)\\
  & =
  3\alpha_2 + 24\beta_2 + 16\alpha_1^2 + 20\alpha_1\beta_1
  .
  \end{split}
\]
This does not give us a new relation for \( \CH^*(B \sm \Delta_{00}) \), as it is implied by the relation found in the previous section.

\subsection{The Chow ring of \texorpdfstring{\boldmath \( B \sm \pi^{-1}(\Delta_{0}) \)}{B \ π⁻¹(Δ₀)}}
By Lemma~\ref{lem:loci-in-b}, $B \sm \pi^{-1}(\Delta_{0})$ is the locus of $(H,\lin_1,\lin_2)$ in which one of the $\lin_i$ divides $H$ or $H$ has a double root.
The latter locus we have already excised above.
Thus it remains to excise the former.

Arguing as in \eqref{eq:norm-pb-square} in \S\ref{subsec:b-excisions-d000}, this can be described as the norm from $\Gm \times \Gm$ to $G$ of the locus where a chosen linear factor divides the cubic.
After passing to projectivizations, the latter is the pullback under
\begin{equation}\label{eq:b-excisions-d0-id-phi}
  \id \times \phi \colon \Pj V_3 \times \Pj(V_1 \boxtimes W_{-2}) \sm I \rightarrow \Pj V_3 \times (\Pj V_1 \times \Pj V_1)
\end{equation}
of the image of
\[
  \theta \colon \Pj V_2 \times \Pj V_1 \times \Pj V_1\rightarrow \Pj V_3 \times \Pj V_1 \times \Pj V_1
\]
defined by $(f,g,h) \rightarrow (fg, g, h)$.
Arguing as in \eqref{eq:id-times-phi-pb-square} in \S\ref{subsec:b-excisions-d000}, the sought-after ideal in $\CH_{\GL_2 \times \Gm \times \Gm}^*$ is $(\id \times \phi)^*\im(\theta_*)$.
Since pullback along $\theta$ is surjective, the ideal $\im(\theta_*)$ is generated by the pushforward of the fundamental class.

The map $\theta$ factors through a diagonal map on the middle $\Pj V_1$ factor
\begin{equation}\label{eq:b-excisions-d0-diag}
  \id \times \Delta \times \id
  \colon
  \Pj V_2 \times \Pj V_1 \times \Pj V_1
  \to
  \Pj V_2 \times (\Pj V_1 \times \Pj V_1) \times \Pj V_1
\end{equation}
followed by the multiplication map
\begin{equation}\label{eq:b-excisions-d0-mult}
  \mul \times \id \times \id \colon
  (\Pj V_2 \times \Pj V_1) \times \Pj V_1 \times \Pj V_1 \rightarrow \Pj V_3 \times \Pj V_1 \times \Pj V_1.
\end{equation}
The pushforward of the fundamental class along \eqref{eq:b-excisions-d0-diag} is $[\Delta_{23}] = x_2 + x_3 - \alpha_1$ by Lemma~\ref{lem:diagonal-class}.
To pushforward along \eqref{eq:b-excisions-d0-mult}, we use Lemma \ref{lem:pushforward-sij}:
\[
  [\Delta_{23}] = u_1^0 \cdot \pbig{u_2^1 + u_2^0 \cdot (x_3 - \alpha_1)}
  \mapsto u_1^1 + 3 u_1^0(x_3 - \alpha_1).
\]
To pull back along \eqref{eq:b-excisions-d0-id-phi} and pull back from the projectivization, we substitute \( x_3 \mapsto - \alpha_1 - 2t_2 \) by Lemmas~\ref{lem:hyperplane-t2-twist}~and~\ref{lem:proj-pb-formula}, and use Lemma~\ref{lem:uij-formulas} to plug in for the \( u_1^i \).
This yields
\[
  u_1^1 + 3 u_1^0(x_3 - \alpha_1) \mapsto
  \alpha_1 + 3(-\alpha_1 - 2t_2 - \alpha_1) = -5\alpha_1 -6t_2.
\]
Pushing this and its product with \( t_1 \) forward along the norm map from  \( \Gm \times \Gm \) to \( \B G \) using Lemma~\ref{lem:norm-map-and-presentations} as in \eqref{eq:B-minus-d000-norm-push}, we get
\begin{align*}
  -5\alpha_1 -6t_2
  & \mapsto
  -10\alpha_1 - 6(\beta_1 +\gamma) = -10\alpha_1 - 6\beta_1 \\
  t_1(-5\alpha_1 -6t_2)
  & \mapsto
  -5\alpha_1(\beta_1 +\gamma) - 6(2\beta_2) = -5\alpha_1\beta_1 +\alpha_1\gamma - 12\beta_2.
\end{align*}
Using Lemma~\ref{lem:alpha-to-lambda-sub}, these give the two relations for \( \CH^*\pbig{ B \sm \pi\I(\Delta_0) } \) in Proposition~\ref{propn:biell-summary}.

\section{Excising \texorpdfstring{$\Delta_0 \sm \Delta_{01}$}{Δ₀ \ Δ₀₁}}\label{sec:excising-delta0}
As explained in the introduction, having excised \( \Delta_{001} \) and \( \Delta_{01} \sm \Delta_{001} \) from \( \Mb_2 \), we will now excise \( \Delta_{0^r} \sm (\Delta_{0^{r+1}} \cup \Delta_{0^r 1}) \), for \( r = 3,2,1 \), where \( 0^r \) denotes a string of \( r \) zeroes, and where by convention, we set \( \Delta_{0000} = \Delta_{0001} = \emptyset \).

The starting point of our computation is the computation in \cite[Lemmas~3.3,~3.9]{vistoli-chow-ring-of-m2} of the images of the first maps in the excision sequences
\begin{equation}\label{eq:vistolis-sequence}
  \CH^*\big(\Delta_{0^r} \sm (\Delta_{0^{r+1}} \cup \Delta_{0^r 1})\big)
  \to
  \CH^*\pbig{\Mb_2 \sm (\Delta_{0^{r+1}} \cup \Delta_1)}
  \to
  \CH^*\pbig{\Mb_2 \sm (\Delta_{0^r} \cup \Delta_1)}
  \to
  0,
\end{equation}
which for $r = 1$ yields $\CH^*(\M_2)$.
This computation proceeds entirely inside of $\Mb_2 \sm \Delta_1$, which admits a nice quotient presentation, which we will recall shortly.
Our task will then be to ``lift'' the result of these computations to \( \Mb_2 \sm \Delta_{01} \), where such a quotient presentation is not available.
This will allow us to compute the corresponding image in the excision sequences
\begin{equation}\label{eq:our-sequence}
  \CH^*\pbig{\Delta_{0^r} \sm (\Delta_{0^{r+1}} \cup \Delta_{0^r 1})}
  \to
  \CH^*\pbig{\Mb_2 \sm (\Delta_{0^{r+1}} \cup \Delta_{01})}
  \to
  \CH^*\pbig{\Mb_2 \sm (\Delta_{0^r} \cup \Delta_{01})}
  \to
  0,
\end{equation}
which for $r = 1$ yields $\CH^*(\M_2^\ct)$.

We recall from \cite{vistoli-chow-ring-of-m2} or \cite{larson-chow-m2} that \( \Mb_2 \sm \Delta_1 \) admits a presentation as an open substack of \( V_6(2) / \GL_2 \) by tracking the branch points of the hyperelliptic map \( C \to \Pj^1 \) of a genus 2 curve \( C \).
Specifically, \( \Mb_2 \sm \Delta_1 \subset V_6(2) / \GL_2 \) is the complement of the locus of polynomials with triple roots.

With respect to this presentation, the substack \( \Delta_{0^r} \sm (\Delta_{0^{r+1}} \cup \Delta_{0^r 1}) \subset \Mb_2 \sm (\Delta_{0^{r+1}} \cup \Delta_1)\) for \( r = 1,2,3 \) is given by the locus of polynomials with exactly $r$ double roots.
Note that this is the pre-image under $\pi_\Pj \colon \Mb_2 \sm (\Delta_{0^{r+1}} \cup \Delta_1) \to \Pj V_6$ of the image of the squaring and multiplication map
\[
  \pi_r = \mul \circ (\sq \times \id) \colon \Pj V_r \times \Pj V_{6 - 2r}  \to \Pj V_6.
\]
We thus have a pullback square
\begin{equation}\label{eq:pv6-pb-square}
  \begin{tikzcd}
    \Delta_{0^r} \sm (\Delta_{0^{r+1}} \cup \Delta_{0^r 1}) \ar[d, "", hookrightarrow] \ar[r, "\pi_{\Pj}"] \pb &
    (\Pj V_r \times \Pj V_{6 - 2r}) / \GL_2
    \ar[d, "\pi_r"] \\
    \Mb_2 \sm (\Delta_{0^{r+1}} \cup \Delta_{1}) \ar[r, "\pi_{\Pj}"] &
    \Pj V_6 / \GL_2.
  \end{tikzcd}
\end{equation}
We will be focused on the following particular classes:
\begin{defn}\label{defn:beta-alpha-ri}
  Let \( \xi_r \in \CH^*_{\GL_2}(\Pj V_r \times \Pj V_{6 - 2r}) \) denote the pullback along the first projection map of the hyperplane class on \( \Pj V_r \).
  For \( 1 \le r \le 3 \) and \( 0 \le i \le r \), we define
  \[
    \beta_{ri} \in \CH^*\big(\Mb_2 \sm (\Delta_{0^{r + 1}} \cup \Delta_{0^r 1})\big)
  \]
  to be the pushforward of \( \pi_\Pj^* (\xi_r)^i \in \CH^*\pbig{\Delta_{0^r} \sm (\Delta_{0^{r+1}} \cup \Delta_{0^r 1})} \) along the inclusion
  \[\Delta_{0^r} \sm (\Delta_{0^{r+1}} \cup \Delta_{0^r 1}) \hto \Mb_2 \sm (\Delta_{0^{r + 1}} \cup \Delta_{0^r 1}).\]
  We also write
  \begin{align*}
    \alpha_{ri} &\in \CH^*\big(\Mb_2 \sm (\Delta_{0^{r + 1}} \cup \Delta_1)\big) \\
    \bar{\beta}_{ri} &\in \CH^*\big(\Mb_2 \sm (\Delta_{0^{r + 1}} \cup \Delta_{01})\big)
  \end{align*}
  for the restrictions of \( \beta_{ri} \) to \( \Mb_2 \sm (\Delta_{0^{r+1}} \cup \Delta_1) \) and \(\CH^*\big(\Mb_2 \sm (\Delta_{0^{r + 1}} \cup \Delta_{01})\big)\) respectively.

  Note that since \( \xi_r^0 = 1 \) for each \( r \), it follows that \( \beta_{r0} \) is simply the fundamental class of \( \Delta_{0^r} \sm (\Delta_{0^{r+1}} \cup \Delta_{0^r1}) \) in \( \Mb_2 \sm (\Delta_{0^{r+1}} \cup \Delta_{0^r1}) \).
\end{defn}

\noindent
Our definition of the \(\alpha_{ri}\) is compatible with the one in \cite{vistoli-chow-ring-of-m2}, so we have:
\begin{lem}[{\cite[Lemmas~3.3,~3.9]{vistoli-chow-ring-of-m2}}]\label{lem:vistolis-formulas}
  The classes \(\alpha_{ri} \in \CH^*\big(\Mb_2 \sm (\Delta_{0^{r + 1}} \cup \Delta_1)\big)\) generate the image of \( \CH^*\pbig{\Delta_{0^r} \sm (\Delta_{0^{r+1}} \cup \Delta_{0^r 1})} \to \CH^*\pbig{\Mb_2 \sm (\Delta_{0^{r+1}} \cup \Delta_1)} \) as an ideal, and they are given by the following formulas:
  \begin{align*}
    \alpha_{30} &= - 24\lambda_1^3 + 128\lambda_1\lambda_2 = 0 &
    \alpha_{20} &= -12\lambda_1^2+144\lambda_2 &
    \alpha_{10} &= 10\lambda_1 \\
    \alpha_{31} &= 24\lambda_1^4-128\lambda_1^2\lambda_2 = 0 &
    \alpha_{21} &= 24\lambda_1^3 - 168\lambda_1\lambda_2 = 0&
    \alpha_{11} &= 2\lambda_1^2-24\lambda_2 \\
    \alpha_{32} &= - 24\lambda_1^5 + 128\lambda_1^3\lambda_2 = 0 &
    \alpha_{22} &= -24\lambda_1^4+148\lambda_1^2\lambda_2 = 0 \\
    \alpha_{33} &= 24\lambda_1^6-128\lambda_1^4\lambda_2 = 0. &
  \end{align*}
\end{lem}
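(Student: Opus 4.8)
The statement has two parts: that the classes \(\alpha_{ri}\) generate the image of the pushforward as an ideal, and the explicit formulas for them. For the generation statement, the plan is to exploit the pullback square \eqref{eq:pv6-pb-square} together with the compatibility of flat pullback and projective pushforward \eqref{eq:pb-square-ch-commutes}. Writing \(\iota\) for the inclusion \(\Delta_{0^r}\sm(\Delta_{0^{r+1}}\cup\Delta_{0^r1})\hookrightarrow\Mb_2\sm(\Delta_{0^{r+1}}\cup\Delta_1)\), that square identifies \(\beta_{ri}=\iota_*(\pi_\Pj^*\xi_r^i)\) with \(\pi_\Pj^*(\pi_{r,*}\,\xi_r^i)\), so that \(\alpha_{ri}\) is the restriction of the pushforward \(\pi_{r,*}(\xi_r^i)\in\CH^*_{\GL_2}(\Pj V_6)\). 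By the push--pull principle of \S\ref{subsec:push-pull}, once we know that \(\{\xi_r^i\}_{i=0}^r\) generates \(\CH^*(\Delta_{0^r}\sm\ldots)\) as a module over \(\CH^*(\Mb_2\sm(\Delta_{0^{r+1}}\cup\Delta_1))\) (via \(\iota^*\)), it follows that the classes \(\iota_*(\xi_r^i)=\beta_{ri}\), hence the \(\alpha_{ri}\), generate the image as an ideal.

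For the module-generation input, I would argue as follows. Since \(\Delta_{0^r}\sm\ldots\) is an open substack of the double projective bundle \((\Pj V_r\times\Pj V_{6-2r})/\GL_2\), restriction gives a surjection onto its Chow ring from \(\CH^*(\B\GL_2)[x_1,x_2]/(\ldots)\), which is generated over the image of \(\CH^*(\B\GL_2)=\Z[\alpha_1,\alpha_2]\) by the monomials \(x_1^ax_2^b\). To eliminate \(x_2\) in favour of powers of \(\xi_r=x_1\), observe that \(\pi_r^*\) of the hyperplane class \(\zeta\) on \(\Pj V_6\) equals \(2x_1+x_2+c\,\alpha_1\) for some integer \(c\), since \(\oo_{\Pj V_6}(1)\) pulls back under \(\mul\circ(\sq\times\id)\) to \(\oo(2)\boxtimes\oo(1)\) up to a \(\det V\)-twist. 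By commutativity of \eqref{eq:pv6-pb-square} this gives \(\iota^*(\pi_\Pj^*\zeta)=2x_1+x_2+c\,\alpha_1\), so \(x_2=\iota^*(\pi_\Pj^*\zeta)-2x_1-c\,\alpha_1\) lies in the \(\Z[\alpha_1,\alpha_2][x_1]\)-span of pullback classes; every monomial can thus be rewritten using only powers of \(x_1\), and the relation cutting out \(\Pj V_r\) bounds these to \(0\le i\le r\).

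For the explicit formulas, the plan is to verify that our definition of \(\alpha_{ri}\) — the restriction of \(\pi_{r,*}(\xi_r^i)\) for the squaring–multiplication map \(\pi_r=\mul\circ(\sq\times\id)\) — coincides with the one in \cite[\S3]{vistoli-chow-ring-of-m2}, and then to transcribe his values into the generators \(\lambda_1,\lambda_2\). As a self-contained check in the base case \(r=1\), one reads off \(\alpha_{10},\alpha_{11}\) directly: convert \(\xi_1^i\) to the \(u_1^i\)-basis via Lemma~\ref{lem:uij-formulas}, push forward \(\sq\colon\Pj V_1\to\Pj V_2\) by Lemma~\ref{lem:squaring-map}, then \(\mul\colon\Pj V_2\times\Pj V_4\to\Pj V_6\) by Lemma~\ref{lem:pushforward-sij}, and finally substitute the relation between \(\alpha_1,\alpha_2\) and \(\lambda_1,\lambda_2\) on \(\Mb_2\sm\Delta_1\). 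The vanishings recorded as ``\(=0\)'' (all \(\alpha_{3i}\), together with \(\alpha_{21},\alpha_{22}\)) are then identities in the respective rings \(\CH^*(\Mb_2\sm(\Delta_{0^{r+1}}\cup\Delta_1))\), following from the relations in Vistoli's presentation.

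The \emph{main obstacle} is twofold. First, for \(r=2,3\) a fully direct recomputation would require the pushforward along the squaring map \(\sq\colon\Pj V_r\to\Pj V_{2r}\), which is not among the lemmas recalled here (only the case \(r=1\) is), so one must either derive it by factoring \(f\mapsto f^2\) through \((\Pj V_1)^r\) and symmetrizing — which introduces \(r!\)-type denominators that need care over \(\Z\) — or instead rely on Vistoli's computation. Second, matching our projective-bundle conventions and chosen generating set with Vistoli's, and correctly re-expressing his output in the basis \(\lambda_1,\lambda_2\), is where the real bookkeeping risk lies; the generation statement itself, by contrast, is routine once the pullback square and push--pull are in hand.
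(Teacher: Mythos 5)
Your proposal is correct and matches the paper's treatment: the paper does not prove this lemma internally but imports both the generation statement and the formulas from Vistoli's Lemmas~3.3 and~3.9, its only added content being the remark that its definition of the \( \alpha_{ri} \) via the pullback square \eqref{eq:pv6-pb-square} is compatible with Vistoli's, which is exactly the identification \( \alpha_{ri} = \pi_\Pj^*(\pi_{r,*}\xi_r^i) \) you make. Your ideal-generation argument (projective bundle formula to get module generators, then push--pull across the cartesian square) is precisely the argument the paper gives for the parallel statement in Lemma~\ref{lem:beta_ri-setup}, and deferring the explicit \( r=2,3 \) pushforward computations to Vistoli, as you do, is what the paper itself does.
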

Thus, using the sequences \eqref{eq:vistolis-sequence}, the rings \( \CH^*\pbig{\Mb_2 \sm (\Delta_{0^r} \cup \Delta_1)} \) are completely determined by the \( \alpha_{ri} \).
Similarly, by Lemma~\ref{lem:beta_ri-setup} below, and the sequences \eqref{eq:our-sequence}, the rings \( \CH^*\big(\Mb_2 \sm (\Delta_{0^r} \cup \Delta_{01})\big)\) are completely determined by the \( \beta_{ri} \); see \S\ref{subsec:conclusion}.

\begin{lem}\label{lem:beta_ri-setup}
  The images of
  \( \CH^*\pbig{\Delta_{0^r} \sm (\Delta_{0^{r+1}} \cup \Delta_{0^r1})} \to \CH^*\pbig{\Mb_2 \sm (\Delta_{0^{r+1}} \cup \Delta_{0^r1})} \)
  respectively
  \( \CH^*\pbig{\Delta_{0^r} \sm (\Delta_{0^{r+1}} \cup \Delta_{0^r1})} \to \CH^*\pbig{\Mb_2 \sm (\Delta_{0^{r+1}} \cup \Delta_{01})} \)
  are generated as an ideal by the \( \beta_{ri} \) respectively the \(\bar{\beta}_{ri}\).
  Moreover, the classes \( \beta_{ri} \) satisfy the following conditions.
  \begin{enumerate}[(i)]
  \item\label{item:beta_ri-setup-alpha_ri} Each \( \beta_{ri} \) maps to \( \alpha_{ri} \) under the restriction map
  \[\CH^*\pbig{\Mb_2 \sm (\Delta_{0^{r+1}} \cup \Delta_{0^r 1})} \to \CH^*\pbig{\Mb_2 \sm (\Delta_{0^{r+1}} \cup \Delta_{1})}.\]
  \item\label{item:beta_ri-setup-disjoint} Each \( \beta_{ri} \) maps to \( 0 \) under the restriction map
  \[
  \CH^*\pbig{\Mb_2 \sm (\Delta_{0^{r+1}} \cup \Delta_{0^r 1})}
  \to \CH^*(\Delta_1 \sm \Delta_{0^r1}).
  \]
  \item\label{item:beta_ri-setup-biell} Each \( \beta_{ri} \) maps to \( 0 \) under pullback
  \[
  \pi^* \colon
  \CH^*\pbig{\Mb_2 \sm (\Delta_{0^{r+1}} \cup \Delta_{0^r 1})}
  \to \CH^*(B \sm \pi\I(\Delta_{0^r}))
  \]
  along the map \( \pi \colon B \to \Mb_2 \) constructed in \S\ref{sec:bielliptic}.
  \end{enumerate}
\end{lem}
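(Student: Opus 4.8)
The plan is to reduce the entire lemma to the pullback square \eqref{eq:pv6-pb-square}, the push-pull principle of \S\ref{subsec:push-pull}, and one excision observation. Throughout, abbreviate $Z_r = \Delta_{0^r} \sm (\Delta_{0^{r+1}} \cup \Delta_{0^r 1})$, $U_r = \Mb_2 \sm (\Delta_{0^{r+1}} \cup \Delta_{0^r 1})$, and $V_r = \Mb_2 \sm (\Delta_{0^{r+1}} \cup \Delta_1)$ (the domain of the branch-point presentation), and write $j \colon Z_r \hto U_r$ for the closed immersion, so that $\beta_{ri} = j_*(\pi_\Pj^* \xi_r^i)$ by definition. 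All strata here are smooth, so push-pull applies. I would treat the generation statement first, then the three conditions.

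For the generation statement, by \S\ref{subsec:push-pull} it suffices to show that $\{\pi_\Pj^* \xi_r^i : 0 \le i \le r\}$ generates $\CH^*(Z_r)$ as a module over $\CH^*(U_r)$ via $j^*$, since then $\im(j_*)$ is the ideal generated by the $\beta_{ri}$. The subtlety is that the branch-point map does not extend over $\Delta_1$, so I would factor $j$ through the open substack $V_r \hto U_r$ (open because $\Delta_{0^r 1} \subseteq \Delta_1$, with $Z_r$ closed in $V_r$ since $\Delta_{0^r} \cap \Delta_1 = \Delta_{0^r 1}$). As restriction to an open substack is surjective on Chow groups, the subring $j^* \CH^*(U_r) \subseteq \CH^*(Z_r)$ coincides with the image of $\CH^*(V_r)$, so it is equivalent to prove generation over $\CH^*(V_r)$ — and this is exactly what the square \eqref{eq:pv6-pb-square} computes. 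By the projective bundle formula \eqref{eq:proj-bdl}, $\CH^*\pbig{(\Pj V_r \times \Pj V_{6-2r})/\GL_2}$ is generated over $\CH^*(\B \GL_2)$ by the monomials $\xi_r^i x_2^j$, where $x_2$ is the hyperplane class of the second factor; and because $\pi_r$ factors as a squaring followed by a multiplication, $\pi_r^*$ of the hyperplane class of $\Pj V_6$ equals $2\xi_r + x_2$ up to $\alpha_1$-corrections. Since $\alpha_1, \alpha_2$ and this pullback all lie in $j^* \CH^*(U_r)$, one solves $x_2 \equiv -2\xi_r$ to eliminate $x_2$, then reduces powers $\xi_r^{>r}$ via the degree-$(r+1)$ relation, leaving $1, \xi_r, \ldots, \xi_r^r$ as generators. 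The statement for the $\bar\beta_{ri}$ follows by base change along the open immersion $\Mb_2 \sm (\Delta_{0^{r+1}} \cup \Delta_{01}) \hto U_r$, under which $Z_r$ is unchanged (since $\Delta_{0^r} \cap \Delta_{01} = \Delta_{0^r 1}$), using \eqref{eq:pb-square-ch-commutes}.

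For the three conditions, the crucial point is that $\beta_{ri}$ restricts to $0$ on the open substack $\Mb_2 \sm \Delta_{0^r}$. Indeed, since $\Delta_{0^{r+1}}, \Delta_{0^r 1} \subseteq \Delta_{0^r}$, the open complement of $Z_r$ in $U_r$ is precisely $\Mb_2 \sm \Delta_{0^r}$, and the excision sequence places $\im(j_*)$ — hence $\beta_{ri}$ — in the kernel of restriction there. Condition \ref{item:beta_ri-setup-disjoint} then follows because $\Delta_1 \sm \Delta_{0^r 1} \subseteq \Mb_2 \sm \Delta_{0^r}$ (again using $\Delta_1 \cap \Delta_{0^r} = \Delta_{0^r 1}$), so restriction to $\Delta_1 \sm \Delta_{0^r 1}$ factors through the vanishing restriction. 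Condition \ref{item:beta_ri-setup-biell} is identical in spirit: $\pi$ carries $B \sm \pi\I(\Delta_{0^r})$ into $\Mb_2 \sm \Delta_{0^r}$ by definition, so $\pi^*$ on $\beta_{ri}$ factors as the (vanishing) restriction to $\Mb_2 \sm \Delta_{0^r}$ followed by pullback. Finally, condition \ref{item:beta_ri-setup-alpha_ri} is essentially definitional — $\alpha_{ri}$ was defined as the restriction of $\beta_{ri}$ to $V_r$ — and \eqref{eq:pb-square-ch-commutes} applied to $V_r \hto U_r$ and the closed $Z_r = Z_r \cap V_r$ identifies this restriction with the pushforward computed by the square \eqref{eq:pv6-pb-square}, i.e.\ with Vistoli's class, so that Lemma~\ref{lem:vistolis-formulas} applies.

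The main obstacle is the module-generation step. One must (a) pin down $\pi_r^*$ of the hyperplane class precisely enough to eliminate the second hyperplane class $x_2$, and (b) keep careful track of the stratum containments ($\Delta_{0^{r+1}}, \Delta_{0^r 1} \subseteq \Delta_{0^r}$, and $\Delta_{0^r 1} \subseteq \Delta_1 \cap \Delta_{01}$) so that the excision complements and the open immersion $V_r \hto U_r$ are exactly as claimed; the smoothness of the strata, needed for push-pull, should also be recorded. By contrast, the three conditions are formal consequences of excision and functoriality once these containments are established.
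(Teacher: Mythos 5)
Your proposal is correct and takes essentially the same approach as the paper's (much more compressed) proof: module generation of $\CH^*\pbig{\Delta_{0^r} \sm (\Delta_{0^{r+1}} \cup \Delta_{0^r1})}$ via the projective bundle formula and the square \eqref{eq:pv6-pb-square} — your explicit factorization through the open substack $\Mb_2 \sm (\Delta_{0^{r+1}} \cup \Delta_1)$ and the elimination of the second hyperplane class are exactly the details the paper leaves implicit — then push-pull for ideal generation, property (i) by definition of the $\alpha_{ri}$, and (ii)--(iii) from the vanishing of $\beta_{ri}$ on $\Mb_2 \sm \Delta_{0^r}$ together with the factorization of the maps from $\Delta_1 \sm \Delta_{0^r1}$ and $B \sm \pi\I(\Delta_{0^r})$ through $\Mb_2 \sm \Delta_{0^r}$. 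One small simplification: since the squaring and multiplication maps are $\GL_2$-equivariant, $\pi_r^*$ of the hyperplane class equals $2\xi_r + x_2$ exactly, with no $\alpha_1$-correction needed.
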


\begin{proof}
  By the projective bundle formula, the $\xi_r^i \in \CH^*_{\GL_2}(\Pj V_r \times \Pj V_{6 - 2r})$ for $i = 0, \ldots , r$ generate $\CH^*_{\GL_2}(\Pj V_r \times \Pj V_{6 - 2r})$ as a $\CH^*_{\GL_2}(\Pj V_6)$-module.
  Using \eqref{eq:pv6-pb-square}, we find $\CH^*\pbig{\Delta_{0^r} \sm (\Delta_{0^{r+1}} \cup \Delta_{0^r 1})}$ is generated as a $\CH^*\big(\Mb_2 \sm (\Delta_{0^{r+1}} \cup \Delta_{0^r 1})\big)$-module by their pullbacks $\pi_\Pj^*\xi_r^i$.
  This proves the \(\beta_{ri}\) and \(\bar{\beta}_{ri}\) generate the claimed images.

  Property \ref{item:beta_ri-setup-alpha_ri} is simply the definition of the \( \alpha_{ri} \).
  Properties \ref{item:beta_ri-setup-disjoint}~and~\ref{item:beta_ri-setup-biell} hold since \( \beta_{ri} \) already vanishes on \( \Mb_2 \sm \Delta_{0^r} \), and the maps from \( \Delta_1 \sm \Delta_{0^r1} \) and \( B \sm \pi\I(\Delta_{0^r})\) to \( \Mb_2 \sm (\Delta_{0^{r+1}} \cup \Delta_{0^r 1}) \) both factor through \( \Mb_2 \sm \Delta_{0^r} \).
\end{proof}

\noindent
It now remains to determine the \( \beta_{ri} \).
The rest of this section is devoted to proving the following:
\begin{propn}\label{propn:betaij-values}
  The classes \( \beta_{ri} \) are given as follows:
  \begin{align*}
    \beta_{30} &=
    96 \delta_1 \lambda_2
    &
    \beta_{20} &=
    60 \lambda_1^2 - 12 \delta_1 \lambda_1
    &
    \beta_{10} &=
    10 \lambda_1 - 2 \delta_1
    \\
    \beta_{31} &= 0
    &
    \beta_{21} &=
    24 w_{21} \delta_1 \lambda_2
    &
    \beta_{11} &=
    2\lambda_1^2 - 24\lambda_2 + \delta_1 \lambda_1 - \delta_1^2
    \\
    \beta_{32} &=
    24 w_{32} \delta_1 \lambda_2^2
    &
    \beta_{22} &= 0
    \\
    \beta_{33} &=
    0
  \end{align*}
  for some \(w_{21}, w_{32} \in \set{0,1} \).
\end{propn}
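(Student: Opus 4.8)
The plan is to determine each $\beta_{ri}$ by leveraging the three properties of Lemma~\ref{lem:beta_ri-setup}, working through the cases in order of decreasing $r$ so that the ambient ring is under control at each stage; for $r = 3$ this ambient ring is the fully known $\CH^*(\Mb_2)$. First I would use property~\ref{item:beta_ri-setup-alpha_ri} to separate off the ``known part''. The excision sequence
\[
  \CH^*(\Delta_1 \sm \Delta_{0^r1})
  \tox{j_*}
  \CH^*\pbig{\Mb_2 \sm (\Delta_{0^{r+1}} \cup \Delta_{0^r1})}
  \to
  \CH^*\pbig{\Mb_2 \sm (\Delta_{0^{r+1}} \cup \Delta_1)}
  \to 0
\]
identifies the kernel of the restriction ``away from $\Delta_1$'' with $\im(j_*)$, and since this restriction is surjective, push-pull (\S\ref{subsec:push-pull}) shows $\im(j_*) = (\delta_1)$. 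As $\beta_{ri}$ restricts to Vistoli's class $\alpha_{ri}$ of Lemma~\ref{lem:vistolis-formulas} (which vanishes in all but a few cases), this yields
\[
  \beta_{ri} = \alpha_{ri} + \delta_1 \, c_{ri}
\]
for a correction $c_{ri}$ of codimension $r+i-1$, so the whole problem reduces to pinning down the $\delta_1$-multiple $\delta_1 c_{ri}$.

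To constrain $\delta_1 c_{ri}$ I would use the two remaining properties as probes into rings already computed. Property~\ref{item:beta_ri-setup-disjoint} says $\beta_{ri}$ vanishes in $\CH^*(\Delta_1 \sm \Delta_{0^r1})$; I would evaluate this restriction using $\delta_1 = \gamma - \lambda_1$ on $\Delta_1$ (as in~\eqref{eq:ch-of-delta1}) together with the presentations of Proposition~\ref{propn:delta-summary}. Property~\ref{item:beta_ri-setup-biell} says $\pi^*\beta_{ri} = 0$ in $\CH^*(B \sm \pi\I(\Delta_{0^r}))$; I would evaluate this pullback by substituting Lemma~\ref{lem:alpha-to-lambda-sub} and reducing modulo the relations of Proposition~\ref{propn:biell-summary}. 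Each probe thus yields linear conditions on the coefficients of $c_{ri}$.

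The essential difficulty is that these two probes have a nontrivial joint kernel: a $\delta_1$-multiple can restrict to $0$ on both $\Delta_1$ and $B$ without being zero. Indeed $24\delta_1\lambda_2$ (codimension $3$) and $24\delta_1\lambda_2^2$ (codimension $5$) are exactly such classes --- each is killed on $\Delta_1$, where $24\lambda_1\lambda_2 = 0$, and on $B$ --- which is precisely why the coefficients $w_{21}, w_{32} \in \set{0,1}$ of $\beta_{21}$ and $\beta_{32}$ cannot be fixed by this method. For the same reason the probes cannot by themselves determine the non-torsion part of the correction, such as the integers appearing in the fundamental classes $\beta_{r0} = [\Delta_{0^r}]$ (already for $\beta_{30} = 96\delta_1\lambda_2$ both probes return $0$). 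To pin these down I would compute the stratum classes directly in $\CH^*(\Mb_2)$ --- via the self-intersections of $\Delta_0$ along the clutching maps together with the Grothendieck--Riemann--Roch relations among $\lambda_1,\lambda_2,\delta_1$ flagged in~\S\ref{sec:methods} --- and then feed the results through the excision.

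The crux is therefore the degree-by-degree control of this joint probe-kernel: one must confirm that, once the direct computation fixes the non-torsion coefficients, the \emph{only} surviving ambiguities are the two $2$-torsion classes above, giving the undetermined $w_{21}, w_{32}$. Reassuringly, both of these classes become zero once $\delta_1$ is further constrained in $\CH^*(\M_2^\ct)$, so the value of $w_{21}, w_{32}$ does not affect the final assembly in~\S\ref{subsec:conclusion}.
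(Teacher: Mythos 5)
Your skeleton matches the paper's proof almost exactly: the paper also uses the three properties of Lemma~\ref{lem:beta_ri-setup} as probes, writes each \(\beta_{ri}\) as Vistoli's class plus an element of the kernel \((\delta_1)\) of restriction to \(\Mb_2 \sm (\Delta_{0^{r+1}} \cup \Delta_1)\), intersects this with the kernels of restriction to \(\Delta_1 \sm \Delta_{0^r1}\) (Proposition~\ref{propn:delta-summary}) and of pullback to \(B \sm \pi\I(\Delta_{0^r})\) (Proposition~\ref{propn:biell-summary}), fixes the non-torsion coefficients of \(\beta_{30}\) and \(\beta_{20}\) against Mumford's rational computations, and is left with the \(2\)-torsion ambiguities \(w_{21}, w_{32}\). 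For \(\beta_{3i}\), \(\beta_{2i}\), and \(\beta_{10}\) your argument is essentially the paper's.

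However, there is a genuine gap at \(\beta_{11}\), which is precisely the crux of the paper's proof. Your claim that, after the rational input, ``the \emph{only} surviving ambiguities are the two \(2\)-torsion classes'' \(24\delta_1\lambda_2\) and \(24\delta_1\lambda_2^2\) is false: in degree \(2\) the class \(\delta_1(\delta_1+\lambda_1)\) also lies in the joint probe-kernel --- it is \(2\)-torsion (since \(2\delta_1^2+2\delta_1\lambda_1=0\)), it is a relation in \(\CH^*(\Delta_1 \sm \Delta_{01})\) by \eqref{eq:ch-of-delta1}, and \(\delta_1^2+\delta_1\lambda_1\) is one of the listed relations for \(\CH^*\pbig{B \sm \pi\I(\Delta_0)}\) in Proposition~\ref{propn:biell-summary}. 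Consequently your method only pins down \(\beta_{11} = 2\lambda_1^2 - 24\lambda_2 + (2+b)\delta_1\lambda_1 + b\delta_1^2\) with \(b \in \set{0,-1}\) undetermined; being torsion, \(b\) is invisible to any rational computation of stratum classes (clutching maps, self-intersections, Mumford), so the ``direct computation'' you propose cannot resolve it. Unlike \(w_{21},w_{32}\), this ambiguity \emph{does} change the final answer: \(\CH^*(\M_2^\ct)\) is the quotient by \((\beta_{10},\beta_{11})\), and the paper verifies that the \(b=-1\) class does not lie in the ideal generated by \(\beta_{10}\) and the \(b=0\) class. Resolving \(b\) requires input beyond the three probes: the paper proves that \(\omega_\pi|_{\cS_1}\) is the pullback of the sign representation \(\Gamma\), so that the characteristic-zero Grothendieck--Riemann--Roch identity \(\pi_*\pbig{\ch_1(\omega_\pi)\cdot[\cS_1]}=12\lambda_1^2-24\lambda_2\) can be rewritten as the pushforward of \(\gamma\) from \(\Delta_1 \sm \Delta_{01}\), namely \(\delta_1^2+\delta_1\lambda_1\); this forces the \(b=-1\) class to vanish in \(\CH^*(\M_2^\ct)\), a specialization argument extends this to positive characteristic, and the ideal-membership check then rules out \(b=0\). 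None of this machinery (nor any substitute for it) appears in your proposal, so as written it cannot establish the stated value of \(\beta_{11}\).
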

Note that as \( \beta_{10} \) is the fundamental class of \( \Delta_{0} \) inside of \( \Mb_2 \sm (\Delta_{00} \sm \Delta_{01}) \), its value is already given by Theorem~\ref{thm:chow-M2bar}.

\subsection{\boldmath Computations of the \texorpdfstring{$\beta_{3i}$}{β₃ᵢ}}\label{subsec:alpha3i-comp}
We recall from Theorem~\ref{thm:chow-M2bar} the presentation:
\begin{equation}\label{eq:mb2-pres-repeated}
  \CH^*(\Mb_2)\cong
  \Z[\lambda_1, \lambda_2, \delta_1 ]/
  (
  24\lambda_1^2 - 48\lambda_2,
  20\lambda_1\lambda_2 - 4\delta_1\lambda_2,
  \delta_1^3 + \delta_1^2 \lambda_1,
  2\delta_1^2 + 2\delta_1 \lambda_1
  ).
\end{equation}
Referring to \eqref{eq:ch-of-delta1} in \S\ref{subsec:d1-pres}, we find that
\[
  \ker\pbig{\CH^*(\Mb_2) \to \CH^*(\Delta_1)} =
  \pbig{
    2(\delta_1+\lambda_1),
    \delta_1(\delta_1+\lambda_1)
  }
  \subset
  \CH^*(\Mb_2).
\]
By Proposition 4.1, we have
\[
  \begin{split}
    &\ker \pBig{\CH^*(\Mb_2) \to \CH^*\pbig{B \sm \pi\I(\Delta_{000})}}
    \\&=
    \pbig{
      48 \lambda_2,
      24 \lambda_2 + 6 \delta_1^2,
      24 \lambda_2 \delta_1,
      4 \lambda_1 \lambda_2 + 4 \lambda_2 \delta_1,
      8 \lambda_1^2 - 24 \lambda_2 - 2 \delta_1^2,
      16 \lambda_2 \delta_1 + 2 \delta_1^3
    }
    \subset \CH^*(\Mb_2).
  \end{split}
\]
Considerations in \S\ref{subsec:chow-of-m2-minus-delta01} show the kernel of \( \CH^*(\Mb_2) \to \CH^*(\Mb_2 \sm \Delta_1) \) is \( (\delta_1) \subset \CH^*(\Mb_2) \).

By Lemma~\ref{lem:beta_ri-setup}, and since each \( \alpha_{3i} = 0 \), all the \( \beta_{3i} \) must lie in the intersection of these three ideals, which is given by
\[
  (
  24 \lambda_2 \delta_1
  )
  \subset \CH^*(\Mb_2).
\]
This ideal has no non-zero elements in degrees 4 or 6, and is generated by
\(
24 \lambda_2^2 \delta_1
\)
in degree 5.
We conclude that \( \beta_{31} = \beta_{33} = 0 \), and that
\[
  \beta_{30} = 24 x \lambda_2 \delta_1
  \quad \text{and} \quad
  \beta_{32} = 24 y \lambda_2^2 \delta_1
\]
for some \( x,y \in \Z \).
Since \( \beta_{30} = 96 \lambda_2 \delta_1 \) and \( \lambda_2 \delta_1 \ne 0 \) in \( \CH^*(\Mb_2) \otimes \Q \) by \cite[\S10, p.~321]{mumford1983towards}, it follows that \( x = 4 \).
Explicit calculations of intersection products in $\CH^*(\Mb_2) \otimes \Q$ then show \( x = 4 \) \cite[\S10, p.~321]{mumford1983towards}.
We may moreover take \( y \in \set{0,1} \) since \( 48 \lambda_2^2 \delta_1 = 0 \in \CH^*(\Mb_2) \).

\subsection{\boldmath Computations of the \texorpdfstring{$\beta_{2i}$}{β₂ᵢ}}\label{subsec:alpha2i-comp}
We work in the ring \( \CH^*\pbig{\Mb_2 \sm (\Delta_{000} \cup \Delta_{001})} \), which by Proposition~\ref{propn:delta-summary} and the above computation of the \( \beta_{3i} \) is given by
\begin{alignat}{2} \label{eq:m2-d000-d001}
  \CH^*\pbig{\Mb_2 \sm (\Delta_{000} \cup \Delta_{001})} &\cong
  \mathrlap{\CH^*(\Mb_2 \sm \Delta_{001}) / (\beta_{30}, \beta_{31}, \beta_{32}, \beta_{33})}
  \\ & \cong
  \Z[\lambda_1, \lambda_2, \delta_1 ]/
  (&&
  24\lambda_1^2 - 48\lambda_2,
  20\lambda_1\lambda_2 - 4\delta_1\lambda_2,
  \\ &&& 
  \delta_1^2(\delta_1+\lambda_1),
  2\delta_1(\delta_1 + \lambda_1),
  48 \delta_1 \lambda_2,
  \beta_{32}
  ).
\end{alignat}

Referring to Proposition~\ref{propn:delta-summary}, we find that
\[
  \begin{split}
    &\ker(\CH^*\pbig{\Mb_2 \sm (\Delta_{000} \cup \Delta_{001})}
    \to \CH^*(\Delta_1 \sm \Delta_{001}))=\\
    &=\pbig{
      2(\delta_1+\lambda_1),
      \delta_1(\delta_1+\lambda_1),
      24\lambda_1\lambda_2,
      144\lambda_2
    }
    \subset
    \CH^*\pbig{\Mb_2 \sm (\Delta_{000} \cup \Delta_{001})}.
  \end{split}
\]
By Proposition~\ref{propn:biell-summary}, we have
\begin{align*}
    &\ker \pBig{
    \CH^*\pbig{\Mb_2 \sm (\Delta_{000} \cup \Delta_{001})} \to \CH^*\pbig{B \sm \pi\I(\Delta_{00})}
    }
    \\
    &=
    (
    24 \lambda_2,
    4 \lambda_1 - 2 \delta_1,
    6 \delta_1^2,
    6 \lambda_2 \delta_1,
    2 \lambda_2 \delta_1 - 2 \delta_1^3,
    )
    \subset
    \CH^*\pbig{\Mb_2 \sm (\Delta_{000} \cup \Delta_{001})}.
\end{align*}
The intersection of these two ideals is
\begin{equation}\label{eq:beta_2i_ideal_intersection}
  \pbig{
    4 \lambda_1 (\delta_1 + \lambda_1),
    4 \lambda_2 (\delta_1 + \lambda_1),
    144 \lambda_2
  } \subset
  \CH^*\pbig{\Mb_2 \sm (\Delta_{000} \cup \Delta_{001})}.
\end{equation}
The kernel of \( \CH^*\pbig{\Mb_2 \sm (\Delta_{000} \cup \Delta_{001})} \to \CH^*(\Delta_1 \sm \Delta_{001}) \) is again the principal ideal \( (\delta_1) \).
The intersection of this with \eqref{eq:beta_2i_ideal_intersection} is
\[
  (
  24 \lambda_2 \delta_1,
  72 \delta_1^2
  )
  \subset \CH^*\pbig{\Mb_2 \sm (\Delta_{000} \cup \Delta_{001})}.
\]
By Lemma~\ref{lem:beta_ri-setup} and since \( \alpha_{21} = \alpha_{22} = 0 \), the classes \( \beta_{21} \) and \( \beta_{22} \) must lie in the above intersection.
This intersection is trivial in degree \( 4 \), and is generated by
\(
24 \lambda_2 \delta_1
\)
in degree 3.
We conclude that \( \beta_{22} = 0 \) and that \( \beta_{21} = 24 x \lambda_2 \delta_1 \) for some \( x \in \Z \).  We can take \( x \in \set{0,1} \) since \( 48 \lambda_2 \delta_1 = 0 \in \CH^*\pbig{\Mb_2 \sm (\Delta_{000} \cup \Delta_{001})} \).

As for \( \beta_{20} \), since it lies in \eqref{eq:beta_2i_ideal_intersection} and has degree 2, it must be equal to
\( 4 y \lambda_1 (\delta_1 + \lambda_1) + 144 z \lambda_2 \)
for some \( y,z \in \Z \).
Since \( \beta_{20} = -12 \delta_1 \lambda_1 + 60 \lambda_1^2 \) rationally by \cite[\S10, p.~321]{mumford1983towards}, it follows that \( y = -3 \) and \( z = 1 \).

\subsection{\boldmath Computation of \texorpdfstring{$\beta_{11}$}{β₁₁}}\label{subsec:alpha1i-comp}
We work in the ring
\begin{alignat}{2} \label{eq:m2-d00-d01}
  \CH^*\pbig{\Mb_2 \sm (\Delta_{00} \cup \Delta_{01})} & \cong
  \mathrlap{\CH^*(\Mb_2 \sm \Delta_{01}) / (\beta_{30}, \beta_{31}, \beta_{32},\beta_{33},\beta_{20}, \beta_{21}, \beta_{22})}
  \\ & \cong
  \Z[\lambda_1, \lambda_2, \delta_1 ]/
  (&&
  24\lambda_1^2 - 48\lambda_2,
  20\lambda_1\lambda_2 - 4\delta_1\lambda_2,
  \delta_1^3 + \delta_1^2 \lambda_1,
  \\ &&& 
  2\delta_1^2 + 2\delta_1 \lambda_1,
  12\delta_1\lambda_1,
  24\delta_1\lambda_2,
  60 \lambda_1^2 - 12 \delta_1 \lambda_1
  )
\end{alignat}

Referring to Proposition~\ref{propn:delta-summary}, we have
\[
\begin{split}
  &\ker(\CH^*(\Mb_2 \sm (\Delta_{00} \cup \Delta_{01})) \to
  \CH^*(\Delta_1 \sm \Delta_{01}))\\
  & =
  \pbig{
    2(\delta_1+\lambda_1),
    \delta_1(\delta_1+\lambda_1),
    12\delta_1,
    24\lambda_2
  }
  \subset
  \CH^*(\Mb_2 \sm \Delta_{01}).
\end{split}
\]
Thus, the general form of \( \beta_{11} \) is
\[
  \beta_{11}=
  (2a\lambda_1+b\delta_1)(\delta_1+\lambda_1)+
  24c\lambda_2,
\]
where we can assume $b\in\set{0,-1}$.

Using $\beta_{11} \mapsto \alpha_{11} \in \CH^*\pbig{\Mb_2 \sm (\Delta_{00} \cup \Delta_1)}$, we obtain
\[
  2a\lambda_1^2+24c\lambda_2 =
  2\lambda_1^2-24\lambda_2\in\CH^*(\Mb_2 \sm (\Delta_{00} \cup \Delta_{01}))/(\delta_1)
\]
and hence $a=1+12x$ and $c=-1-2x$ for some $x\in\Z$.
As usual, since $24\lambda_1^2-48\lambda_2=0\in\CH^*(\Mb_2 \sm \Delta_{01})$, we may take $x=0$.

Thus, we have
\[
  \beta_{11}=
  2\lambda_1^2-24\lambda_2+
  (2+b)\delta_1\lambda_1+
  b\delta_1^2.
\]
Unfortunately, the condition $\beta_{11}\mapsto 0\in\CH^*(B \sm \pi\I(\Delta_0))$ does not help in determining \( b \).
Instead, we show directly that \(\beta_{11}\) vanishes in \( \M_2^\ct \) for \(b = -1\).
This implies that \( b = -1 \), as we will now show.
Indeed, if \( b = 0 \), it would follow that the value of \(\beta_{11}\) for \(b = -1\) lies in the ideal of $\CH^*\pbig{\Mb_2 \sm (\Delta_{00} \cup \Delta_{01})}$ generated by \( \beta_{10} = 10\lambda_1 - 2\delta_1 \) and the value of \( \beta_{11} \) for \( b = 0 \), which one can easily check is not the case.

To show directly that \( \beta_{11} \) vanishes in \( \M_2^\ct \) when \(b = -1\), we may first suppose the characteristic is zero.
Indeed, since $\M_2^\ct$ is smooth over $\operatorname{Spec}\Z$, we have by \cite[Corollary~20.3]{fulton-intersection-theory} a specialization map: a ring homomorphism $\CH^*\pbig{(\M_2^\ct)_{\Q}} \to \CH^*\pbig{(\M_2^\ct)_{\F_p}}$ from the Chow ring of the general fibre to that of a special fibre ($p\ne2,3$).

Consider the universal curve $\pi \colon \cC \to \M_2^\ct$ and the universal separating node $\cS_1\subset\cC$.
In characteristic zero, \cite[\S6.2]{larson-chow-m2} shows that $\pi_*\pbig{\ch_1(\omega_\pi)\cdot[\cS_1]}=12\lambda_1^2-24\lambda_2$.
\begin{lem}
    The line bundle $\omega_\pi|_{\cS_1}$ is isomorphic to the pullback of the representation $\Gamma$ from $\B G$.
\end{lem}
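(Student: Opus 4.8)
The plan is to identify the line bundle $\omega_\pi|_{\cS_1}$ directly as a one-dimensional $G$-representation, using the quotient presentation $\Delta_1 \cong \pbig{(L_{4,6}\sm 0)^2}/G$ from \eqref{eq:delta1-pres}. Every curve parametrized by $\Delta_1$ has a \emph{unique} separating node, so the universal separating node $\cS_1$ is canonically identified with $\Delta_1 \cap \M_2^\ct$, an open substack of $\Delta_1$; thus $\omega_\pi|_{\cS_1}$ is a $G$-equivariant line bundle on $(L_{4,6}\sm 0)^2$, and its formation commutes with the inclusion of this open substack. Over a geometric point the curve is $E_1 \cup_q E_2$, glued at the marked points $O_1, O_2$ of the two Weierstrass elliptic curves, with $q$ the separating node; the subgroup $\Gm \times \Gm \subset G$ acts by Weierstrass scaling on the two factors, while the $\Z/2\Z$ in $G$ swaps $E_1$ and $E_2$ and hence exchanges the two branches of $C$ at $q$. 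The goal is to show that this equivariant line bundle is the character of $G$ given by the sign of the $\Z/2\Z$-component, which is exactly the pullback of $\Gamma$ along $\cS_1 \cong \Delta_1 \to \B G$.

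First I would pin down the fibre $\omega_\pi|_q$ via the normalization $\nu\colon \tilde C = E_1 \sqcup E_2 \to C$, writing $p_1 \in E_1$ and $p_2 \in E_2$ for the preimages of $q$. The standard identification $\nu^*\omega_\pi \cong \omega_{\tilde\pi}(p_1 + p_2)$ gives $\omega_\pi|_q \cong \omega_{E_i}(p_i)|_{p_i}$ for either $i$. Because the node is separating, $\tilde C$ is disconnected and $p_2$ does not lie on the component $E_1$, so $\omega_{E_1}(p_1)|_{p_1} \cong T^*_{p_1}E_1 \otimes T_{p_1}E_1$ is canonically trivialized by the Poincaré residue $\operatorname{res}_{p_1}$. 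Since the residue is intrinsic, it is equivariant for every automorphism fixing the branch through $p_1$; as $\Gm \times \Gm$ fixes $O_1$ and $O_2$ and preserves each branch, it acts trivially on $\omega_\pi|_{\cS_1}$. (Equivalently, the weights cancel: the local coordinate $u = x/y$ at the point at infinity gives $T^*_O E = L_1$ and $T_O E = L_{-1}$, so their tensor product has weight $0$; the weight $+1$ of $T^*_O E$ is the same one recorded in the identification of the Hodge bundle on $\Delta_1$ with $W_1$ in \S\ref{subsec:quotient-presentations}.)

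The decisive point is the action of the swap. The element of $G$ exchanging $E_1$ and $E_2$ interchanges $p_1$ and $p_2$, and therefore interchanges the two residue trivializations $\operatorname{res}_{p_1}$ and $\operatorname{res}_{p_2}$ of $\omega_\pi|_q$. A local generator of $\omega_\pi$ near $q$ restricts to the two branches as $\tfrac{du_1}{u_1}$ and $-\tfrac{du_2}{u_2}$, so that the defining residue condition is $\operatorname{res}_{p_1} + \operatorname{res}_{p_2} = 0$; hence these two trivializations differ precisely by a sign, and the swap acts on $\omega_\pi|_q$ by $-1$. Combined with the triviality of the $\Gm \times \Gm$-action, this exhibits $\omega_\pi|_{\cS_1}$ as the one-dimensional $G$-representation on which $\Gm \times \Gm$ acts trivially and $\Z/2\Z$ acts by the sign, i.e.\ as the pullback of $\Gamma$ from $\B G$.

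The main obstacle is making the sign computation rigorous and coordinate-free in the family rather than at a single geometric fibre. I would phrase it through the conductor/residue exact sequence $0 \to \omega_\pi \to \nu_*\omega_{\tilde\pi}(p_1+p_2) \to \oo_{\cS_1} \to 0$, which produces the two residue maps globally along $\cS_1$ and lets one track how the $\Z/2\Z$-factor of $G$ permutes them and flips their relative sign. Everything else—the triviality of the $\Gm \times \Gm$-weights—is a formal consequence of the residue being intrinsic, and can be cross-checked against the explicit cotangent weight $T^*_O E = L_1$ used above.
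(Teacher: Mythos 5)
Your proposal is correct and follows essentially the same route as the paper's proof: both use the quotient presentation $\Delta_1 \cong \pbig{(L_{4,6}\sm 0)^2}/G$, trivialize $\omega_\pi|_{\cS_1}$ via the residue along one chosen branch (equivariantly for $\Gm\times\Gm$, whose weights cancel), and then observe that the residue on the other branch differs by a sign, so the $\Z/2\Z$-swap acts by $-1$, identifying the bundle with the pullback of $\Gamma$. Your write-up simply makes explicit (via the normalization sequence and the weight computation) what the paper's two-sentence proof leaves implicit.
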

\begin{proof}
    Recall the presentation $\Delta_{1} \cong \pbig{(L_{4,6} \sm 0)^2}/G$
  from \S\ref{subsec:quotient-presentations}.  Pulling back to the double cover $\pbig{(L_{4,6} \sm 0)^2}/(\Gm \times \Gm)$, the residue map along a chosen factor $(L_{4,6} \sm 0)/ \Gm \cong \Mb_{1,1}$ gives a trivialization of $\omega_\pi|_{\cS_1}$.  As the residue on the other factor differs by a sign, we moreover obtain a $G$-equivariant isomorphism of $\omega_\pi|_{\cS_1}$ with the pullback of $\Gamma$ from $\B G$.
\end{proof}
Thus, $\pi_*\pbig{\ch_1(\omega_\pi)\cdot[\cS_1]}=12\lambda_1^2-24\lambda_2$ is the pushforward from \(\Delta_1 \sm \Delta_{01}\) of \(\gamma\), which is \(\delta_1^2 + \delta_1\lambda_1\).
We therefore conclude
\[
  12\lambda_1^2-24\lambda_2
  =
  \delta_1^2+\delta_1\lambda_1
  \qquad
  \text{and hence}
  \qquad
  2\lambda_1^2-24\lambda_2
  +\delta_1\lambda_1
  -\delta_1^2
  =0
\]
in $\CH^*(\M_2^\ct)$, as required, since $10\lambda_1^2 - 2\delta_1\lambda_1 = \lambda_1\beta_{10} = 0\in\CH^*(\M_2^\ct)$.

\subsection{Conclusion}\label{subsec:conclusion}
We now put together Theorem~\ref{thm:chow-M2bar}, Lemma~\ref{lem:beta_ri-setup} and Proposition~\ref{propn:betaij-values} to compute the Chow rings of \( \Mb_2 \sm \Delta_{000} \), \( \Mb_2 \sm \Delta_{00} \), and \( \M_2^\ct \).
We also recall from Proposition~\ref{propn:delta-summary} and \eqref{eq:m2-d000-d001},~\eqref{eq:m2-d00-d01} the Chow rings of \( \Mb_2 \sm \Delta_{001} \), \( \Mb_2 \sm \Delta_{01} \), \( \Mb_2 \sm (\Delta_{000} \cup \Delta_{001}) \), and \( \Mb_2 \sm (\Delta_{00} \cup \Delta_{01}) \), and using Lemma~\ref{lem:beta_ri-setup} and Proposition~\ref{propn:betaij-values} again, we further compute those of \( \Mb_2 \sm (\Delta_{000} \cup \Delta_{01}) \), \( \Mb_2 \sm (\Delta_{000} \cup \Delta_{1}) \), and \( \Mb_2 \sm (\Delta_{00} \cup \Delta_{1}) \).
Finally, for completeness, we also recall the Chow ring of \( \CH^*(\M_2) \) from \cite{vistoli-chow-ring-of-m2}.

We note that the only Chow rings that are not determined completely are those of \( \Mb_2 \sm \Delta_{000} \), \( \Mb_2 \sm \Delta_{00} \), and \( \Mb_2 \sm (\Delta_{000} \cup \Delta_{001}) \), and these are each determined up to two or three possibilities.

\begin{thm}\label{thm:main-thm}
  Over any field of characteristic \( \ne 2,3 \), we have, {\color{red!60!black} for some \( w_{21},w_{32} \in \set{0,1} \)}:
  \[
    \begin{split}
      \CH^*\pbig{\Mb_2} &\cong
      \Z[\lambda_1, \lambda_2,\delta_1] / ({
        24\lambda_1^2 - 48\lambda_2,
        20\lambda_1\lambda_2 - 4\delta_1\lambda_2,
        \delta_1^3 + \delta_1^2 \lambda_1,
        2\delta_1^2 + 2\delta_1 \lambda_1
      })
      \\
      {\color{red!60!black} \CH^*(\Mb_2 \sm \Delta_{000})}
      & \cong
      \CH^*(\Mb_2) / ({
        96 \delta_1 \lambda_2,
        24 w_{32} \delta_1 \lambda_2^2
      })
      \\
      & \cong
      \Z[\lambda_1, \lambda_2, \delta_1] / ({
        24 \lambda_1^2 - 48 \lambda_2,
        20 \lambda_1 \lambda_2 - 4 \delta_1 \lambda_2,
        \delta_1^3 + \delta_1^2 \lambda_1,
        2 \delta_1^2 + 2 \delta_1 \lambda_1,
      } \\ & \hspace{7em} { 
        96 \delta_1 \lambda_2,
        24 w_{32} \delta_1 \lambda_2^2
      }),
      \\
      {\color{red!60!black} \CH^*(\Mb_2 \sm \Delta_{00})}
      & \cong
      \CH^*\pbig{\Mb_2 \sm (\Delta_{000} \cup \Delta_{001})} / ({
        60 \lambda_1^2 - 12 \delta_1 \lambda_1,
        24 w_{21} \delta_1 \lambda_2
      })
      \\
      & \cong
      \Z[\lambda_1, \lambda_2, \delta_1] / ({
        24 \lambda_1^2 - 48 \lambda_2,
        20 \lambda_1 \lambda_2 - 4 \delta_1 \lambda_2,
        \delta_1^2 (\delta_1 + \lambda_1),
        2 \delta_1 (\delta_1 + \lambda_1),
      } \\ & \hspace{7em} { 
        48 \delta_1 \lambda_2,
        24 w_{32} \delta_1 \lambda_2^2,
        60 \lambda_1^2 - 12 \delta_1 \lambda_1,
        24 w_{21} \delta_1 \lambda_2
      }),
      \\
      \CH^*(\M_2^\ct) &\cong
      \CH^*\pbig{\Mb_2 \sm (\Delta_{00} \cup \Delta_{01})} / ({
        10 \lambda_1 - 2 \delta_1,
        2 \lambda_1^2 - 24 \lambda_2 + \delta_1 \lambda_1 - \delta_1^2
      })
      \\
      & \cong
      \Z[\lambda_1, \lambda_2, \delta_1] / ({
        12 \delta_1 \lambda_1,
        24 \delta_1 \lambda_2,
        10 \lambda_1 - 2 \delta_1,
        2 \lambda_1^2 - 24 \lambda_2 + \delta_1 \lambda_1 - \delta_1^2
      })
      \\
      \CH^*(\Mb_2 \sm \Delta_{001}) &\cong
      \Z[\lambda_1, \lambda_2, \delta_1 ] / ({
        24 \lambda_1^2 - 48 \lambda_2,
        20 \lambda_1 \lambda_2 - 4 \delta_1 \lambda_2,
        \delta_1^3 + \delta_1^2 \lambda_1,
      } \\ & \hspace{7em} { 
        2 \delta_1^2 + 2 \delta_1 \lambda_1,
        144 \delta_1 \lambda_2
      })
      \\
      {\color{red!60!black} \CH^*\pbig{\Mb_2 \sm (\Delta_{000} \cup \Delta_{001})}}
      &\cong
      \Z[\lambda_1, \lambda_2, \delta_1 ] / ({
        24 \lambda_1^2 - 48 \lambda_2,
        20 \lambda_1 \lambda_2 - 4 \delta_1 \lambda_2,
      } \\ & \hspace{7em} { 
        \delta_1^2 (\delta_1 + \lambda_1),
        2 \delta_1 (\delta_1 + \lambda_1),
        48 \delta_1 \lambda_2,
        24 w_{32} \delta_1 \lambda_2^2
      })
      \\
      \CH^*(\Mb_2 \sm \Delta_{01}) &\cong
      \Z[\lambda_1, \lambda_2, \delta_1 ] / ({
        24 \lambda_1^2 - 48 \lambda_2,
        20 \lambda_1 \lambda_2 - 4 \delta_1 \lambda_2,
        \delta_1^3 + \delta_1^2 \lambda_1,
      } \\ & \hspace{7em} { 
        2 \delta_1^2 + 2 \delta_1 \lambda_1,
        12 \delta_1 \lambda_1,
        24 \delta_1 \lambda_2
      })
      \\
      \CH^*\pbig{\Mb_2 \sm (\Delta_{000} \cup \Delta_{01})} &\cong
      \CH^*(\Mb_2 \sm \Delta_{01}) / (\beta_{30}, \beta_{31}, \beta_{32}, \beta_{33}) \cong
      \CH^*(\Mb_2 \sm \Delta_{01})
      \\
      \CH^*\pbig{\Mb_2 \sm (\Delta_{00} \cup \Delta_{01})} &\cong
      \Z[\lambda_1, \lambda_2, \delta_1 ]/
      ({
        24 \lambda_1^2 - 48 \lambda_2,
        20 \lambda_1 \lambda_2 - 4 \delta_1 \lambda_2,
        \delta_1^3 + \delta_1^2 \lambda_1,
      } \\ & \hspace{7em} { 
        2 \delta_1^2 + 2 \delta_1 \lambda_1,
        12 \delta_1 \lambda_1,
        24 \delta_1 \lambda_2,
        60 \lambda_1^2 - 12 \delta_1 \lambda_1
      })
      \\
      \CH^*\pbig{\Mb_2 \sm \Delta_{1}} &\cong
      \CH^*\pbig{\Mb_2} / (\delta_1)
      \\ & \cong
      \Z[\lambda_1,\lambda_2] / ({
        24 \lambda_1^2 - 48 \lambda_2,
        20 \lambda_1 \lambda_2
      })
      \\
      \CH^*\pbig{\Mb_2 \sm (\Delta_{000} \cup \Delta_{1})} &\cong
      \CH^*(\Mb_2 \sm \Delta_{1}) / (\beta_{30}, \beta_{31}, \beta_{32}, \beta_{33},) \cong
      \CH^*(\Mb_2 \sm \Delta_{1})
      \\
      \CH^*\pbig{\Mb_2 \sm (\Delta_{00} \cup \Delta_{1})} &\cong
      \CH^*(\Mb_2 \sm \Delta_{1}) / (\beta_{30}, \beta_{31}, \beta_{32}, \beta_{33}, \beta_{20}, \beta_{21}, \beta_{22})
      \\ & \cong
      \Z[\lambda_1, \lambda_2] / ({
        24 \lambda_1^2 - 48 \lambda_2,
        20 \lambda_1 \lambda_2,
        60 \lambda_1^2
      })
      \\
      \CH^*\pbig{\M_2} &\cong
      \Z[\lambda_1, \lambda_2] / ({
        10 \lambda_1,
        2 \lambda_1^2 - 24 \lambda_2
      }).
    \end{split}
  \]
\end{thm}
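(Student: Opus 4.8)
The plan is to assemble Theorem~\ref{thm:main-thm} directly from the excision sequences already in hand, since all the substantive geometric input has been packaged into Lemma~\ref{lem:vistolis-formulas}, Lemma~\ref{lem:beta_ri-setup}, and Proposition~\ref{propn:betaij-values}. The unifying observation is that every ring listed in the theorem is the third term of an excision sequence whose first map has image an explicitly generated ideal: for the strata still containing \( \Delta_1 \) the generators are the \( \alpha_{ri} \) of Lemma~\ref{lem:vistolis-formulas} via the sequences \eqref{eq:vistolis-sequence}; for the strata with \( \Delta_{01} \) removed they are the \( \bar\beta_{ri} \) via the sequences \eqref{eq:our-sequence}; and for the two remaining strata \( \Mb_2 \sm \Delta_{000} \) and \( \Mb_2 \sm \Delta_{00} \) they are the \( \beta_{ri} \), applied to the direct excisions of the closed substacks \( \Delta_{000} \subset \Mb_2 \) and \( \Delta_{00} \sm (\Delta_{000} \cup \Delta_{001}) \subset \Mb_2 \sm (\Delta_{000} \cup \Delta_{001}) \) (all three cases by Lemma~\ref{lem:beta_ri-setup}). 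Thus each entry of the theorem is the quotient of a previously computed ring by the ideal generated by the relevant classes, and the proof reduces to substituting their values and simplifying.

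First I would fix three chains of excisions together with their starting points. Chain (a) is \( \Mb_2 \sm \Delta_1 \supset \Mb_2 \sm (\Delta_{000} \cup \Delta_1) \supset \Mb_2 \sm (\Delta_{00} \cup \Delta_1) \supset \M_2 \), obtained by successively quotienting by the \( \alpha_{3i} \), \( \alpha_{2i} \), \( \alpha_{1i} \) and starting from \( \CH^*(\Mb_2 \sm \Delta_1) \cong \CH^*(\Mb_2)/(\delta_1) \) (\S\ref{subsec:chow-of-m2-minus-delta01}). Chain (b) is \( \Mb_2 \sm \Delta_{01} \supset \Mb_2 \sm (\Delta_{000} \cup \Delta_{01}) \supset \Mb_2 \sm (\Delta_{00} \cup \Delta_{01}) \supset \M_2^\ct \), obtained by quotienting by the \( \bar\beta_{3i} \), then the \( \bar\beta_{2i} \), and finally \( \beta_{10},\beta_{11} \), starting from \( \CH^*(\Mb_2 \sm \Delta_{01}) \) of Proposition~\ref{propn:delta-summary}. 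Chain (c) is \( \Mb_2 \supset \Mb_2 \sm \Delta_{001} \supset \Mb_2 \sm (\Delta_{000} \cup \Delta_{001}) \), from Theorem~\ref{thm:chow-M2bar}, Proposition~\ref{propn:delta-summary}, and the \( \beta_{3i} \); it is supplemented by the two direct quotients \( \CH^*(\Mb_2 \sm \Delta_{000}) \cong \CH^*(\Mb_2)/(\beta_{3i}) \) and \( \CH^*(\Mb_2 \sm \Delta_{00}) \cong \CH^*\pbig{\Mb_2 \sm (\Delta_{000} \cup \Delta_{001})}/(\beta_{2i}) \).

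The bulk of the work is then the simplification of each quotient using the relations already present in the ring being quotiented, where several pleasant collapses occur. Since \( 24\delta_1\lambda_2 = 0 \) already holds in \( \CH^*(\Mb_2 \sm \Delta_{01}) \), all four \( \bar\beta_{3i} \) vanish there (both \( \beta_{30} = 96\delta_1\lambda_2 \) and \( \beta_{32} = 24 w_{32}\delta_1\lambda_2^2 \), and likewise \( \bar\beta_{21} = 24 w_{21}\delta_1\lambda_2 \), restrict to \( 0 \)); this makes the first excision of chain (b) trivial and is precisely why the ambiguous parameters \( w_{21},w_{32}\in\set{0,1} \) drop out of chain (b), so that \( \M_2^\ct \) is completely determined, whereas in chain (c), where only \( 144\delta_1\lambda_2 \) vanishes, the strata \( \Mb_2 \sm \Delta_{000} \), \( \Mb_2 \sm \Delta_{00} \), and \( \Mb_2 \sm (\Delta_{000}\cup\Delta_{001}) \) retain them. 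In chain (a) the relation \( 24\lambda_1^2 - 48\lambda_2 = 0 \) turns \( \alpha_{20} = -12\lambda_1^2 + 144\lambda_2 \) into \( 60\lambda_1^2 \), and then \( \alpha_{10} = 10\lambda_1 \) makes the remaining relations redundant, recovering Vistoli's presentation of \( \CH^*(\M_2) \). For the final step of chain (b) I would check that, modulo \( \beta_{10} = 10\lambda_1 - 2\delta_1 \) and \( \beta_{11} = 2\lambda_1^2 - 24\lambda_2 + \delta_1\lambda_1 - \delta_1^2 \), each of \( 24\lambda_1^2 - 48\lambda_2 \), \( 20\lambda_1\lambda_2 - 4\delta_1\lambda_2 \), \( \delta_1^3 + \delta_1^2\lambda_1 \), \( 2\delta_1^2 + 2\delta_1\lambda_1 \), and \( 60\lambda_1^2 - 12\delta_1\lambda_1 \) becomes redundant (for instance \( 20\lambda_1\lambda_2 - 4\delta_1\lambda_2 = 2\lambda_2\beta_{10} \) and \( 2\delta_1^2 + 2\delta_1\lambda_1 = 12\delta_1\lambda_1 - \delta_1\beta_{10} \)), leaving exactly the four relations \( 12\delta_1\lambda_1 \), \( 24\delta_1\lambda_2 \), \( \beta_{10} \), \( \beta_{11} \) asserted for \( \M_2^\ct \).

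The main obstacle is precisely this presentation bookkeeping carried out over \( \Z \): for each quotient one must exhibit every discarded relation as an explicit integer combination of the retained generators and conversely confirm that nothing is lost, being careful with \( 2 \)-torsion and divisibility. The degree-one identity \( 2\delta_1 = 10\lambda_1 \) coming from \( \beta_{10} \), and its consequence \( 4\delta_1\lambda_1 = 20\lambda_1^2 \), do much of the work but may not be divided by \( 2 \); because the coefficient arithmetic is over \( \Z \) rather than \( \Z[1/2] \) or \( \Q \), this is where sign and divisibility errors are easiest to make, and I would confirm the ideal memberships with a computer algebra system. Lastly, to reconcile the compact presentation of \( \CH^*(\M_2^\ct) \) stated in the introduction with the one produced here, one verifies the equality of the two ideals by a short commutative-algebra computation over \( \Z \).
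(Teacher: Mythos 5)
Your proposal is correct and follows essentially the same route as the paper's own proof in \S\ref{subsec:conclusion}: both assemble the theorem by feeding the computed classes of Lemma~\ref{lem:vistolis-formulas} and Proposition~\ref{propn:betaij-values} into the excision sequences \eqref{eq:vistolis-sequence}, \eqref{eq:our-sequence} and the direct excisions of \(\Delta_{000}\) and \(\Delta_{00} \sm (\Delta_{000} \cup \Delta_{001})\), using Lemma~\ref{lem:beta_ri-setup} to identify the images, and then simplify the resulting presentations over \(\Z\). Your explicit ideal-membership checks (e.g.\ that the \(\bar\beta_{3i}\) and \(\bar\beta_{21}\) die in \(\CH^*(\Mb_2 \sm \Delta_{01})\), and the reductions for \(\CH^*(\M_2^\ct)\) and \(\CH^*(\M_2)\)) are accurate and correspond to the simplifications the paper performs implicitly.
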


\printbibliography
\end{document}